\newtheorem{theorem}{Theorem}[section]
\newtheorem{proposition}[theorem]{Proposition}
\newtheorem{lemma}[theorem]{Lemma}
\newtheorem{question}[theorem]{Question}
\theoremstyle{definition}
\newtheorem{remark}[theorem]{Remark}
\newcommand{\Hol}{\mathrm{Hol}}
\newcommand{\Aut}{\mathrm{Aut}}
\providecommand{\keywords}[1]{\textbf{Keywords:} #1}
\numberwithin{equation}{section}
\title{Hopf--Galois structures of cyclic type on \\parallel extensions of prime power degree}
\author{Andrew Darlington
\and
Cindy (Sin Yi) Tsang}
\newcommand{\Addresses}{{% additional braces for segregating \footnotesize
\bigskip
\noindent A. Darlington, \textsc{Department of Mathematics and Data Science, Vrije Universiteit Brussel, Pleinlaan 2, 1050, Brussels, Belgium}\par\nopagebreak
\noindent\textit{Email} \texttt{andrew.darlington@vub.be}\\
\noindent\textit{Homepage} \url{https://sites.google.com/view/andrewdarlington/}

\medskip

\noindent C. Tsang, \textsc{Department of Mathematics, Ochanomizu University, 2-1-1 Otsuka, Bunkyo-ku, Tokyo,
Japan}\par\nopagebreak
\noindent\textit{Email}  \texttt{tsang.sin.yi@ocha.ac.jp}\\
\noindent\textit{Homepage} \url{https://sites.google.com/site/cindysinyitsang/}
}}
\begin{document}

\date{}
\maketitle

\begin{abstract} Let $L/K$ be any finite separable extension with normal closure $\widetilde{L}/K$. An extension $L'/K$ is said to be \textit{parallel to $L/K$} if $L'$ is an intermediate field of $\widetilde{L}/K$ with $[L':K]=[L:K]$. We study the following question --- Given that $L/K$ admits a Hopf--Galois structure of type $N$, does it imply that every extension parallel to $L/K$ also admits a Hopf--Galois structure of type $N$? We completely solve this problem when the degree $[L:K]$ is a prime power and the type $N$ is cyclic. Our approach is group-theoretic and uses the work of Greither--Pareigis and Byott.
\end{abstract}

\medskip

\keywords{Hopf--Galois structures, cyclic type, parallel extensions,\\ \mbox{\hspace{2.45cm}} holomorph, regular subgroups, transitive subgroups}

\medskip

%\tableofcontents

\section{Introduction}

Hopf--Galois structures were first described by Chase and Sweedler in \cite{CS}. The original motivation was to study purely inseparable extensions, but it was soon realised that this approach was not fruitful. Nevertheless, the theory also applies to separable extensions, in which case the Hopf--Galois structures admit a group-theoretic classification, thanks to the work of Greither and Pareigis \cite{GP}. Below, let us explain this in more detail (also see \cite{book1}).

\medskip

Let $L/K$ be a finite separable extension with normal closure $\widetilde{L}/K$. We have the Galois groups $G=\mathrm{Gal}(\widetilde{L}/K)$ and $G'=\mathrm{Gal}(\widetilde{L}/L)$. The result of \cite{GP} states that the Hopf--Galois structures on $L/K$ (up to isomorphism) are in one-to-one correspondence with the \textit{regular} subgroups $N$ of $\mathrm{Sym}(G/G')$, i.e. the transitive subgroups with trivial stabilisers, that are normalised by the subgroup $\lambda(G)$ of left translations, where
%groups with trivial stabiliser, that are normalised by $\lambda(G)$. Here $\lambda(G)$ denotes the subgroup of left translations, where
\[ \lambda : G \rightarrow \mathrm{Sym}(G/G');\quad \lambda(g) = (hG' \mapsto ghG').\]
More specifically, the Hopf--Galois structure on $L/K$ associated to $N$ is defined to be the sub-Hopf algebra $(\widetilde{L}[N])^G$ of $\widetilde{L}[N]$ over $K$ consisting of the elements that are fixed by the action of $G$, where $G$ acts on $\widetilde{L}$ via the Galois group and on $N$ via conjugation by $\lambda(G)$. The action of $(\widetilde{L}[N])^G$ on $L$ is given by
\[ \left(\sum_{\sigma\in N} \ell_\sigma\sigma \right)\cdot x = \sum_{\sigma\in N}\ell_\sigma g_\sigma(x)\quad \left(\forall \sigma \in N : \sigma(g_\sigma G') = G'\right)\]
for all $x\in L$. The group $N$ or its isomorphism class is referred to as the \textit{type} of the associated Hopf--Galois structure. Note that 
\[|N| = [G:G']=[L:K]\]
holds. The symmetric group $\mathrm{Sym}(G/G')$ is large and could be difficult to work with. By fixing the type $N$ in advance and by reversing the roles of $G$ and $N$, Byott \cite{Byott} reformulated this correspondence in terms of the holomorph 
\[ \Hol(N) = N\rtimes \Aut(N)\]
of $N$, which is much smaller than $\mathrm{Sym}(G/G')$. One consequence of his result is that the following statements are equivalent:
\begin{enumerate}[(1)]
\item The extension $L/K$ admits a Hopf--Galois structure of type $N$.
\item The group $G$ is isomorphic to a transitive subgroup $T$ of $\Hol(N)$ under an isomorphism that takes $G'$ to the stabiliser $\mathrm{Stab}_T(1_N)$.
%There is an injective homomorphism $\beta : G\rightarrow \mathrm{Hol}(N)$ such that $\beta(G)$ is a transitive subgroup and $\beta(G') = \mathrm{Stab}_{\beta(G)}(1_N)$.
\end{enumerate}
This is the point of view that we shall take in this paper.

\medskip

With the same set-up as above, an extension $L'/K$ is said to be \textit{parallel to $L/K$} if $L'$ is an intermediate field of $\widetilde{L}/K$ with $[L':K]=[L:K]$. The notion of ``parallel" is not symmetric because $L$ need not be contained in the normal closure of $L'/K$. Also clearly $L/K$ has no parallel extension except itself when it is normal. In \cite{Darlington}, the first-named author initiated the study of comparing the Hopf--Galois structures on $L/K$ and those on a parallel extension $L'/K$. More precisely, in \cite[Section 4]{Darlington}, he considered the following problem:

\begin{question}\label{Q:HGS0} If $L/K$ admits a Hopf--Galois structure, does it imply that its parallel extensions $L'/K$ all admit a Hopf--Galois structure?
\end{question}

Although counterexamples exist, computation by \textsc{Magma} \cite{magma} suggests that the answer to Question \ref{Q:HGS0} is often affirmative (see \cite[Table 5]{Darlington}), and is always affirmative when $[L:K]$ is squarefree (see \cite[Conjecture 4.2]{Darlington}). The squarefree degree case is somewhat tractable because there is a classification of groups of squarefree order by \cite{MurtyMurty}, but the general case can be extremely difficult.

\medskip

In this paper, we shall refine Question \ref{Q:HGS0} by fixing the type $N$ in advance. We ask the following question, which seems much more approachable.

\begin{question}\label{Q:HGS} If $L/K$ admits a Hopf--Galois structure of type $N$, does it imply that its parallel extensions $L'/K$ all admit a Hopf--Galois structure of type $N$?
\end{question}

Following \cite{Darlington}, we approach Question \ref{Q:HGS} group-theoretically, as follows. The hypothesis that $L/K$ admits a Hopf--Galois structure of type $N$ means that we may identify $G$ as a transitive subgroup of $\Hol(N)$ and $G' = \mathrm{Stab}_G(1_N)$. Now, the extensions parallel to $L/K$ are exactly the fixed fields of the subgroups $H$ of $G$ of index $[L:K]$. For each subgroup $H$ of $G$, the normal closure of $L^H/K$ is the fixed field of the core $C=\mathrm{Core}_G(H)$ of $H$ in $G$, i.e. the largest normal subgroup of $G$ contained in $H$. Let us summarise the set-up in a diagram:

{\small\begin{equation}\label{diagram} \begin{tikzpicture}[baseline={([yshift=-.8ex]current bounding box.center)},scale=1]
    \node at (0,0) [name=K] {$K$};
    \node at (0,2) [name=L] {$L$};
    \node at (0,5) [name=E] {$\widetilde{L}$};
    \node at (4,2) [name=H] {$L^H$};
    \node at (4,4) [name=C] {$L^C$};
    \node at (4,0) [name=K'] {$K$};
    \draw[-] (L) -- (E) (K') -- (H) -- (C) -- (E); 
    \draw[-] (K) to node[right] {$[L:K]$} (L);
    \draw (K) to[bend left=35] node[left] {$G$} (E);
    \draw (L) to[bend right=35] node[right] {$G'$} (E);
    \draw (K') to[bend left=35] node[left] {$G/C$} (C);
    \draw (H) to[bend right=35] node[right] {$H/C$} (C);
    \draw[-] (K') to node[right] {$[L:K]$} (H);
\end{tikzpicture}\end{equation}\vspace{2mm}}

\noindent We then see that $L^H/K$ admits a Hopf--Galois structure of type $N$ if and only if $G/C$ is isomorphic to a transitive subgroup of $\Hol(N)$ under an isomorphism that takes $H/C$ to the stabiliser of $1_N$. 
It follows that Question \ref{Q:HGS} reduces to:

\begin{question}\label{Q:HSG'} Let $G$ be a transitive subgroup of $\Hol(N)$ with $G'=\mathrm{Stab}_G(1_N)$. For any  subgroup $H$ of $G$ of index $|N|$ with $C=\mathrm{Core}_G(H)$, is $G/C$ isomorphic to a transitive subgroup $T$ of $\Hol(N)$ under an isomorphism that maps $H/C$ to the stabiliser $\mathrm{Stab}_T(1_N)$?
\end{question}

In the case that $H = g^{-1}G'g$ is conjugate to $G'$ with $g\in G$, or equivalently $L^H$ is conjugate to $L$ in the set-up \eqref{diagram}, we have $C=1$ and conjugation by $g$ is an isomorphism from $G$ to itself that sends $H$ to $G'$. Hence, if $L/K$ admits a Hopf--Galois structure of type $N$, then so do the extensions that are conjugate to $L/K$. The same holds for the $H$ that lie in the same $\Aut(G)$-orbit as $G'$.
%but nothing can be said about the other parallel extensions.

\medskip

The purpose of this paper is to study Hopf--Galois structures of cyclic type on parallel extensions of prime power degree. As in many situations, the cases of odd and even prime powers behave very differently. Our main results are:

\begin{theorem}\label{thm:odd}
    Let $L/K$ be any finite separable extension of odd prime power degree admitting a Hopf--Galois structure of cyclic type. For any extension $L'/K$ parallel to $L/K$, the following are equivalent:
    \begin{enumerate}[$(1)$]
    \item $L'/K$ admits a Hopf--Galois structure of cyclic type.
    \item $L'/K$ is conjugate to $L/K$.
    \end{enumerate}
\end{theorem}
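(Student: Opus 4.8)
The plan is to argue entirely on the group-theoretic side, via Question~\ref{Q:HSG'}. Write $N=C_{p^n}$ with $p$ odd; by the reduction recorded in the Introduction, $G$ is a transitive subgroup of $\Hol(N)$ with $G'=\mathrm{Stab}_G(1_N)$, the parallel extensions correspond to the subgroups $H\le G$ of index $p^n$, and $L^H/K$ admits a Hopf--Galois structure of cyclic type if and only if $\bar G:=G/C$ is isomorphic to a transitive subgroup of $\Hol(N)$ whose point stabiliser is $\bar H:=H/C$, where $C=\mathrm{Core}_G(H)$. The implication $(2)\Rightarrow(1)$ is already observed in the Introduction: if $H=gG'g^{-1}$ then $C=1$ and conjugation by $g$ carries the transitive pair $(G,G')$ to $(G,H)$, so $L^H/K$ inherits a cyclic Hopf--Galois structure. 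Hence the whole content is $(1)\Rightarrow(2)$, namely: if $\bar G$ is a transitive subgroup of $\Hol(N)$ with point stabiliser $\bar H$, then $H$ is conjugate to $G'$ in $G$.

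The structural backbone, and the only place where oddness of $p$ enters, is that $\Aut(C_{p^n})\cong C_{p^{n-1}(p-1)}$ is \emph{cyclic}. I would first record the consequence that every transitive subgroup $S\le\Hol(N)$ is metacyclic of a very restricted shape: $S\cap N$ is a normal cyclic $p$-group, the projection $S\to\Aut(N)$ has cyclic image with kernel $S\cap N$, the point stabiliser $S\cap\Aut(N)$ is cyclic, and $|S|=p^n\,|S\cap\Aut(N)|$. Splitting $\Aut(N)=A_p\times A_{p'}$ with $A_p\cong C_{p^{n-1}}$ and $A_{p'}\cong C_{p-1}$, the prime-to-$p$ part $A_{p'}$ acts fixed-point-freely on $N$, which rigidifies that factor and reduces the essential analysis to the $p$-group $\langle a\rangle\rtimes A_p\cong C_{p^n}\rtimes C_{p^{n-1}}$. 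Applying this to both $(G,G')$ and $(\bar G,\bar H)$ pins down the orders and isomorphism types of $G'$ and $\bar H$. The goal of this step is an explicit description of the transitive subgroups of $\Hol(N)$ and, for each, its distinguished class of admissible point stabilisers; I expect this class to be a single $G$-conjugacy class, so that ``being an admissible point stabiliser'' becomes a conjugacy-invariant notion inside a fixed transitive $G$.

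With this classification in hand, the remaining task is a descent from the quotient to $G$ itself. I would compare two normal subgroups of $G$: the subgroup $Q:=G\cap N$ coming from the given embedding $G\hookrightarrow\Hol(N)$, and the preimage in $G$ of $\bar G\cap N$ coming from the assumed embedding $\bar G\hookrightarrow\Hol(N)$. Both quotients of $G$ by these are cyclic and both subgroups are cyclic $p$-groups modulo $C$, so the metacyclic structure of $G$ is heavily over-determined; exploiting the fixed-point-free action of $A_{p'}$ together with the explicit form of the $p$-part, the aim is to show that the assumption forces $C=1$. Once $C=1$, $\bar H=H$ is itself an admissible point stabiliser of $G$, and the uniqueness from the previous step yields $H\sim_G G'$, as required.

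The main obstacle is exactly this control of the core $C$ and the accompanying bookkeeping in the $p$-part, uniform in $n$. The phenomenon to defeat is illustrated by $G=\Hol(C_{p^n})=\langle a\rangle\rtimes\langle\phi\rangle$ (with $|a|=p^n$ and $\phi(a)=a^r$ for a primitive root $r$ modulo $p^n$) and $H=\langle a^{p^{n-1}}\rangle\rtimes\langle\phi^{p}\rangle$: this $H$ has index $p^n$, is \emph{not} conjugate to $G'=\Aut(N)$ (it meets $N$ nontrivially, whereas every conjugate of $\Aut(N)$ meets $N$ trivially), and has $C\supseteq\langle a^{p^{n-1}}\rangle\neq 1$. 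Even though $\bar G=G/C$ is metacyclic with cyclic stabiliser $\bar H$, one must show that it simply does not embed as a transitive subgroup of $\Hol(N)$ with $\bar H$ as stabiliser. This fails for $p=2$, where $\Aut(C_{2^n})$ is noncyclic and such quotients survive; so any correct argument must use cyclicity of $\Aut(C_{p^n})$ and the fixed-point-freeness of $A_{p'}$ in an essential way. Isolating the precise numerical incompatibility --- for instance, that the order forced on the normal cyclic $p$-subgroup of any transitive embedding of $\bar G$ clashes with $|\bar H|$ whenever $C\neq 1$ --- is the crux of the proof.
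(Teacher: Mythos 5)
Your reduction to Question \ref{Q:HSG'} and the easy direction $(2)\Rightarrow(1)$ are fine, and you have correctly located where the difficulty sits, but the two steps that carry all the content are left as expectations rather than arguments, so as it stands this is an outline with a genuine gap. First, the assertion that within a fixed transitive $G\leq\Hol(N)$ the ``admissible point stabilisers'' form a single $G$-conjugacy class \emph{is} the implication $(1)\Rightarrow(2)$ in the core-free case; announcing that you ``expect'' it does not prove it. (Note also that admissibility must allow an abstract isomorphism of $G$ onto a possibly \emph{different} transitive subgroup $T$, so even a full classification of transitive subgroups would not by itself collapse these stabilisers into one conjugacy class.) Second, the claim that the hypothesis forces $C=1$ --- which you yourself call the crux --- is nowhere established; you only exhibit the example that must be defeated.

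The paper closes both gaps with two concrete mechanisms that your proposal does not supply. (i) Every transitive subgroup of $\Hol(C_{p^e})$ with $p$ odd contains an element of order $p^e$ (Lemma \ref{lem:transitivep}, via reduction to the Hall $\pi$-subgroup and the order formula \eqref{eqn:orderp}); since $H\cap N\subseteq C$ and $[\sigma^u,\varphi_a]^{p^{e-1}}\in\langle\sigma^{p^{e-1}}\rangle$ whenever $a\equiv 1\pmod p$, the quotient $G/C$ has no element of order $p^e$ as soon as $|H\cap N|\geq p$ (Lemma \ref{lem:intersection}(a)). This is exactly the ``numerical incompatibility'' that kills your example $H=\langle a^{p^{n-1}}\rangle\rtimes\langle\phi^{p}\rangle$, and it forces $H\cap N=1$. (ii) With $H\cap N=1$ and $G$ reduced to a $p$-group by the Hall-subgroup lemmas of Section 2 (this is where your fixed-point-freeness of the prime-to-$p$ part belongs, but it must be packaged as Lemmas \ref{lem:Q part} and \ref{lem:Q part'} to be usable), one writes $H=\langle[\sigma^u,\varphi_a]\rangle$ and conjugates by a suitably chosen $[\sigma^v,\varphi_b]\in G$ so that the $p$-adic valuation of the $\sigma$-exponent strictly increases; iterating this conjugates $H$ onto $\langle\varphi_a\rangle=\mathrm{Stab}_G(1_N)$. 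Without (i) and (ii), or substitutes for them, your argument does not go through.
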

\begin{proof}
   This follows from Proposition \ref{prop:odd}.
\end{proof}

\begin{theorem}\label{thm:even} Let $L/K$ be any finite separable extension of even prime power degree admitting a Hopf--Galois structure of cyclic type. Let $G$ denote the Galois group of the normal closure of $L/K$. Then $|G|=2^s[L:K]$ is also a power of $2$, and the following hold:
\begin{enumerate}[$(1)$]
\item If $s=1$ and $G$ has an element of order $[L:K]$, then every extension $L'/K$ parallel to $L/K$ admits a Hopf--Galois structure of cyclic type.
\item If $s=1$ and $G$ has no element of order $[L:K]$, or if $s\geq 2$, then there is a normal extension $L'/K$ parallel to $L/K$ that does not admit a Hopf--Galois structure of cyclic type.
\end{enumerate}
The case $s=0$ is irrelevant because then $L/K$ is normal.
\end{theorem}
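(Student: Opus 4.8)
The plan is to work entirely on the group-theoretic side of Question~\ref{Q:HSG'} and to exploit systematically that $\Hol(N)$ is a $2$-group. Write $[L:K]=2^n$ and $N=C_{2^n}$. Since $|\Hol(N)|=|N|\cdot|\Aut(N)|=2^{n}\cdot 2^{n-1}=2^{2n-1}$, every subgroup $G\le\Hol(N)$ is a $2$-group, so $|G|=[G:G']\,|G'|=2^{n}\cdot 2^{s}$ with $|G'|=2^{s}$; this gives the assertion that $|G|=2^s[L:K]$ and shows $s=0$ forces $G'=1$, i.e.\ $L/K$ normal. For a \emph{normal} subgroup $H\trianglelefteq G$ of index $2^n$ the parallel extension $L^H/K$ is Galois with group $G/H$, and by the equivalence $(1)\Leftrightarrow(2)$ it admits a Hopf--Galois structure of cyclic type if and only if $G/H$ embeds as a \emph{regular} subgroup of $\Hol(C_{2^n})$. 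For a \emph{non-normal} $H$ of order $2$ (the only case relevant to part~(1)) the core is trivial, and it suffices, as noted after Question~\ref{Q:HSG'}, to place $H$ in the $\Aut(G)$-orbit of $G'$.

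The first technical ingredient is a criterion for which groups $\Gamma$ of order $2^n$ embed regularly in $\Hol(C_{2^n})$. I would begin with an explicit description of the fixed-point-free involutions: these are exactly the translation $(2^{n-1},1)$ and the maps $(a,-1)\colon x\mapsto a-x$ with $a$ odd. A direct check shows they contain no elementary abelian subgroup of rank $3$ --- the product of two reflections $(a,-1)(b,-1)=(a-b,1)$ is a translation, and in any commuting pair the translation $(2^{n-1},1)$ is already their product --- so no regular subgroup of $\Hol(C_{2^n})$ contains $(C_2)^3$. Hence a group whose $2$-torsion has rank $\geq 3$ admits no cyclic-type structure. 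A complementary lifting computation rules out $C_{2^a}\times C_{2^b}$ with $a,b\geq 2$ (for example $C_4\times C_4$ inside $\Hol(C_{16})$: every lift of a generator of the required $\Aut$-image to an element of order $4$ fails to be fixed-point-free), while for the groups actually needed in part~(1) --- namely $C_{2^n}$ and the groups with a cyclic subgroup of index $2$ --- one exhibits explicit regular dihedral, semidihedral, modular, and quaternion copies built from $(2,1)$ and a suitable reflection. Packaging these computations into a clean embeddability criterion is one of the main tasks.

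For part~(1), under $s=1$ the group $G$ has order $2^{n+1}$ with a cyclic subgroup $\langle t\rangle\cong C_{2^n}$ of index $2$, hence normal. Because $G'$ is core-free, $G$ is non-abelian (an abelian $G$ would have $G'$ normal, forcing $L=\widetilde L$), so the classification of $2$-groups with a cyclic maximal subgroup leaves only $D_{2^{n+1}}$, $SD_{2^{n+1}}$, and $M_{2^{n+1}}$; the quaternion group is excluded since its unique involution is central, making its order-$2$ subgroup non-core-free. I would then run through the order-$2$ subgroups $H$: if $H\subseteq\langle t\rangle$ then $H$ is characteristic in $\langle t\rangle$, hence normal, and $G/H$ has a cyclic subgroup of index $2$, so it embeds; if $H\not\subseteq\langle t\rangle$ is normal then $G/H\cong C_{2^n}$ is cyclic; and the remaining (non-normal) $H$ all lie in the $\Aut(G)$-orbit of $G'$ (for $D_{2^{n+1}}$, every reflection is the image of $G'$ under an automorphism fixing $t$, and similarly for $SD,M$). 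In all cases $L^H/K$ admits a cyclic-type structure.

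For part~(2) the goal is to produce a normal $H\trianglelefteq G$ of index $2^n$ with $G/H$ \emph{not} regularly embeddable; the criterion above then shows the normal parallel extension $L^H/K$ has no cyclic-type structure. Here I would use that every transitive $G$ has small rank: with $M=G\cap\rho(N)$ cyclic and normal one has $G/M\hookrightarrow\Aut(C_{2^n})\cong C_2\times C_{2^{n-2}}$, whence the number of generators satisfies $d(G)\leq d(M)+d(G/M)\leq 1+2=3$. When $d(G)=3$, choosing a $G$-normal $H\leq\Phi(G)$ of order $2^s$ gives $\Phi(G/H)=\Phi(G)/H$ and hence $G/H$ of rank $3$, which has $2$-torsion rank $3$ and so does not embed. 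When $d(G)=2$, I would instead take $H=[G,G]$: the hypotheses force $|[G,G]|=2^s$ and $G/[G,G]\cong C_{2^a}\times C_{2^b}$ with $a,b\geq 2$ (as in the model case $C_{2^n}\rtimes C_4$, where $[G,G]$ has order $4$ and the abelianisation is $C_4\times C_4$), which again does not embed. \textbf{The main obstacle} I expect is precisely making this dichotomy airtight for all $n$: one must show that the two part~(2) hypotheses ($s\geq 2$, or $s=1$ with no element of order $2^n$) are exactly the conditions under which such a bad normal subgroup exists, and that the excluded rank-$2$ ``near-cyclic'' candidates (e.g.\ $C_4\times C_4$ or $C_4\rtimes C_4$) cannot themselves occur as transitive subgroups because $G/M$ would fail to embed in $\Aut(C_{2^n})$. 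This rests on a complete structural description of the transitive subgroups of $\Hol(C_{2^n})$, which is the technical heart of the argument; once it is in place, the construction of $H$ together with the embeddability criterion completes both parts.
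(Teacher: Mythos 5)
Your part (1) is essentially correct and runs parallel to the paper's: where you invoke the classification of $2$-groups with a cyclic maximal subgroup ($D$, $SD$, $M$, with $Q$ and the abelian groups excluded by core-freeness of $G'$) and the transitivity of $\Aut(G)$ on the non-central involutions, the paper instead writes $G=R\rtimes G'$ with $R=\langle[\sigma^u,\varphi_a]\rangle$ cyclic and regular and treats the possible positions of $H$ relative to $R$ by hand, quoting Rump's classification of cyclic braces (Lemma \ref{lem:Rump}) for the quotient by $\langle\sigma^{2^{e-1}}\rangle$. Either route works, and the positive embeddability facts you need ($D_{2^n}$, $C_{2^{n-1}}\times C_2$ and $C_2\times C_2$ occur regularly) are exactly the easy direction of Lemma \ref{lem:Rump}.

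Part (2), however, has two genuine gaps. First, in your $d(G)=3$ branch you pass from $d(G/H)=3$ to ``$G/H$ has $2$-torsion rank $3$''; this implication is false for $2$-groups in general (minimal generator number $3$ does not force an elementary abelian subgroup of rank $3$), and the only non-embeddability criterion you actually prove is the absence of $(C_2)^3$ in regular subgroups. The correct obstruction at this point is that a $3$-generated group has no cyclic subgroup of index $2$, but that requires the full ``only if'' direction of Lemma \ref{lem:Rump}, which you have deferred. Second, and more seriously, in your $d(G)=2$ branch the assertions $|[G,G]|=2^s$ and $G/[G,G]\cong C_{2^a}\times C_{2^b}$ with $a,b\ge 2$ are false in general: for $G=N\rtimes\langle\varphi_3\rangle\le\Hol(C_{16})$ one has $s=2$ but $[G,G]=\langle\sigma^2\rangle$ of order $8$, so $H=[G,G]$ does not even have index $2^e$, and $G/[G,G]\cong C_2\times C_4$ has a cyclic subgroup of index $2$. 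So the recipe ``take $H=[G,G]$'' breaks down, and your closing paragraph concedes that the whole dichotomy rests on a structural classification of transitive subgroups of $\Hol(C_{2^n})$ that you have not carried out. The paper avoids all of this: it takes $H=\langle\sigma^{2^{e-s}}\rangle$, which is normal of index $2^e$ because $\sigma^{2^{e-s-1}}\in G$ by the index formula \eqref{eqn:sigma G}, and the obstruction is then purely about element orders --- $G/H$ has no element of order $2^{e-1}$ (Lemma \ref{lem:intersection}, using $|H\cap N|=2^s$ and, when $s=1$, the absence of elements of order $2^e$ in $G$), whereas every transitive subgroup of $\Hol(N)$ has one (Lemma \ref{lem:transitive2}). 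No classification of regular subgroups is needed for part (2), and you should replace your rank/commutator analysis by this order-theoretic obstruction.
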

\begin{proof}
    This follows from Proposition \ref{prop:p=2}.
\end{proof}

In the setting of Theorem \ref{thm:even}, we can in fact give a complete characterisation, which is group-theoretic, of the (not necessarily normal) extensions $L'/K$ parallel to $L/K$ that do not admit any Hopf--Galois structure of cyclic type. To that end, we use the set-up \eqref{diagram}, where we identify $G$ as a transitive subgroup of $\Hol(N)$ and $G'=\mathrm{Stab}_G(1_N)$ for $N$ cyclic of order $[L:K]$.

\begin{theorem}\label{thm:H} Let $N$ be the cyclic group of order $2^e$ and let $G$ be a transitive subgroup of $\Hol(N)$. For any subgroup $H$ of $G$ of index $2^e$ with $C = \mathrm{Core}_G(H)$, the following are equivalent:
%\vspace{-1mm}
\begin{enumerate}[$(1)$]
\item $G/C$ is not isomorphic to any transitive subgroup $T$ of $\Hol(N)$ under an isomorphism that takes $H/C$ to the stabiliser $\mathrm{Stab}_T(1_N)$.
%\vspace{-1.25mm}
\item Any one of the following holds:
%\vspace{-1.25mm}
\begin{enumerate}[$(i)$]
\item $|H\cap N|\geq 4$;
%\vspace{-1.5mm}
\item $|H\cap N| = 2$ and $G$ has no element of order $2^e$;
%\vspace{-1.5mm}
\item $|H\cap N|=2$ and $H$ is not normal in $G$;
%\vspace{-1.5mm}
\item $|H\cap N|=1$ with $H = \langle[\sigma^u,\varphi_{-1}]\rangle $ for an odd integer $u$, and 
%{\setlength{\abovedisplayskip}{5pt}
%\setlength{\belowdisplayskip}{5pt}
\begin{align*} 
&|G| = 2^{e+1},\,\  \mathrm{Stab}_G(1_N) = \langle\varphi_{1+2^{e-1}}\rangle,\mbox{ and}\\
&\hspace{1cm}\mbox{either }|G\cap N|\geq 8\mbox{ or }|G\cap N| = |[G,G]|=4,
\end{align*}
%}
where $\sigma$ is a generator of $N$, and $\varphi_a$ denotes the automorphism on $N$ defined by $\varphi_a(\sigma)=\sigma^a$ for each odd integer $a$.
\end{enumerate}
\end{enumerate}
\end{theorem}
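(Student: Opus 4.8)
The plan is to route everything through the Byott reformulation recalled in the introduction. Applying that equivalence to $L^H/K$, whose normal closure has Galois group $G/C$, condition $(1)$ says precisely that $\overline G := G/C$ admits no transitive embedding into $\Hol(N)$ carrying $\overline H := H/C$ to a point stabiliser; equivalently, by Greither--Pareigis, that the symmetric group $\mathrm{Sym}(\overline G/\overline H)$ contains no cyclic regular subgroup of order $2^e$ normalised by the transitive subgroup $\lambda(\overline G)$. So the whole statement is a classification of the pairs $(G,H)$ admitting no such cyclic regular subgroup, and I would organise it by the single invariant $|H\cap N|$, matching the three regimes $|H\cap N|\ge 4$, $|H\cap N|=2$, $|H\cap N|=1$ of part $(2)$.

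First I would record the structure of transitive $G\le\Hol(N)$. Writing $N=\langle\sigma\rangle$ and $A=\Aut(N)=(\mathbb Z/2^e)^\times$, set $M=G\cap N$ (cyclic, say of order $2^m$), and note $\mathrm{Stab}_G(1_N)=G\cap A$ has order $2^s$, with $|H|=2^s$ and $|G|=2^{e+s}$. Transitivity, phrased through the crossed homomorphism $\pi(G)\to N/M$, yields the numerical constraints $m+\log_2|\pi(G)|=e+s$ and $m\ge s+1$, so every transitive model has its cyclic part strictly larger than its stabiliser. The key preliminary is that $H\cap N=H\cap M$ is the unique subgroup of $M$ of its order, hence characteristic in $M$ and normal in $G$; thus $H\cap N\subseteq C$ and in fact $C\cap N=H\cap N$. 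This determines the image $\overline M$ of $M$ in $\overline G$ (cyclic of order $2^m/|H\cap N|$, with $\overline G/\overline M$ abelian), and Dedekind's modular law gives $\overline H\cap\overline M=1$. These are the data from which I read off whether $\overline G$ can carry a cyclic regular structure.

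I would then treat the three regimes. For $|H\cap N|\ge 4$ (case $(i)$) the largest cyclic normal subgroup of $\overline G$ with abelian quotient is forced to be too small --- of order at most $2^{m-2}$ --- to serve as $T\cap N$ for any transitive $T\cong\overline G$, since such a $T$ needs $|T\cap N|\ge 2\,|\mathrm{Stab}_T(1_N)|=2\,|\overline H|$; this yields non-existence unconditionally. For $|H\cap N|=2$ I would show that $L^H/K$ is normal exactly when $H\trianglelefteq G$, in which case $\overline H=1$ and existence amounts to $\overline G\cong N$, i.e.\ to $G$ having an element of order $2^e$; non-normality gives case $(iii)$ and the absence of such an element gives case $(ii)$, each obstructing the required cyclic regular subgroup, while the complementary situation produces one explicitly.

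The main obstacle is the regime $|H\cap N|=1$ and the isolation of the exact exceptional family $(iv)$. Here the candidate cyclic regular subgroups need \emph{not} arise from subgroups of $\overline G$, so non-existence must be established by ruling out \emph{every} cyclic order-$2^e$ regular subgroup of $\mathrm{Sym}(\overline G/\overline H)$ normalised by $\lambda(\overline G)$. This is where the delicate points sit: the forced shape $|G|=2^{e+1}$ and $\mathrm{Stab}_G(1_N)=\langle\varphi_{1+2^{e-1}}\rangle$ (as opposed to the other two involutions $\varphi_{-1}$ and $\varphi_{2^{e-1}-1}$ of $A$, whose associated groups --- dihedral and semidihedral --- admit the embedding), together with the dichotomy $|G\cap N|\ge 8$ versus $|G\cap N|=|[G,G]|=4$. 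I would compute $[\overline G,\overline G]$ explicitly: the commutator of a translation $\sigma^j\in M$ with an element of automorphism part $\varphi_b$ equals $\sigma^{j(1-b)}$, and combining these with the commutators between two automorphism-type elements pins down $[\overline G,\overline G]$, which I would then use as an isomorphism invariant to compare $\overline G$ against the list of transitive subgroups of $\Hol(N)$ with a prescribed stabiliser, checking the few boundary configurations the commutator count separates. The existence half of each non-exceptional case should follow by writing down an explicit isomorphism onto a transitive subgroup of $\Hol(N)$, invoking the automorphism-orbit remark of the introduction whenever $\overline H$ is $\Aut(\overline G)$-conjugate to $\mathrm{Stab}_{\overline G}(1_N)$.
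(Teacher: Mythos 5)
Your overall architecture (trichotomy on $|H\cap N|$, Byott's reformulation, the observations $C\cap N=H\cap N$ and $|T\cap N|\ge 2|\mathrm{Stab}_T(1_N)|$) matches the paper's, but the proposal defers or misstates exactly the steps that carry the difficulty. The clearest error is in the regime $|H\cap N|=2$ with $H\trianglelefteq G$: you claim ``existence amounts to $\overline G\cong N$,'' but a regular subgroup of $\Hol(N)$ need not be cyclic. The correct criterion is Rump's classification (restated as Lemma \ref{lem:Rump}): a group of order $2^e$ embeds regularly iff it has a cyclic subgroup of index $2$ (excepting $C_4$ when $e=2$), which then translates into ``$G$ has an element of order $2^e$'' via Lemma \ref{lem:intersection}(c). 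Your final answer coincides with (ii) only by accident of this classification, which you never invoke. Worse, for $H$ not normal (case (iii)) you give no argument at all for why the embedding is obstructed; this is the hardest part of that regime, and the paper needs the identity $|Z(G)|\cdot|[G,G]|=2^e$ for non-regular transitive subgroups (Lemma \ref{lem:center-commutator}) to force the contradictory bounds $[Z(G/C):Z(G)C/C]\le 2$ and $\ge 4$. Nothing in your sketch replaces that.

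Two further gaps. For $|H\cap N|\ge 4$ you argue that ``the largest cyclic normal subgroup of $\overline G$ with abelian quotient'' is too small, implicitly identifying it with the image of $G\cap N$; that identification is unjustified, since $\overline G$ may have cyclic normal subgroups with abelian quotient not contained in $\overline M$. (The paper's route is cleaner: $|H\cap N|\ge 4$ kills all elements of order $2^{e-1}$ in $G/C$, while every transitive subgroup of $\Hol(N)$ has one.) Finally, for $|H\cap N|=1$ the entire content --- that $H$ is conjugate or $\Aut(G)$-equivalent to $\mathrm{Stab}_G(1_N)$ unless $H=\langle[\sigma^u,\varphi_{-1}]\rangle$ with $u$ odd (Lemma \ref{lem:cap1}, which requires an explicitly constructed outer automorphism in the non-cyclic case), and the resolution of that exceptional family via the centraliser count $|C_G([\sigma^u,\varphi_{-1}])|$ versus $|C_G(\varphi_{1+2^{e-1}})|$ and a further explicit automorphism when $|[G,G]|=2$ --- is replaced by an appeal to ``the list of transitive subgroups with a prescribed stabiliser,'' which does not exist in your write-up and is precisely what must be established. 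As it stands the proposal is a plausible plan, not a proof.
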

\begin{proof}
    This follows from Propositions \ref{prop:4}, \ref{prop:2}, and \ref{prop:1}.
\end{proof}

\begin{remark} Let $N$ be a cyclic group. It was shown in \cite[Theorem 3.9]{Darlington} that Question \ref{Q:HGS} admits a positive answer when $|N|$ is the product of two distinct primes. Our results show that the behaviour is on the other extreme when $|N|$ is a  prime power, especially odd prime power. Also, let us remark that for $|N|$ squarefree, the answer to Question \ref{Q:HGS} is ``no" in general when there are three or more prime factors, by calculations in \textsc{Magma} \cite{magma}.
\end{remark}

\begin{remark} The proof of \cite[Lemma 3.1]{Darlington}, which is part of \cite[Theorem 3.9]{Darlington}, has a small gap. It cites \cite[Theorem 1]{Byott}, which only holds for normal separable extensions. Nevertheless, the statement is still true by the following simple fact --- when $N$ is cyclic of order $pq$, where $p>q$ are primes with $p\not\equiv 1\pmod{q}$, for any subgroup $G$ of $\Hol(N)$, the subgroups of $G$ of index $pq$ are Hall subgroups and so are conjugates of each other because $G$ is soluble here. We highlight that the lemma also carries the implicit hypothesis that the original extension admits a Hopf--Galois structure in the first place.
\end{remark}

\section{Subgroups of the holomorph}

In this section, let $N$ be a finite group. We shall assume that $\Hol(N)$ contains a unique Hall $\pi$-subgroup $Q$, where $\pi$ is the set of prime divisors of $|N|$. By the Schur--Zassenhaus theorem, we know that $\Hol(N) =Q\rtimes X$ for some subgroup $X$ of order coprime to $|N|$. For example, this is the case when $N$ is cyclic and when $N$ has squarefree order (see \cite[Lemma 2.4]{Darlington}).

\medskip

Under the above hypothesis, we can restrict to transitive subgroups of $Q$ in some situations. The next two lemmas are needed for the proof of Theorem \ref{thm:odd}. But they are irrelevant for Theorems \ref{thm:even} and \ref{thm:H} because $\Aut(N)$ is a $2$-group when $N$ is cyclic of order a power of $2$.

\begin{lemma}\label{lem:Q part}
    Let $G$ be a subgroup of $\Hol(N)$ and let $H$ be any subgroup of $G$.
    \begin{enumerate}[$(a)$]
    \item If $G$ is transitive, then $G\cap Q$ is also transitive.
    \item If the prime factors of $[G:H]$ divide $|N|$, then $[G:H] = [G\cap Q: H\cap Q]$.
    \end{enumerate}
    \end{lemma}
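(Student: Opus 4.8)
The plan is to prove both parts by analysing how the Hall $\pi$-subgroup $Q$ meets the relevant subgroups. The key structural fact I would exploit is the decomposition $\Hol(N) = Q\rtimes X$ with $\gcd(|X|,|N|)=1$, which means $Q$ contains a full Sylow $p$-subgroup of $\Hol(N)$ for every prime $p\in\pi$, and $Q$ is the unique Hall $\pi$-subgroup.

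\begin{proof}[Proof proposal]
For part $(a)$, the strategy is to observe that transitivity of $G$ on the coset space $N$ (identified with $\Hol(N)/\Aut(N)$, or equivalently with the underlying set on which $N$ acts regularly) is equivalent to the orbit of the base point having size $|N|$. First I would note that $|N|$ is a $\pi$-number by definition of $\pi$, so the stabiliser $\mathrm{Stab}_G(1_N)$ has index $|N|$ in $G$, a $\pi$-number. The plan is then to show $G\cap Q$ already surjects onto the orbit. Since $[\Hol(N):Q]=|X|$ is coprime to $|N|$, and the point stabiliser $\mathrm{Stab}_{\Hol(N)}(1_N)=\Aut(N)$ has order coprime to $|N|$, every element moving the base point around the orbit can be adjusted by a stabiliser element to lie in $Q$; more precisely, I would argue that $Q$ itself acts transitively on $N$ because $N\leq Q$ acts regularly, and then intersect with $G$. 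The cleanest route is: the orbit of $1_N$ under $G\cap Q$ is a block whose size divides both $|N|$ and $|G\cap Q|$; using that $G$ is transitive and a counting/coprimality argument on indices, I would force this orbit size up to the full $|N|$. Concretely, $[G : G\cap Q]$ divides $|X|$ (coprime to $|N|$), so the transitive $G$-action restricted to $G\cap Q$ cannot lose any of the $\pi$-part of the orbit.

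For part $(b)$, the hypothesis is that every prime factor of $[G:H]$ divides $|N|$, i.e. $[G:H]$ is a $\pi$-number. The plan is to compare the two index computations via the second isomorphism theorem. I would write $[G:H] = [G:H\cap Q]/[H:H\cap Q]$ and note that $[H:H\cap Q]$ divides $[\Hol(N):Q]=|X|$, hence is a $\pi'$-number, while $[G:G\cap Q]$ similarly divides $|X|$. Since $H\cap Q \leq G\cap Q$, I would factor $[G:H\cap Q] = [G:G\cap Q]\cdot[G\cap Q:H\cap Q]$ and combine these to get $[G:H]\cdot[H:H\cap Q] = [G:G\cap Q]\cdot[G\cap Q:H\cap Q]$. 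The left side has $\pi$-part equal to $[G:H]$ (as $[H:H\cap Q]$ is a $\pi'$-number and $[G:H]$ is a $\pi$-number), and the right side has $\pi$-part equal to $[G\cap Q:H\cap Q]$ (as $[G:G\cap Q]$ is a $\pi'$-number and $[G\cap Q:H\cap Q]$ divides $|Q|$, a $\pi$-number). Extracting $\pi$-parts of both sides then yields the desired equality $[G:H]=[G\cap Q:H\cap Q]$.

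The main obstacle I anticipate is part $(a)$, specifically making rigorous the passage from transitivity of $G$ to transitivity of $G\cap Q$ without circular reasoning about orbit sizes. The delicate point is that a subgroup of a transitive group need not be transitive in general; transitivity is rescued here only because $N\leq Q$ and the ``lost'' part $[G:G\cap Q]$ lives entirely in the $\pi'$-direction $X$. I would handle this by an explicit orbit-stabiliser count: the $G$-orbit of $1_N$ has size $|N|$, the $(G\cap Q)$-orbit has some size $m\mid |N|$, and the number of $(G\cap Q)$-orbits equals the number of $(G\cap Q)$-cosets meeting the $G$-orbit nontrivially, which is controlled by the $\pi'$-number $[G:G\cap Q]$; since $m$ is a $\pi$-number dividing $|N|$ and the orbit count is forced to be $1$ by coprimality, I conclude $m=|N|$. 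Part $(b)$ is essentially bookkeeping with Hall numbers once the coprimality of $[H:H\cap Q]$ and $[G:G\cap Q]$ to $|N|$ is in hand.
\end{proof}
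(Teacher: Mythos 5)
Your proposal is correct and follows essentially the same route as the paper: both parts come down to the fact that $[G:G\cap Q]$ (and likewise $[H:H\cap Q]$, $[GQ:HQ]$) is coprime to $|N|$ because the relevant quotient embeds into $X$, followed by an orbit--stabiliser or index count and extraction of the $\pi$-part. The only cosmetic difference is in part $(b)$, where the paper uses the identity $[GQ:HQ][G\cap Q:H\cap Q]=[G:H]$ while you compare $\pi$-parts of $[G:H][H:H\cap Q]=[G:G\cap Q][G\cap Q:H\cap Q]$; these are the same coprimality bookkeeping.
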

\begin{proof} To prove (a), observe that
\begin{align*}
&[G:G\cap Q][G\cap Q:\mathrm{Stab}_{G\cap Q}(1_N)] \\
&\hspace{1cm} = [G:\mathrm{Stab}_{G\cap Q}(1_N)]\\
&\hspace{1cm} =[G:\mathrm{Stab}_G(1_N)][\mathrm{Stab}_G(1_N):\mathrm{Stab}_{G\cap Q}(1_N)].
\end{align*}
Note that $[G:G\cap Q]$ is coprime to $|N|$ because $G/G\cap Q$ embeds into $X$. If $G$ is transitive, then $[G:\mathrm{Stab}_G(1_N)] = |N|$, and we deduce that
\[ [G\cap Q : \mathrm{Stab}_{G\cap Q}(1_N)] = |N|\]
must also hold, namely $G\cap Q$ is transitive. We remark that the argument here is due to \cite[Lemma 2.1]{Crespo}.

\medskip

To prove (b), observe that
\[ [GQ:HQ][G\cap Q:H\cap Q] = \frac{|G||Q|}{|H||Q|}= [G:H].\]
Note that $[GQ:HQ]$ is coprime to $|N|$ because $GQ/Q$ embeds into $X$. If the prime factors of $[G:H]$ divide $|N|$, we must then have
\[[GQ:HQ]=1,\quad [G:H]=[G\cap Q:H\cap Q],\]
which is as claimed.
\end{proof}

The next lemma is basically \cite[Proposition 2.6(i)]{Darlington}; although $|N|$ is assumed to be squarefree in \cite[Section 2]{Darlington}, most of the arguments there are still valid as long as $\Hol(N)$ has a unique Hall $\pi$-subgroup. We include a proof here because we are loosening some of the hypotheses of \cite{Darlington}.

\medskip

We first make an observation. Let $G$ be a subgroup of $\Hol(N)$. Notice that $G\cap Q$ is a normal Hall $\pi$-subgroup of $G$ because $G/G\cap Q$ embeds into $X$. By the Schur--Zassenhaus theorem, we can then write $G = (G\cap Q)\rtimes Y$, where $Y$ is a subgroup of order coprime to $|N|$. Similarly, for any subgroup $H$ of $G$ and the normaliser $N_G(H\cap Q)$ of $H\cap Q$ in $G$, we may write
\begin{align*}
H  &= (H\cap Q)\rtimes V,\\
N_G(H\cap Q)& = (N_G(H\cap Q)\cap Q)\rtimes W, 
\end{align*}
%\[ H  = (H\cap Q)\rtimes V,\quad N_G(H\cap Q) = (N_G(H\cap Q)\cap Q)\rtimes W, \]
where $V$ and $W$ have orders coprime to $|N|$. In the case that the prime factors of $[G:H]$ divide $|N|$, we must have $|Y| = |V|$ by Lemma \ref{lem:Q part}. Since $N_G(H\cap Q)$ contains $H$, the prime factors of $[G:N_G(H\cap Q)]$ also divide $|N|$, so again we have $|Y| = |W|$ by Lemma \ref{lem:Q part}. We then see that
\[ N_G(H\cap Q) = (N_G(H\cap Q)\cap Q) \rtimes V\]
holds by order consideration. In other words, we can take $W = V$.

\begin{lemma}\label{lem:Q part'} Let $G$ be a subgroup of $\Hol(N)$ and let $H_1,\, H_2$ be any subgroups of $G$ such that the prime factors of their indices $[G:H_1],\, [G:H_2]$ divide $|N|$. The following are equivalent:
\begin{enumerate}[$(1)$]
\item $H_1$ and $H_2$ are conjugate in $G$.
\item $H_1\cap Q$ and $H_2\cap Q$ are conjugate in $G$.
\end{enumerate}
\end{lemma}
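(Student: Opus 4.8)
The plan is to prove the two implications separately, with the forward direction being essentially formal and the reverse direction carrying all the content. Throughout I would use that the unique Hall $\pi$-subgroup $Q$ is characteristic, hence normal, in $\Hol(N)$, so that $gQg^{-1}=Q$ for every $g\in G$. For $(1)\Rightarrow(2)$, if $H_1 = gH_2g^{-1}$, then intersecting with $Q$ gives $H_1\cap Q = gH_2g^{-1}\cap gQg^{-1} = g(H_2\cap Q)g^{-1}$, so $H_1\cap Q$ and $H_2\cap Q$ are conjugate in $G$ by the same element $g$.

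For $(2)\Rightarrow(1)$ I would begin with a reduction. The equality $H_1\cap Q = g(H_2\cap Q)g^{-1}$ rewrites, using normality of $Q$, as $H_1\cap Q = (gH_2g^{-1})\cap Q$. Replacing $H_2$ by its conjugate $gH_2g^{-1}$, which has the same index and satisfies the same hypotheses, I may assume $H_1\cap Q = H_2\cap Q$; call this common subgroup $D$. It then suffices to show that $H_1$ and $H_2$ are conjugate in $G$.

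The key step is to localise inside $M := N_G(D)$. Both $H_1$ and $H_2$ lie in $M$, since each $H_i$ normalises its own normal subgroup $H_i\cap Q = D$. Inside $M$, the subgroup $D$ is a normal $\pi$-group and $M\cap Q$ is the normal Hall $\pi$-subgroup of $M$, so in the quotient $M/D$ the image $(M\cap Q)/D$ is a normal Hall $\pi$-subgroup. I would then argue that $H_1/D$ and $H_2/D$ are complements to it: the intersection $(H_i/D)\cap((M\cap Q)/D) = (H_i\cap Q)/D$ is trivial, and a short order computation using Lemma \ref{lem:Q part}(b) shows $[H_i:D] = [G:G\cap Q]$, which equals the $\pi'$-part $[M:M\cap Q]$ of $|M|$ because $D\le H_i\le M\le G$ forces these groups to share the same $\pi'$-part. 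By the conjugacy part of the Schur--Zassenhaus theorem (applicable since the coprime orders force one factor to be solvable), $H_1/D$ and $H_2/D$ are conjugate in $M/D$; lifting a conjugating element $m\in M$ and using that $D$ lies in both $H_1$ and $H_2$ yields $mH_1m^{-1}=H_2$, hence conjugacy in $G$.

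The main obstacle is this reverse direction, and the decisive insight is recognising $H_1,H_2$ modulo $D$ as two complements of a common normal Hall $\pi$-subgroup of $N_G(D)/D$, after which Schur--Zassenhaus finishes the job. The point demanding the most care is the order bookkeeping confirming that each $H_i/D$ is a \emph{full} complement rather than a proper subgroup of one; this is precisely where the hypothesis that the prime factors of $[G:H_i]$ divide $|N|$, together with Lemma \ref{lem:Q part}, is essential. Indeed the observation preceding the lemma, which writes $N_G(H\cap Q) = (N_G(H\cap Q)\cap Q)\rtimes V$ with the same complement $V$ as in $H=(H\cap Q)\rtimes V$, packages exactly this bookkeeping and could be invoked in place of the direct computation.
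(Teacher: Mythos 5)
Your proof is correct and follows essentially the same route as the paper: both directions hinge on the normality of $Q$ in $\Hol(N)$ for $(1)\Rightarrow(2)$, and on the conjugacy part of the Schur--Zassenhaus theorem applied to Hall $\pi'$-complements inside $N_G(H_i\cap Q)$ for $(2)\Rightarrow(1)$. Your reduction to $D=H_1\cap Q=H_2\cap Q$ and passage to $N_G(D)/D$ is merely a repackaging of the paper's direct manipulation of the complements $V_1,V_2$ in $H_i=(H_i\cap Q)\rtimes V_i$ and $N_G(H_2\cap Q)=(N_G(H_2\cap Q)\cap Q)\rtimes V_2$, with the same order bookkeeping supplied by Lemma \ref{lem:Q part}.
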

\begin{proof} If $H_1$ and $H_2$ are conjugate in $G$, then clearly  $H_1\cap Q$ and $H_2\cap Q$ are also conjugate in $G$ because $Q$ is normal in $\Hol(N)$. If $H_1\cap Q$ and $H_2\cap Q$ are conjugate in $G$, then we apply the above observation and write
\begin{align*}
    H_1& = (H_1\cap Q) \rtimes V_1,\\
    H_2&=(H_2\cap Q)\rtimes V_2,\quad \ N_G(H_2\cap Q) = (N_G(H_2\cap Q)\cap Q)\rtimes V_2,
\end{align*}
where $|V_1|=[G:G\cap Q]=|V_2|$ is coprime to $|N|$. The hypothesis here is that  $H_2\cap Q = g(H_1\cap Q)g^{-1}$ for some $g\in G$. But then 
\[ gV_1g^{-1} \subseteq gN_G(H_1\cap Q)g^{-1} =  N_G(H_2\cap Q),\]and so by order consideration, we have
\[ N_G(H_2\cap Q) = (N_G(H_2\cap Q)\cap Q) \rtimes (gV_1g^{-1}).\]
We then deduce from the Schur--Zassenhaus theorem that $V_2 = hgV_1g^{-1}h^{-1}$ for some $h\in N_G(H_2\cap Q)$. As a consequence, we have
\[(H_2\cap Q)\rtimes V_2 = hg( (H_1\cap Q)\rtimes V_1) g^{-1}h^{-1},\]
whence $H_1$ and $H_2$ are conjugate in $G$. 
\end{proof}

\begin{remark}
In the proof of Lemma \ref{lem:Q part'} given in \cite[Proposition 2.6(i)]{Darlington}, the desired $h\in N_G(H_2\cap Q)$ was derived from the fact that $gV_1g^{-1}$ and $V_2$ are both Hall $\pi'$-subgroups of $N_G(H_2\cap Q)$. In \cite[Section 2]{Darlington}, since $|N|$ is assumed to be squarefree, indeed $\Hol(N)$ is soluble and the Hall $\pi'$-subgroups of $N_G(H_2\cap Q)$ are conjugates. Our proof shows that it suffices to apply the Schur--Zassenhaus theorem and solubility of $\Hol(N)$ is not required.
\end{remark}

\section{Notation and preliminaries}

In the rest of this paper, let $N = \langle \sigma \rangle$ be a cyclic group of prime power order $p^e$ with $e\geq 2$. The case $e=1$ can be disregarded --- it is trivial for $p$ odd because the index $p$ subgroups of any $G\leq \Hol(N)$ are conjugates of each other by the Schur--Zassenhaus theorem, and is irrelevant for $p=2$ because $\Hol(N) = N$.%. for any subgroup $G$ of $\Hol(N)$, the subgroups of $G$ of index $p$ are all conjugates of each other by the Schur--Zassenhaus theorem.

\medskip

For any integer $a$ coprime to $p$, let us define
\[ \varphi_a : N \rightarrow N;\quad \varphi_a(\sigma) = \sigma^a,\]
which lies in $\Aut(N)$. It shall be helpful to recall that:
\begin{enumerate}[$\bullet$]
\item If $p$ is odd, then $\Aut(N) \simeq C_{p^{e-1}(p-1)}$, and its Sylow $p$-subgroup is the subgroup consisting of the $\varphi_a$ for which $a\equiv 1\pmod{p}$.
%\item If $p=2$, then $\Aut(N)\simeq C_2\times C_{2^{e-2}}$, and $\Aut(N) = \langle\varphi_{-1}\rangle \times \langle \varphi_5\rangle$ for $e\geq 3$, where $\langle \varphi_5\rangle$ is the subgroup consisting of the $\varphi_a$ for which $a\equiv 1\pmod{4}$.
\item If $p=2$, then $\Aut(N)\simeq C_2\times C_{2^{e-2}}$, or to be precise $\Aut(N) = \langle\varphi_{-1}\rangle \times \langle \varphi_5\rangle$, where $\langle \varphi_5\rangle$ is the subgroup consisting of the $\varphi_a$ for which $a\equiv 1\pmod{4}$.
\end{enumerate}
We shall write elements of $\Hol(N)$ in the form $[\sigma^u,\varphi_a]$, where $u$ and $a$ are any integers with $a$ coprime to $p$. Then the multiplication in $\Hol(N)$ is given by
\[ [\sigma^u,\varphi_a][\sigma^v,\varphi_b]= [\sigma^{u+va},\varphi_{ab}].\]
For any non-negative integer $k$, let us further define
\[ S(a,k) =\frac{a^k-1}{a-1}=1 + a + a^2 + \cdots + a^{k-1}.\]
Then powers in $\Hol(N)$ may be computed via the formula
\begin{equation}\label{eqn:power}
    [\sigma^u,\varphi_a]^{k} = [\sigma^{uS(a,k)},\varphi_{a^k}].
\end{equation}
For any integer $m$, define $v_p(m)$ to be the $p$-adic valuation of $m$, namely $p^{v_p(m)}$ is the exact power of $p$ dividing $m$, and $v_p(0) =\infty$ by convention.

\begin{lemma}\label{lem:valuation} Let $a$ be an integer with $a\equiv 1\pmod{p}$. %For any non-negative integer $k$, we have
\begin{enumerate}[$(a)$]
\item If $p$ is odd, then for any non-negative integer $k$, we have
\begin{equation}\label{eqn:valuation}
v_p(S(a,k)) = v_p(k).
\end{equation}
\item If $p=2$, then for any non-negative integer $k$, we have
\begin{equation}\label{eqn:valuation2}
    v_2(S(a,k)) =
    \begin{cases}
    v_2(k)&\mbox{if $a\equiv1\hspace{-2mm}\pmod{4}$ or $k$ is odd},\\
    v_2(k) + v_2(\frac{a+1}{2}) &\mbox{if $a\equiv3\hspace{-2mm}\pmod{4}$ and $k$ is even}.
    \end{cases}
\end{equation}
\end{enumerate}
\end{lemma}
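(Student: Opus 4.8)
The plan is to reduce both parts to one multiplicative identity plus two short congruences, and then to induct on the exact power of $p$ dividing $k$. The backbone is the factorisation
\[ S(a,mk) = S(a^k,m)\cdot S(a,k), \]
valid for all non-negative integers $m,k$ and immediate from $\tfrac{a^{mk}-1}{a-1}=\tfrac{a^{mk}-1}{a^k-1}\cdot\tfrac{a^k-1}{a-1}$; taking valuations turns it into the additive relation $v_p(S(a,mk))=v_p(S(a^k,m))+v_p(S(a,k))$. The second ingredient I would record up front is the congruence $S(a,k)=1+a+\cdots+a^{k-1}\equiv k\pmod p$, which holds because $a\equiv 1\pmod p$ (for $p=2$ simply because $a$ is odd). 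This already settles the coprime case: if $p\nmid k$ then $S(a,k)\not\equiv 0\pmod p$, so $v_p(S(a,k))=0=v_p(k)$; in particular it disposes of the ``$k$ odd'' clause of (b) for every odd $a$. The degenerate inputs $k=0$ (both sides $\infty$) and $a=1$ (where $S(a,k)=k$ outright) are harmless and will serve as, or be subsumed by, the base cases.

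For part (a) I would prove a single one-step lemma: \emph{if $b\equiv 1\pmod p$ with $p$ odd, then $v_p(S(b,p))=1$.} Writing $b=1+pc$ gives $b^i\equiv 1+ipc\pmod{p^2}$, whence
\[ S(b,p)\equiv\sum_{i=0}^{p-1}(1+ipc)=p+pc\cdot\frac{p(p-1)}{2}\equiv p\pmod{p^2}, \]
the final congruence using that $\tfrac{p-1}{2}$ is an integer, so the correction term carries a factor $p^2$. Applying the backbone with $m=p$ and $b=a^k\equiv 1\pmod p$ now yields $v_p(S(a,pk))=1+v_p(S(a,k))$, and induction on $v_p(k)$, anchored by the coprime case, gives $v_p(S(a,k))=v_p(k)$.

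For part (b) I would split on $a\bmod 4$. If $a\equiv 1\pmod 4$, the argument copies part (a) verbatim, now with the even simpler factor $S(a^k,2)=1+a^k\equiv 2\pmod 4$, so that $v_2(S(a,2k))=1+v_2(S(a,k))$ and the induction again gives $v_2(S(a,k))=v_2(k)$. The substantive case is $a\equiv 3\pmod 4$ with $k=2k_1$ even. Here $a^2\equiv 1\pmod 4$, so I would invoke the backbone in the form $S(a,2k_1)=(1+a)\,S(a^2,k_1)$ (take $m=k_1$, $k=2$) and feed $a^2$ into the already-proved $a\equiv 1\pmod 4$ case to get
\[ v_2(S(a,2k_1))=v_2(1+a)+v_2(S(a^2,k_1))=v_2(1+a)+v_2(k_1). \]
Rewriting $v_2(1+a)=v_2(\tfrac{a+1}{2})+1$ and $v_2(k_1)=v_2(k)-1$ produces the stated formula.

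The main obstacle --- and exactly the source of the asymmetry between (a) and (b) --- is the mod-$p^2$ step in the one-step lemma. For odd $p$ the quadratic correction $pc\cdot\tfrac{p(p-1)}{2}$ is divisible by $p^2$ because $2\mid p-1$, so $v_p(S(b,p))=1$ unconditionally; for $p=2$ the analogue $\tfrac{p(p-1)}{2}=1$ is a unit, which is why $a\equiv 3\pmod 4$ cannot be handled directly and must instead be squared into the $a\equiv 1\pmod 4$ regime, at the cost of the extra term $v_2(\tfrac{a+1}{2})$. Keeping track of that correction, and checking that the squaring trick genuinely lands in a case already proved, are the only points requiring care.
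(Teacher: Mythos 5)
Your proof is correct. For the only case the paper argues in detail --- $p=2$ with $a\equiv 3\pmod 4$ and $k$ even --- you use exactly the paper's device, namely the factorisation $S(a,k)=(a+1)\,S(a^2,k/2)$ reducing to the $a\equiv 1\pmod 4$ regime, and the odd-$k$ subcase via $S(a,k)\equiv k\pmod 2$ is likewise identical. The difference is that the paper outsources part (a) and the $a\equiv 1\pmod 4$ case of part (b) to the literature (Rump, Tsang, Campedel--Caranti--Del Corso), whereas you supply self-contained proofs via the multiplicative identity $S(a,mk)=S(a^k,m)\,S(a,k)$, the one-step computation $v_p(S(b,p))=1$ modulo $p^2$, and induction on $v_p(k)$. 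Your version is therefore slightly longer but fully self-contained, and your closing remark correctly isolates why the argument bifurcates at $p=2$: the quadratic correction term $pc\cdot\frac{p(p-1)}{2}$ is divisible by $p^2$ only when $p$ is odd. No gaps.
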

\begin{proof} For $p$ odd and for $p=2 $ with $a\equiv1\pmod{4}$, one can find proofs in \cite[Lemma 4]{Rump1}, \cite[Lemma 2.1]{Tsang}, or \cite[Lemma 2.17]{CCC}, for example (\cite{CCC} only treats the odd case). For $p=2$ with $a\equiv 3\pmod{4}$, suppose first that $k$ is odd. Then
\[
S(a,k) \equiv \sum_{i=0}^{k-1}a^i \equiv \sum_{i=0}^{k-1}1\equiv k\hspace{-2mm}\pmod{2}.
\]
This means that $S(a,k)$ is also odd, namely $v_2(S(a,k)) = 0 = v_2(k)$. Suppose now that $k$
is even. Then $\frac{k}{2}$ is an integer. Since $a^2\equiv 1\pmod{4}$ and
\begin{align*}
    S(a,k) &= \frac{a^k-1}{a-1}= \frac{(a^2)^{\frac{k}{2}}-1}{a^2-1}\cdot (a+1) = S(a^2,\textstyle\frac{k}{2}) (a+1),
\end{align*}
we deduce that 
\[ v_2(S(a,k)) = v_2(S(a^2,\textstyle\frac{k}{2})) + v_2(a+1) = v_2(\textstyle\frac{k}{2}) + v_2(a+1),\]
which equals the expression in \eqref{eqn:valuation2}. %Suppose now that $k$ is odd. Then $k+1$ is even. Since $S(a,k+1)$ is even by the above, we see that
%\[ S(a,k) = S(a,k+1) - a^k\]
%is odd, namely $v_2(S(a,k)) = 0 = v_2(k)$, as desired.
\end{proof}

In view of \eqref{eqn:power}, the order of an element $[\sigma^u,\varphi_a]$, where $a\equiv 1\pmod{p}$, of $\Hol(N)$ may be determined using Lemma \ref{lem:valuation}, as follows. 

\begin{lemma}\label{lem:order} Let $a$ be an integer with $a\equiv 1\pmod{p}$.
\begin{enumerate}[$(a)$]
\item If $p$ is odd, then for any integer $u$, we have
\begin{equation}\label{eqn:orderp}
|[\sigma^u,\varphi_a]| = \max\{ p^{e-v_p(u)}, |\varphi_a|\}.
\end{equation}
\item If $p=2$, then for any integer $u$, we have
\begin{equation}\label{eqn:order}
|[\sigma^u,\varphi_a]| = \begin{cases}
    \max\{ 2^{e-v_2(u)}, |\varphi_a|\} & \mbox{if $a\equiv1\hspace{-2mm}\pmod{4}$},\\
    \max\{2^{e-v_2(u)-v_2(\frac{a+1}{2})},|\varphi_a|\}&\mbox{if }a\equiv3\hspace{-2mm}\pmod{4}.
\end{cases} 
\end{equation}
\end{enumerate}
\end{lemma}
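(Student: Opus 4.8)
The plan is to combine the power formula \eqref{eqn:power} with the valuation estimates of Lemma \ref{lem:valuation}. Writing $[\sigma^u,\varphi_a]^k = [\sigma^{uS(a,k)},\varphi_{a^k}]$, this equals the identity exactly when $\varphi_{a^k}$ is trivial and $\sigma^{uS(a,k)}$ is trivial, i.e. when $|\varphi_a|$ divides $k$ and $v_p(u) + v_p(S(a,k)) \geq e$. Thus $|[\sigma^u,\varphi_a]|$ is the least positive integer $k$ meeting both conditions simultaneously, and the whole proof amounts to minimising $k$ subject to these two constraints.

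First I would record that $m := |\varphi_a|$ is a power of $p$: for odd $p$ this is because $\varphi_a$ lies in the Sylow $p$-subgroup of the cyclic group $\Aut(N)$, and for $p=2$ it is because $\Aut(N)$ is itself a $2$-group. Write $m = p^j$. In the two cases where Lemma \ref{lem:valuation} gives $v_p(S(a,k)) = v_p(k)$ --- namely $p$ odd, and $p=2$ with $a \equiv 1 \pmod 4$ --- the second constraint reads $v_p(k) \geq e - v_p(u)$, while the first reads $v_p(k) \geq j$. Both are lower bounds on $v_p(k)$, so the least admissible $k$ is $p^{\max\{j,\, e-v_p(u)\}} = \max\{p^{e-v_p(u)}, m\}$, which is \eqref{eqn:orderp} and the first branch of \eqref{eqn:order}. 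The degenerate case $\sigma^u = 1$ (so $v_p(u) \geq e$) is absorbed by the convention that then $p^{e-v_p(u)} \leq 1$, so the maximum returns $m$ as it should.

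The remaining case $p=2$ with $a \equiv 3 \pmod 4$ is where the parity-dependence of \eqref{eqn:valuation2} must be confronted, and this is the main point. Here I would first observe that $m = |\varphi_a|$ is even: since $a \equiv 3 \pmod 4$, for odd $k$ we have $a^k \equiv 3 \pmod 4$, so $\varphi_{a^k}$ cannot be trivial, and hence every $k$ satisfying the first constraint is even. For all such $k$ the second branch of \eqref{eqn:valuation2} applies uniformly, giving $v_2(S(a,k)) = v_2(k) + v_2(\tfrac{a+1}{2})$. Writing $\delta = v_2(\tfrac{a+1}{2})$, the second constraint becomes $v_2(k) \geq e - v_2(u) - \delta$, and exactly as before the least admissible $k$ is $2^{\max\{j,\, e-v_2(u)-\delta\}} = \max\{2^{\,e-v_2(u)-v_2(\frac{a+1}{2})}, m\}$, which is the second branch of \eqref{eqn:order}.

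The only genuine obstacle is this last case: one must justify using the even branch of \eqref{eqn:valuation2} throughout the minimisation, and the observation that $|\varphi_a|$ is even --- forcing every candidate exponent $k$ to be even --- is precisely what licenses this. Everything else is a routine minimisation of $v_p(k)$ against two lower bounds, together with the monotonicity $p^{\max\{A,B\}} = \max\{p^A, p^B\}$.
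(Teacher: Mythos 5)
Your proof is correct and follows essentially the same route as the paper: combine the power formula \eqref{eqn:power} with Lemma \ref{lem:valuation}, note that $|\varphi_a|$ is a power of $p$, and minimise the admissible exponent. The only difference is that you spell out the parity issue in the $a\equiv 3\pmod 4$ case (observing that $|\varphi_a|$ even forces every candidate exponent to be even), a point the paper leaves implicit.
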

\begin{proof} Note that $|\varphi_a|$ is a power of $p$ because $a\equiv 1\pmod{p}$. Thus, we deduce from \eqref{eqn:power} that the order of $[\sigma^u,\varphi_a]$ is equal to $\max\{p^f,|\varphi_a|\}$, where $f$ denotes the smallest non-negative integer for which 
\[ uS(a,p^f) \equiv 0\hspace{-2mm}\pmod{p^e},\mbox{ namely }v_p(S(a,p^f)) = e- v_p(u).\]
The claim now follows from Lemma \ref{lem:valuation}.
\end{proof}

Let $G$ be a transitive subgroup of $\Hol(N)$ and let $H$ be any subgroup of $G$ of index $p^e$ with $C=\mathrm{Core}_G(H)$. To prove our theorems, we need a method to decide whether $G/C$ is isomorphic to a transitive subgroup of $\Hol(N)$ under an isomorphism that takes $H/C$ to the stabiliser. In some cases $G/C$ is not even isomorphic to a transitive subgroup of $\Hol(N)$ because its elements have small orders. The next lemma is helpful when dealing with such cases.

\begin{lemma}\label{lem:intersection} Let $G$ be any subgroup of $\Hol(N)$ and let $H$ be any subgroup of $G$ with $C=\mathrm{Core}_G(H)$.
\begin{enumerate}[$(a)$]
\item If $|H\cap N| \geq p$, then $G/C$ has no element of order $p^{e}$.
\item If $|H\cap N| \geq 4$ and $p=2$, then $G/C$ has no element of order $2^{e-1}$.
\item If $|H\cap N| = 2$ and $p=2$, then $G/C$ has an element of order $2^{e-1}$ exactly when $G$ has an element of order $2^{e}$. 
\end{enumerate}
\end{lemma}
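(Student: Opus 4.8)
The plan is to exploit the cyclic structure of $N$ together with the power formula \eqref{eqn:power} and the valuation computations of Lemma \ref{lem:valuation}. For each $1\le j\le e$, write $N_j=\langle\sigma^{p^{e-j}}\rangle$ for the unique subgroup of $N$ of order $p^j$. The first observation is that whenever $|H\cap N|\ge p^j$, the cyclic group $H\cap N$ contains $N_j$; since $N_j$ is characteristic in $N$ and $N$ is normal in $\Hol(N)$, the subgroup $N_j$ is normal in $G$ and contained in $H$, hence $N_j\subseteq C$. In particular, under the hypothesis of $(a)$ we have $N_1\subseteq C$, under that of $(b)$ we have $N_2\subseteq C$, and under that of $(c)$ we have $N_1\subseteq C$ together with $C\cap N=N_1$ (because $C\subseteq H$ forces $C\cap N\subseteq H\cap N=N_1$). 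In every part the strategy is the same: push a suitable power of an arbitrary element into one of the small characteristic subgroups $N_1$ or $N_2$, and read off the order of its image in $G/C$.

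For $(a)$ I would first reduce to $p$-elements: if $\bar g=gC$ has order $p^e$, then replacing $g$ by $g^m$ with $m$ the $p'$-part of $|g|$ yields a $p$-element whose image still has order $p^e$. For a $p$-element $g=[\sigma^u,\varphi_a]$ the automorphism part $\varphi_a$ is a $p$-element, so $a\equiv 1\pmod p$ and $\varphi_a^{p^{e-1}}=1$; then \eqref{eqn:power} gives $g^{p^{e-1}}=[\sigma^{uS(a,p^{e-1})},1]\in N$, and Lemma \ref{lem:valuation} shows $v_p(S(a,p^{e-1}))=e-1$, so $g^{p^{e-1}}$ has order at most $p$ and lies in $N_1\subseteq C$. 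Hence every $p$-element has image of order at most $p^{e-1}$, contradicting the assumed order $p^e$. Part $(b)$ follows the same template with $2^{e-2}$ in place of $p^{e-1}$: for $e\ge 3$ the exponent of $\Aut(N)\cong C_2\times C_{2^{e-2}}$ equals $2^{e-2}$, so $\varphi_a^{2^{e-2}}=1$ for every $g=[\sigma^u,\varphi_a]\in G$, whence $g^{2^{e-2}}=[\sigma^{uS(a,2^{e-2})},1]\in N$; Lemma \ref{lem:valuation}$(b)$ gives $v_2(S(a,2^{e-2}))\ge e-2$ in both parity cases for $a$, so $g^{2^{e-2}}$ has order dividing $4$ and lies in $N_2\subseteq C$, forcing every image to have order at most $2^{e-2}<2^{e-1}$.

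For $(c)$ I would argue both implications through the order of a lift. Since $\Hol(N)$ is a $2$-group and, by Lemma \ref{lem:order}$(b)$, every element has order at most $2^e$, a lift $g$ of an order-$2^{e-1}$ element $\bar g$ satisfies $|g|\in\{2^{e-1},2^e\}$. If $|g|=2^{e-1}$ then $g^{2^{e-2}}$ has order $2$, and the computation from $(b)$ shows it lies in $N$; the unique order-$2$ element of $N$ generates $N_1\subseteq C$, contradicting that $\bar g$ has order $2^{e-1}$. Hence $|g|=2^e$, which is the forward direction. Conversely, if $g=[\sigma^u,\varphi_a]$ has order $2^e$ then $u$ is odd and $a\equiv 1\pmod 4$ by Lemma \ref{lem:order}$(b)$; then $g^{2^{e-1}}=\sigma^{2^{e-1}}\in N_1\subseteq C$, while $g^{2^{e-2}}$ is an element of $N$ of order $4$, so it lies in $N\setminus N_1$ and, because $C\cap N=N_1$, not in $C$. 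Thus $\bar g$ has order exactly $2^{e-1}$.

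The main obstacle is not any single computation but keeping the three quotient-order bookkeepings consistent, and in particular the boundary case $e=2$, where $\Aut(N)\cong C_2$ has exponent $2>2^{e-2}=1$ and the clean step $\varphi_a^{2^{e-2}}=1$ fails. For parts $(b)$ and $(c)$ I would treat $e=2$ by hand: part $(b)$ is degenerate since $|H\cap N|\ge 4=|N|$ forces $N\subseteq C$, and in the intended application $H$ has index $p^e$ in a transitive $G$, so this configuration does not arise; part $(c)$ for $e=2$ is checked directly using that an order-$4$ lift necessarily lies in $N$. The only genuine arithmetic input throughout is the valuation identity of Lemma \ref{lem:valuation}, which is exactly what guarantees that the relevant powers land inside the characteristic subgroups $N_1$ and $N_2$ irrespective of the residue class of $a$.
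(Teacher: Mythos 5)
Your proof is correct for $e\geq 3$ and follows essentially the same route as the paper: observe that $H\cap N\subseteq C$ because the subgroups of $N$ are characteristic, then use \eqref{eqn:power} and Lemma \ref{lem:valuation} to push $[\sigma^u,\varphi_a]^{p^{e-1}}$ into $\langle\sigma^{p^{e-1}}\rangle$ and $[\sigma^u,\varphi_a]^{2^{e-2}}$ into $\langle\sigma^{2^{e-2}}\rangle$. Your part (c) is phrased via the order of a lift rather than the paper's direct comparison of the two equivalences ``$[\sigma^u,\varphi_a]^{2^{e-2}}\in\langle\sigma^{2^{e-1}}\rangle$ iff $a\equiv 3\pmod 4$ or $u$ even'' versus ``$|[\sigma^u,\varphi_a]|=2^e$ iff $a\equiv 1\pmod 4$ and $u$ odd'', but the underlying computation is identical. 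The one substantive divergence is your treatment of $e=2$: you are right that the step $\varphi_a^{2^{e-2}}=1$ fails there (a point the paper glosses over), but your proposed patch for (c) does not go through, because for general (non-transitive) $G$ parts (b) and (c) are actually false at $e=2$ --- e.g.\ $G=\Hol(C_4)$, $H=N$ violates (b), and $G=\langle\sigma^2,\varphi_{-1}\rangle\simeq C_2\times C_2$, $H=\langle\sigma^2\rangle$ violates (c), since $G/C\simeq C_2$ has an element of order $2^{e-1}$ while $G$ has none of order $2^e$. So ``checked directly'' cannot be carried out as stated; the honest fix is to restrict (b) and (c) to $e\geq 3$ (or to the transitive, index-$2^e$ configurations in which they are applied, where $e=2$ either cannot occur or is verified by inspection). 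This defect is inherited from the paper's own proof and is harmless in every application, but your write-up should not claim the $e=2$ case of (c) in full generality.
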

\begin{proof} Since the subgroups of $N$ are all characteristic, clearly $H\cap N$ lies in $C$. Let $[\sigma^u,\varphi_a]\in G$ be an element whose order in $G/C$ is a power of $p$. By raising it to a suitable power, we may assume that the order of $[\sigma^u,\varphi_a]$ in $G$ is also a power of $p$, that is $a\equiv1\pmod{p}$, without affecting its order in $G/C$. Notice that then $\varphi_a^{p^{e-1}} =1$, and $\varphi_{a}^{2^{e-2}}=1$ when $p=2$. 

\medskip

It follows immediately from \eqref{eqn:power} and Lemma \ref{lem:valuation} that
\begin{align*}
    [\sigma^u,\varphi_a]^{p^{e-1}} &=\sigma^{uS(a,p^{e-1})} \in \langle \sigma^{p^{e-1}}\rangle,\\
    [\sigma^u,\varphi_a]^{2^{e-2}} &=\sigma^{uS(a,2^{e-2})} \in \langle \sigma^{2^{e-2}}\rangle \mbox{ when }p=2,
\end{align*} 
which imply (a) and (b), respectively. Now, suppose that $p=2$. Similarly, we may deduce from  \eqref{eqn:valuation2} that 
\[[\sigma^u,\varphi_a]^{2^{e-2}}  =\sigma^{uS(a,2^{e-2})} \in \langle \sigma^{2^{e-1}}\rangle \iff a\equiv 3\hspace{-2mm}\pmod{4}\mbox{ or }u\mbox{ is even}.\]
But from \eqref{eqn:order}, we also know that
\[ |[\sigma^u,\varphi_a]| = 2^e \iff a\equiv 1\hspace{-2mm}\pmod{4}\mbox{ and $u$ is odd}.\]
The two implications above together yield (c).
\end{proof}

\section{Odd prime power case}

In this section, we assume that $p$ is an odd prime.

\begin{lemma}\label{lem:transitivep} A transitive subgroup $G$ of $\Hol(N)$ has an element of order $p^e$.
\end{lemma}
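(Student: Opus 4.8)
The plan is to exploit the fact that $G$ is a transitive subgroup of $\Hol(N)$ together with Lemma \ref{lem:Q part}, which tells us that $G\cap Q$ is also transitive, where $Q$ is the Sylow $p$-subgroup of $\Hol(N)$ (since $p$ is odd, the Hall $\pi$-subgroup $Q$ is precisely the Sylow $p$-subgroup $N\rtimes P$, with $P$ the Sylow $p$-subgroup of $\Aut(N)$ consisting of the $\varphi_a$ with $a\equiv 1\pmod p$). Having reduced to $G\cap Q$, I may assume from the outset that $G$ itself is a $p$-group sitting inside $Q=N\rtimes P$, and it suffices to produce an element of order $p^e$ in this smaller $G$.

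First I would record what transitivity buys us. Since $G\cap Q$ is transitive and $|N|=p^e$, the orbit of $1_N$ under $G\cap Q$ has size $p^e$, so $|G\cap Q|$ is divisible by $p^e$ and in particular $G\cap Q$ contains an element $[\sigma^u,\varphi_a]$ whose first coordinate is a generator of $N$, i.e. with $p\nmid u$, so that $v_p(u)=0$. Concretely, transitivity forces some element of $G\cap Q$ to send $1_N=G'$ to $\sigma$, and such an element has the form $[\sigma^u,\varphi_a]$ with $u$ a unit mod $p$. The point is now purely computational: by Lemma \ref{lem:order}(a), for any element $[\sigma^u,\varphi_a]$ of $Q$ (so $a\equiv 1\pmod p$) we have
\[
|[\sigma^u,\varphi_a]| = \max\{p^{e-v_p(u)},|\varphi_a|\},
\]
and taking $v_p(u)=0$ gives order at least $p^{e-0}=p^e$. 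Since every element of the $p$-group $G\cap Q\leq \Hol(N)$ has order dividing $p^e$ (the exponent of $Q$ being $p^e$ for $p$ odd), the order is exactly $p^e$. Thus $[\sigma^u,\varphi_a]$ is the desired element, and it lies in $G\cap Q\subseteq G$.

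The genuinely substantive inputs are therefore already available: Lemma \ref{lem:Q part}(a) handles the reduction to the Sylow $p$-part, and Lemma \ref{lem:order}(a), itself resting on the valuation formula of Lemma \ref{lem:valuation}(a), converts transitivity into an explicit order computation. The only step requiring a little care is the extraction of an element with $v_p(u)=0$ from transitivity; I expect this to be the main (though modest) obstacle. One must argue that transitivity yields not merely an element mapping $1_N$ somewhere nontrivial, but specifically one whose $N$-component is a \emph{generator}. This follows because the $G\cap Q$-orbit of $1_N$ is all of $N$, so some $g\in G\cap Q$ satisfies $g\cdot 1_N = \sigma$; writing $g=[\sigma^u,\varphi_a]$ and unwinding the action shows $u\equiv 1\pmod p$ up to the identification of the coset space with $N$, giving $v_p(u)=0$ as needed. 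It is worth emphasising that the oddness of $p$ enters precisely through Lemma \ref{lem:order}(a): for $p=2$ the analogous formula \eqref{eqn:order} contains the correction term $v_2(\frac{a+1}{2})$ when $a\equiv 3\pmod 4$, which can depress the order below $2^e$, and this is exactly why the even case behaves differently and requires the separate analysis of Theorems \ref{thm:even} and \ref{thm:H}.
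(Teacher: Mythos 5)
Your proposal is correct and follows essentially the same route as the paper: reduce to the Sylow $p$-part $G\cap Q$ via Lemma \ref{lem:Q part}(a), use transitivity to extract an element $[\sigma^u,\varphi_a]$ with $v_p(u)=0$ and $a\equiv 1\pmod p$, and conclude via the order formula \eqref{eqn:orderp} that its order is $p^e$. The only cosmetic difference is that the paper caps the order by noting $|\varphi_a|\leq p^{e-1}$ directly rather than appealing to the exponent of $Q$.
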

\begin{proof}
We may assume that $G$ is a $p$-group by Lemma \ref{lem:Q part}. By transitivity, we know that $G$ has an element of the form $[\sigma,\varphi_a]$, where $a\equiv 1\pmod{p}$ because $G$ is a $p$-group. Since $|\varphi_a| \leq p^{e-1}$, we see from \eqref{eqn:orderp} that $[\sigma,\varphi_a]$ has order $p^e$.
\end{proof}

\begin{proposition}\label{prop:odd} Let $G$ be a transitive subgroup of $\Hol(N)$ and let $H$ be any subgroup of $G$ of index $p^e$ with $C = \mathrm{Core}_G(H)$. The following are equivalent:
\begin{enumerate}[$(1)$]
\item $G/C$ is isomorphic to a transitive subgroup $T$ of $\Hol(N)$ under an isomorphism that sends $H/C$ to the stabiliser $\mathrm{Stab}_T(1_N)$.
\item $H$ is conjugate to $\mathrm{Stab}_G(1_N)$ in $G$.
\end{enumerate}
\end{proposition}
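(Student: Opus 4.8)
The plan is to prove the two implications separately, with $(2)\Rightarrow(1)$ being immediate and $(1)\Rightarrow(2)$ carrying all the content. For $(2)\Rightarrow(1)$, suppose $H = g\,\mathrm{Stab}_G(1_N)g^{-1}$ for some $g\in G$. Since $G$ acts faithfully on $N$ (as $G\leq\Hol(N)$), the core $\mathrm{Core}_G(\mathrm{Stab}_G(1_N))$ is trivial, and hence so is $C=\mathrm{Core}_G(H)$. Then conjugation by $g^{-1}$ is an automorphism of $G/C=G$ onto the transitive subgroup $T=G$ of $\Hol(N)$ carrying $H$ to $\mathrm{Stab}_G(1_N)=\mathrm{Stab}_T(1_N)$; this is exactly the observation recorded after Question \ref{Q:HSG'}.

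For $(1)\Rightarrow(2)$, I would first extract that $H\cap N=1$. Assuming $(1)$, the group $G/C$ is isomorphic to a transitive subgroup $T$ of $\Hol(N)$, and by Lemma \ref{lem:transitivep} such a $T$ has an element of order $p^e$; thus so does $G/C$. On the other hand, Lemma \ref{lem:intersection}(a) shows that $|H\cap N|\geq p$ would force $G/C$ to have no element of order $p^e$. Hence $H\cap N=1$, and it suffices to prove the purely group-theoretic claim: \emph{any subgroup $H$ of a transitive $G\leq\Hol(N)$ with $[G:H]=p^e$ and $H\cap N=1$ is conjugate in $G$ to $\mathrm{Stab}_G(1_N)$.} To prove this I would reduce to a $p$-group. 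Writing $Q$ for the unique Sylow $p$-subgroup of $\Hol(N)$, Lemma \ref{lem:Q part}(a) makes $\bar G:=G\cap Q$ transitive, Lemma \ref{lem:Q part}(b) gives $[\bar G:H\cap Q]=[G:H]=p^e$, and one checks $\mathrm{Stab}_G(1_N)\cap Q=\mathrm{Stab}_{\bar G}(1_N)$ while $(H\cap Q)\cap N=H\cap N=1$. By Lemma \ref{lem:Q part'}, applied to the index-$p^e$ subgroups $H$ and $\mathrm{Stab}_G(1_N)$, the conjugacy of $H$ with $\mathrm{Stab}_G(1_N)$ in $G$ is equivalent to that of $H\cap Q$ with $\mathrm{Stab}_{\bar G}(1_N)$ in $G$, and it is enough to produce the latter conjugacy inside $\bar G$. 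So I may assume $G=\bar G$ is a $p$-group.

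The crux is to show that in the $p$-group $\bar G$ the subgroup $\bar H:=H\cap Q$ fixes a point of $N$. Since $\bar H\cap N=1$, the projection $[\sigma^u,\varphi_a]\mapsto\varphi_a$ embeds $\bar H$ into $\Aut(N)$, which is cyclic for odd $p$; hence $\bar H=\langle[\sigma^{\mu_0},\varphi_{\alpha_0}]\rangle$ is cyclic of order $p^n:=|\bar G|/p^e$, with $|\varphi_{\alpha_0}|=p^n$ as well, so that $v_p(\alpha_0-1)=e-n$ (the order of $\varphi_{\alpha_0}$ in $\Aut(N)$ being $p^{\,e-v_p(\alpha_0-1)}$ for odd $p$, by the valuation computation in Lemma \ref{lem:valuation}). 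Applying Lemma \ref{lem:order}(a) to the generator yields
\[
|[\sigma^{\mu_0},\varphi_{\alpha_0}]|=\max\{p^{\,e-v_p(\mu_0)},\,p^n\}=p^n,
\]
which forces $v_p(\mu_0)\geq e-n=v_p(\alpha_0-1)$. Consequently the congruence $(\alpha_0-1)t\equiv-\mu_0\pmod{p^e}$ is solvable, and for such a $t$ the generator satisfies $[\sigma^{\mu_0},\varphi_{\alpha_0}]\cdot\sigma^t=\sigma^{\mu_0+\alpha_0 t}=\sigma^t$ by the action in \eqref{eqn:power}. Thus $\bar H\leq\mathrm{Stab}_{\bar G}(\sigma^t)$, and since $|\mathrm{Stab}_{\bar G}(\sigma^t)|=|\bar G|/p^e=p^n=|\bar H|$, equality holds. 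As $\bar G$ is transitive, $\mathrm{Stab}_{\bar G}(\sigma^t)$ is conjugate in $\bar G$ to $\mathrm{Stab}_{\bar G}(1_N)$, which proves the claim.

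I expect the last paragraph to be the main obstacle. Everything hinges on the exact value $v_p(\alpha_0-1)=e-n$ produced by the odd-$p$ order formula, together with the cyclicity of $\Aut(N)$ that makes $\bar H$ cyclic with a single generator to analyse; it is this rigidity that upgrades ``$H\cap N=1$'' all the way to ``$H$ is a point stabiliser''. (For $p=2$ the order formula Lemma \ref{lem:order}(b) carries the extra term $v_2(\tfrac{a+1}{2})$ and $\Aut(N)$ is no longer cyclic, which is precisely why the even case must be handled separately and behaves differently.)
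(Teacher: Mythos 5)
Your proposal is correct. The framework is the same as the paper's: the trivial direction, the deduction $H\cap N=1$ from Lemmas \ref{lem:intersection}(a) and \ref{lem:transitivep}, and the reduction to a $p$-group via Lemmas \ref{lem:Q part} and \ref{lem:Q part'}. Where you genuinely diverge is the endgame. The paper, having written $H=\langle[\sigma^u,\varphi_a]\rangle$ and derived $v_p(u)\geq e-s$, conjugates by a transitivity-supplied element $[\sigma^v,\varphi_b]$ with $v(1-a)\equiv-u\pmod{p^e}$ to replace $u$ by $u(b-1)$, and iterates this valuation-raising step until $H$ lands on $\langle\varphi_a\rangle=\mathrm{Stab}_G(1_N)$. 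You instead use the same congruence $(\alpha_0-1)t\equiv-\mu_0\pmod{p^e}$ (solvable for the same valuation reason) to exhibit a literal fixed point $\sigma^t$ of the generator, conclude $\bar H=\mathrm{Stab}_{\bar G}(\sigma^t)$ by the order count $|\bar H|=|\bar G|/p^e$, and finish with the standard fact that point stabilisers of a transitive action are conjugate. This is a one-step argument that sidesteps the iteration: the paper cannot simply conjugate by $\sigma^{-t}$ since that element need not lie in $G$, whereas your point-stabiliser formulation makes the issue disappear. Both proofs rest on exactly the same two pillars --- cyclicity of $\Aut(N)$ for odd $p$ (forcing $H$ cyclic) and the valuation identity of Lemma \ref{lem:valuation}(a) (forcing $v_p(\mu_0)\geq v_p(\alpha_0-1)$) --- and your closing remark correctly identifies these as precisely what fails for $p=2$.
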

\begin{proof} The implication (2)$\Rightarrow$(1) is trivial. Conversely, suppose that (1) holds. By Lemmas \ref{lem:intersection}(a) and \ref{lem:transitivep}, we know that $H\cap N=1$. Then $(H\cap Q)\cap N=1$, where $Q$ is the unique Hall $p$-subgroup of $\Hol(N)$. By Lemma \ref{lem:Q part'}, it is enough to show that $H\cap Q$ is conjugate to $\mathrm{Stab}_{G\cap Q}(1_N)$ in $G$. In view of Lemma \ref{lem:Q part}, replacing $G$ and $H$ by $G\cap Q$ and $H\cap Q$, respectively, we may assume that $G$ is a $p$-group. This means that $b\equiv 1\pmod{p}$ for all $[\sigma^v,\varphi_b]\in G$, and we can also put $|G'| = |H| = p^s$, where $G' =\mathrm{Stab}_G(1_N)$. 

%Moreover, we can put $G'=\mathrm{Stab}_G(1_N)$ and $|G'| = |H| = p^s$. 

\medskip

The projection of $H$ onto $\Aut(N)$ is isomorphic to $H$ because $H\cap N=1$. Since $\Aut(N)$ is cyclic, we see that $H = \langle [\sigma^u,\varphi_a]\rangle$ with $a\equiv 1+p^{e-s}\pmod{p^e}$. Since $[\sigma^u,\varphi_a]$ has order $p^s$, it also follows from \eqref{eqn:power} and \eqref{eqn:valuation} that
\[ uS(a,p^{s})\equiv0\hspace{-2mm}\pmod{p^e}, \mbox{ and hence } u\equiv 0\hspace{-2mm}\pmod{p^{e-s}}.\]
We then deduce that there exists $v$ such that
\[ v(1-a) \equiv -u\hspace{-2mm}\pmod{p^e}.\]
Since $G$ is transitive, we can find $[\sigma^v,\varphi_b] \in G$, and we have
\[ [\sigma^v,\varphi_b][\sigma^u,\varphi_a][\sigma^{v},\varphi_b]^{-1} 
= [\sigma^{v(1-a) +ub},\varphi_a] = [\sigma^{u(b-1)},\varphi_a].
\]
The important thing to observe here is that
\[v_p(u(b-1)) > v_p(u)\]
because $b\equiv 1\pmod{p}$. Therefore, by repeating this process, we see that $H$ is conjugate to $\langle\varphi_a\rangle$. But $|H| = |G'|$, so necessarily $G ' =\langle\varphi_a\rangle$, and this completes the proof.
\end{proof}

\section{Even prime power case}

In this section, we assume that $p=2$. 

\begin{lemma}\label{lem:transitive2}
        A transitive subgroup $G$ of $\Hol(N)$ has an element of order $2^{e-1}$. Moreover, in the case that $G$ has no element of order $2^e$, we have 
        \begin{equation}\label{eqn:ua cong}
        b-1\equiv 2v\hspace{-2mm}\pmod{4}
        \end{equation}
        for all $[\sigma^v,\varphi_b]\in G$, and in particular $\mathrm{Stab}_G(1_N)$ is contained in $\langle\varphi_5\rangle$.
\end{lemma}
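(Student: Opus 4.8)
The plan is to exploit transitivity to locate specific elements of $G$ and then read off their orders directly from Lemma \ref{lem:order}(b). Since the orbit of $1_N$ under $G$ is all of $N$ and $[\sigma^u,\varphi_a]$ sends $1_N$ to $\sigma^u$, transitivity guarantees that for every residue $w$ there is an element $[\sigma^w,\varphi_c]\in G$ for some odd $c$; in particular $[\sigma,\varphi_a]\in G$ for some odd $a$. I would first dispose of the case where $G$ has an element $g$ of order $2^e$: then $g^2$ has order $2^{e-1}$, which settles the first assertion, and the hypothesis of the ``moreover'' part is then vacuous. So the real content lies in the case where $G$ has no element of order $2^e$, and I treat that case for the remainder.

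In that case, applying Lemma \ref{lem:order}(b) to $[\sigma,\varphi_a]$, whose $\sigma$-exponent $1$ is odd, forces $a\equiv3\pmod4$, since $a\equiv1\pmod4$ would produce order $2^e$. The core step is then to establish the congruence \eqref{eqn:ua cong} for an arbitrary $[\sigma^v,\varphi_b]\in G$, and I would split according to the parity of $v$. When $v$ is odd, the absence of an element of order $2^e$ directly forces $b\equiv3\pmod4$, matching $2v\equiv2\pmod4$. When $v$ is even, I argue by contradiction: if $b\equiv3\pmod4$, then the product $[\sigma,\varphi_a][\sigma^v,\varphi_b]=[\sigma^{1+va},\varphi_{ab}]$ has odd $\sigma$-exponent $1+va$ (because $va$ is even) and satisfies $ab\equiv1\pmod4$, so it has order $2^e$ by Lemma \ref{lem:order}(b), contradicting the hypothesis. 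Hence $b\equiv1\pmod4$, matching $2v\equiv0\pmod4$, and \eqref{eqn:ua cong} holds in all cases.

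The remaining claims then follow quickly. Taking $v=0$ in \eqref{eqn:ua cong} shows every $[\sigma^0,\varphi_b]\in\mathrm{Stab}_G(1_N)$ has $b\equiv1\pmod4$, i.e. $\varphi_b\in\langle\varphi_5\rangle$. For the existence of an element of order $2^{e-1}$ in this case, I would again invoke transitivity to produce $[\sigma^2,\varphi_b]\in G$ (here $\sigma^2\ne1_N$ since $e\ge2$); as $v=2$ is even, \eqref{eqn:ua cong} gives $b\equiv1\pmod4$, so $\varphi_b\in\langle\varphi_5\rangle$ has order at most $2^{e-2}$, and Lemma \ref{lem:order}(b) yields order $\max\{2^{e-v_2(2)},|\varphi_b|\}=2^{e-1}$. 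I expect the main obstacle to be exactly the even-$v$ half of the congruence: unlike the odd-$v$ half, it does not follow formally from ``no element of order $2^e$'' but genuinely requires multiplying an arbitrary element by the transitivity witness $[\sigma,\varphi_a]$ to manufacture a forbidden order-$2^e$ element, and one must track the parity of $1+va$ and the residue of $ab$ modulo $4$ carefully.
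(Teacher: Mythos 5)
Your proposal is correct and follows essentially the same route as the paper: use transitivity to produce elements $[\sigma^w,\varphi_c]\in G$ with prescribed translation part, read off orders from Lemma \ref{lem:order}(b), and in the even-$v$ case multiply an arbitrary element by $[\sigma,\varphi_a]$ to manufacture a forbidden element of order $2^e$ (the paper uses $[\sigma,\varphi_a][\sigma^v,\varphi_b]^{-1}$ rather than the product, which is an immaterial difference). The only cosmetic divergence is that the paper exhibits the order-$2^{e-1}$ element directly as $[\sigma^{-1},\varphi_c]^{-1}[\sigma,\varphi_a]=[\sigma^{2c^{-1}},\varphi_{c^{-1}a}]$ before proving \eqref{eqn:ua cong}, whereas you derive the congruence first and then apply it to $[\sigma^2,\varphi_b]$; both are sound.
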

\begin{proof}Since $G$ is transitive, we can find $[\sigma,\varphi_a],[\sigma^{-1},\varphi_c]\in G$. 
\begin{enumerate}[$\bullet$]
\item If $a\equiv 1\pmod{4}$, then $[\sigma,\varphi_a]$ has order $2^e$ by \eqref{eqn:order}.
\item If $c\equiv 1\pmod{4}$, then $[\sigma^{-1},\varphi_c]$ has order $2^e$ by \eqref{eqn:order}.
\item If $a,c\equiv 3\pmod{4}$, then
\[ [\sigma^{-1},\varphi_c]^{-1}[\sigma,\varphi_a] = [\sigma^{2c^{-1}},\varphi_{c^{-1}a}]\in G\]
has order $2^{e-1}$ by \eqref{eqn:order} because $c^{-1}a\equiv 1\pmod{4}$.
\end{enumerate}
In all cases, we see that $G$ has an element of order $2^{e-1}$.

\medskip

Now, suppose that $G$ has no element of order $2^e$. Then $a\equiv 3\pmod{4}$ must hold, for otherwise $[\sigma,\varphi_a]$ has order $2^e$ by \eqref{eqn:order}. Let $[\sigma^v,\varphi_b]\in G$ be arbitrary. For $v$ odd, we have $b\equiv 3\pmod{4}$ for the same reason, and so
\[ b-1\equiv 2\equiv 2v \hspace{-2mm}\pmod{4}.\]
For $v$ even, we have $ab^{-1}\equiv 3\pmod{4}$ again for the same reason because
\[[\sigma,\varphi_a][\sigma^v,\varphi_b]^{-1} = [\sigma^{1-vab^{-1}},\varphi_{ab^{-1}}]\in G.\]
This means that $b\equiv 1\pmod{4}$, and so
\[ b-1\equiv 0\equiv 2v \pmod{4}.\]
We have therefore shown the congruence \eqref{eqn:ua cong}, and by taking $v=0$, we deduce that $\mathrm{Stab}_G(1_N)$ is contained in $\langle\varphi_5\rangle$.
\end{proof}

Unlike the odd prime power case, a transitive subgroup of $\Hol(N)$ need not have an element of order $2^{e}$, and similarly a regular subgroup of $\Hol(N)$ need not be cyclic. This is why the even prime power case is much more difficult. 

\begin{lemma}\label{lem:Rump} A group of order $2^e$ is isomorphic to a regular subgroup of $\Hol(N)$ if and only if it contains a cyclic subgroup of index $2$.%, except for the cyclic group of order $4$ when $e=2$. 
\end{lemma}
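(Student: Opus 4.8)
The plan is to prove the two implications separately: the forward (necessity) direction is immediate from the preceding lemma, whereas the converse (sufficiency) direction carries the real content.

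For the \emph{only if} direction, suppose $A\le\Hol(N)$ is a regular subgroup of order $2^e$. Being regular, $A$ is in particular transitive, so Lemma~\ref{lem:transitive2} produces an element $g\in A$ of order $2^{e-1}$. Then $\langle g\rangle$ is cyclic of order $2^{e-1}$, hence of index $2$ in $A$. This argument is uniform in $e\ge 2$ and uses nothing beyond Lemma~\ref{lem:transitive2}.

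For the \emph{if} direction, I would first recall the classification of the groups of order $2^e$ with a cyclic subgroup of index $2$: for $e\ge 3$ these are the cyclic group $C_{2^e}$, the abelian group $C_{2^{e-1}}\times C_2$, and the dihedral, generalized quaternion, semidihedral, and modular maximal-cyclic groups, while for $e=2$ they are $C_4$ and $C_2\times C_2$. It then suffices to realize each isomorphism type as an explicit regular subgroup of $\Hol(N)$. The cyclic type is $\lambda(N)=\langle[\sigma,\varphi_1]\rangle$. For a non-cyclic type one takes two generators $X,Y$, with $X$ of order $2^{e-1}$ realizing the index-$2$ cyclic subgroup and $Y$ completing it; the isomorphism class is pinned down by the pair of data $\bigl(r,\,Y^2\bigr)$, where $YXY^{-1}=X^{r}$ and $Y^2\in\{1,\,X^{2^{e-2}}\}$, both computed from \eqref{eqn:power} and \eqref{eqn:order}. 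Concretely, $r=1,\,Y^2=1$ gives $C_{2^{e-1}}\times C_2$; $r=-1,\,Y^2=1$ gives dihedral; $r=-1,\,Y^2=X^{2^{e-2}}$ gives generalized quaternion; and $r=-1+2^{e-2}$ or $r=1+2^{e-2}$ (with $Y^2=1$) gives semidihedral or modular. As a clean model case, the dihedral group is realized by $X=[\sigma^2,\varphi_1]$ and $Y=[\sigma,\varphi_{-1}]$: here $Y$ has order $2$ by \eqref{eqn:order}, one computes $YXY^{-1}=X^{-1}$ from \eqref{eqn:power}, and the $2^e$ elements $X^k$ and $X^kY$ have translation-coordinates $2k$ and $2k+1$, which exhaust $\mathbb{Z}/2^e\mathbb{Z}$, so the subgroup is regular.

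The main obstacle is this sufficiency direction, and specifically the need to control two things at once for each non-cyclic family: forcing the correct isomorphism class (the quaternion, semidihedral, and modular groups are separated only by the fine data $r$ and $Y^2$) while \emph{simultaneously} keeping the translation-coordinates distinct modulo $2^e$, so that regularity is preserved. Unlike the dihedral model, the admissible choices of $X$ and $Y$ for the remaining types depend delicately on $e$ --- for instance the naive commuting pair for $C_{2^{e-1}}\times C_2$ fails to commute once $e\ge 4$, forcing one to twist $X$ by a suitable power of $\varphi_5$ --- and the generalized quaternion case is the most rigid, since its unique involution constrains $Y^2$ to equal the central element $X^{2^{e-2}}$. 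I expect the bulk of the write-up to be the case-by-case verification of regularity via \eqref{eqn:power}. Finally, the degenerate small cases $e=2$ (and $e=3$, where several families coincide) would be checked directly; it is here that the statement together with its exceptional clause for $C_4$ at $e=2$ is confirmed, this being the boundary case not covered by the general non-cyclic constructions.
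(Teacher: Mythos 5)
Your plan is genuinely different from the paper's proof, which is a one-line reduction: regular subgroups of $\Hol(N)$ correspond to braces $(N,\cdot,\circ)$ with additive group $N$ by \cite[Theorem 4.2]{GV}, so the lemma is simply read off from Rump's classification of cyclic braces \cite[Theorem 3]{Rump1}. Your necessity argument is correct, complete, and more self-contained than that citation: a regular subgroup is in particular transitive, so Lemma \ref{lem:transitive2} hands you an element of order $2^{e-1}$ and hence a cyclic subgroup of index $2$.

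The sufficiency direction, however, has a genuine gap: you actually exhibit generators and verify regularity only for the cyclic and dihedral types. The abelian type $C_{2^{e-1}}\times C_2$ and the generalized quaternion, semidihedral, and modular maximal-cyclic types are precisely where the content of the lemma lies, and for these you supply only the target presentation data $(r,Y^2)$ together with the remark that the constructions ``depend delicately on $e$'' and that you ``expect'' the verifications to go through; that expectation \emph{is} the statement to be proved. To make the difficulty you yourself flag concrete: for the semidihedral type one cannot take $X=\sigma^2$, since any $[\sigma^w,\varphi_c]$ conjugating $\sigma^2$ to $\sigma^{2(-1+2^{e-2})}$ has $c\equiv -1+2^{e-2}\pmod{2^{e-1}}$, and for $e\geq 4$ no such $c$ satisfies $c^2\equiv 1\pmod{2^e}$, so the relation $Y^2=1$ fails and the translation part of $X$ must itself be twisted. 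Similarly, your treatment of the exceptional clause is only a promise of a ``direct check'': note that your own construction $\lambda(N)$ realises the cyclic group of order $4$ as a regular subgroup of $\Hol(C_4)$, so your argument as written leaves no room for an exception at $e=2$ and you must confront what that clause is actually excluding rather than assert it will be confirmed. Until the four remaining families are explicitly realised (and the $e=2$, $e=3$ degenerations settled), this is a plan rather than a proof; in the paper, all of that work is outsourced to \cite[Theorem 3]{Rump1}.
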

\begin{proof} Since regular subgroups of $\Hol(N)$ correspond to group operations $\circ$ for which $(N,\cdot,\circ)$ is a brace (see \cite[Theorem 4.2]{GV}), where $\cdot$ is the group operation on $N$, this lemma is a restatement of part of \cite[Theorem 3]{Rump1}.
\end{proof}

Let $G$ be a subgroup of $\Hol(N)$ of order $2^{e+s}$. Then
\[\ |G\cap N| = \frac{|G||N|}{|\Hol(N)|}[\Hol(N):GN] = 2^{s+1}[\Hol(N):GN],\]
which in particular implies that
\begin{equation}\label{eqn:sigma G}
|G\cap N| \geq 2^{s+1},\mbox{ or equivalently }\sigma^{2^{e-s-1}} \in G.
\end{equation}
This simple observation will be useful in several arguments.

\begin{proposition}\label{prop:p=2}
    Let $G$ be a transitive subgroup of $\Hol(N)$ of order $2^{e+s}$.
    \begin{enumerate}[$(a)$]
    \item If $s=1$ and $G$ has an element of order $2^e$, then for every subgroup $H$ of $G$ of index $2^e$ with $C =\mathrm{Core}_G(H)$, the quotient group $G/C$ is isomorphic to a transitive subgroup $T$ of $\Hol(N)$ under an isomorphism that sends $H/C$ to the stabiliser $\mathrm{Stab}_T(1_N)$.
    \item If $s=1$ and $G$ has no element of order $2^e$, or if $s\geq 2$, then there exists a normal subgroup $H$ of $G$ of index $2^e$ such that $G/H$ is not even isomorphic to any transitive subgroup of $\Hol(N)$.
    \end{enumerate}
The case $s=0$ is irrelevant because then a subgroup of $G$ of index $2^e$ is trivial.
\end{proposition}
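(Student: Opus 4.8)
The plan is to handle the two parts with complementary tools: for part (a) I would exhibit inside $G$ a normal cyclic subgroup of order $2^e$ that lets me realise $G/C$ in $\Hol(N)$ with the correct stabiliser, while for part (b) I would produce a normal $H$ for which $G/H$ is too small in exponent to be a transitive subgroup at all. The recurring ingredients are the lower bound \eqref{eqn:sigma G} on $|G\cap N|$, Lemma \ref{lem:intersection} controlling the orders of elements of $G/C$, and Lemma \ref{lem:transitive2}, which guarantees that every transitive subgroup of $\Hol(N)$ has an element of order $2^{e-1}$.

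For part (a) we have $|G|=2^{e+1}$ and $|H|=2$. Fix an element $x\in G$ of order $2^e$ and set $M=\langle x\rangle$, which is normal because it has index $2$. A short computation with \eqref{eqn:power} and Lemma \ref{lem:valuation} shows that the unique involution of $M$ is $x^{2^{e-1}}=\sigma^{2^{e-1}}$, the involution of $N$; consequently $H\leq M$ if and only if $H=\langle\sigma^{2^{e-1}}\rangle$. In that case $H$ is characteristic in $N$, so $C=H$, and $M/H$ is a cyclic subgroup of index $2$ in the order-$2^e$ group $G/H$, whence Lemma \ref{lem:Rump} realises $G/H$ as a regular subgroup of $\Hol(N)$, matching the trivial $H/C$. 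In the remaining case $H\cap M=1$; since $M$ is normal and $H\cap M=1$ one checks $M\cap gHg^{-1}=1$ for all $g$, so $M$ acts regularly on $G/H$, and identifying $G/H$ with $M\cong N$ places the image of $G$ inside $N_{\mathrm{Sym}(G/H)}(M)=\Hol(M)\cong\Hol(N)$ with base-point stabiliser $\mathrm{Stab}_G(H)=H$. If $H$ is normal this reads $G/H\cong M\cong N$ regular; if not, $C=1$ and we obtain the required transitive model of $G=G/C$ with $H$ as stabiliser. The cyclic-of-order-$4$ exception in Lemma \ref{lem:Rump} is harmless, since every quotient that turns out cyclic of order $2^e$ is literally $\cong N$.

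For part (b), I would take $H$ to be a subgroup of $N$ itself, which is automatically normal in $G$, and arrange $|H\cap N|$ to be large. If $s\geq 2$, then \eqref{eqn:sigma G} gives $|G\cap N|\geq 2^{s+1}$, so $H=\langle\sigma^{2^{e-s}}\rangle\leq G$ has order $2^s$, index $2^e$, and $|H\cap N|=2^s\geq 4$; Lemma \ref{lem:intersection}(b) then forbids elements of order $2^{e-1}$ in $G/H$. If $s=1$ and $G$ has no element of order $2^e$, I would instead take $H=\langle\sigma^{2^{e-1}}\rangle$ of order $2$ and index $2^e$ with $|H\cap N|=2$, so that Lemma \ref{lem:intersection}(c) produces an element of order $2^{e-1}$ in $G/H$ only if $G$ has one of order $2^e$ --- which it does not. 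Either way $G/H$ has no element of order $2^{e-1}$, so by Lemma \ref{lem:transitive2} it cannot be isomorphic to any transitive subgroup of $\Hol(N)$.

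The main obstacle is the non-normal sub-case of part (a): there $G/C=G$, and I must re-realise $G$ in $\Hol(N)$ so that the given non-normal $H$ --- rather than the stabiliser $\mathrm{Stab}_G(1_N)$ of the original embedding --- becomes a point stabiliser. The normal regular cyclic subgroup $M=\langle x\rangle$ is exactly what supplies this alternative transitive action of $G$, on $G/H$, realised inside $\Hol(M)\cong\Hol(N)$. The one point needing care is that $M$ acts freely, i.e. $M\cap gHg^{-1}=1$ for every $g$, which is where both the normality of $M$ and $H\cap M=1$ enter; this is precisely where the hypothesis that $G$ has an element of order $2^e$ does its work, and its failure is what makes the constructions of part (b) possible.
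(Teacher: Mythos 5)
Your proof is correct, and part (b) together with the overall skeleton of part (a) --- produce the normal cyclic subgroup $M=\langle x\rangle$ of order $2^e$ from the hypothesised element, then split according to whether $H\leq M$ (forcing $H=\langle\sigma^{2^{e-1}}\rangle$, handled by Lemma \ref{lem:Rump}) or $H\cap M=1$ --- matches the paper exactly, including the choice $H=\langle\sigma^{2^{e-s}}\rangle$ and the appeal to Lemmas \ref{lem:intersection} and \ref{lem:transitive2} in part (b). Where you genuinely diverge is the case $H\cap M=1$: the paper stays inside the fixed embedding $G\leq\Hol(N)$ and, using the regularity of $M$ to find $[\sigma^w,\varphi_d]\in M$ with the same translation part as the generator $[\sigma^w,\varphi_c]$ of $H$, builds an explicit automorphism $\Phi$ of $G$ fixing $M$ pointwise and sending $H$ onto $\mathrm{Stab}_G(1_N)$; you instead re-realise $G/C$ inside $\Hol(M)\cong\Hol(N)$ via the left-translation action on $G/H$, using that the normal complement $M$ acts regularly there and that the normaliser of a regular subgroup of $\mathrm{Sym}(G/H)$ is the holomorph. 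Your route is more conceptual, treats the normal and non-normal sub-cases of $H\cap M=1$ uniformly, and avoids the coordinate computation with $\varphi_{d^{-1}c}$, at the cost of importing the (standard, Greither--Pareigis/Byott-style) normaliser fact rather than working only with the explicit formulas the paper has set up; do make sure you identify $G/H$ with $M$ via the base point $H\mapsto 1_M$ so that $\rho(H)$ really lands in $\mathrm{Stab}_T(1_N)$. Your handling of the Rump exception is also fine, though the paper instead observes directly that $G/C$ is non-cyclic in that case (a consequence of Lemma \ref{lem:intersection}(a)), so the exception never arises.
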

\begin{proof}[Proof of $(a)$] Let $[\sigma^u,\varphi_a]\in G$ have order $2^e$, where $u$ is odd and $a\equiv 1\pmod{4}$ by \eqref{eqn:order}. Let $R = \langle[\sigma^u,\varphi_a]\rangle$, which is normal in $G$ because it has index $2$. For any natural number $k$, observe that
\[ [\sigma^u,\varphi_a]^k\in \Aut(N) \iff \sigma^{uS(a,k)}=1 \iff k\equiv 0\hspace{-2mm}\pmod{2^e}\]
by \eqref{eqn:power} and \eqref{eqn:valuation2}, which implies that $\mathrm{Stab}_R(1_N)=1$. Since $R$ has order $2^e$, it follows that $R$ is regular. Letting $G'=\mathrm{Stab}_G(1_N)$, we also see that $G'\cap R=1$ and so $G = R\rtimes G'$ by order consideration. Now, let $H$ be any subgroup of $G$ of
index $2^e$, namely of order $2$, with $C = \mathrm{Core}_G(H)$. 

\medskip 

Suppose first that $H\cap R=1$, in which case $G = R\rtimes H$.
\begin{enumerate}[(1)]
\item If $H$ is normal, then $C = H$ and $G = R\times H$, so projection onto $R$ induces an isomorphism $G/C\simeq R$ that sends $H/C$ to $\mathrm{Stab}_R(1_N)=1$.
\item If $H$ is not normal, then $C=1$ and let $H =\langle [\sigma^w,\varphi_c]\rangle$. Since $R$ is regular, we can find $[\sigma^w,\varphi_d]\in R$, where $c\not\equiv d\pmod{2^e}$ because $H\cap R=1$. Then
\[ [\sigma^w,\varphi_d]^{-1}[\sigma^w,\varphi_{c}] = \varphi_{d^{-1}c} \in G'\]
is non-trivial. The conjugation actions of  $[\sigma^w,\varphi_c]$ and $\varphi_{d^{-1}c}$ have the same effect on the cyclic subgroup $R$ because their quotient lies in $R$. Thus
\[ \Phi: G\rightarrow G;\quad \Phi|_R=\mathrm{id}_R,\,\ \Phi([\sigma^w,\varphi_c]) = \varphi_{d^{-1}c}\]
defines an isomorphism, and it clearly sends $H$ to $G'$.
\end{enumerate}
This concludes the proof of the case $H\cap R =1$.

\medskip

Suppose now that $H\cap R = H$, in which case $H = \langle[\sigma^u,\varphi_a]^{2^{e-1}}\rangle = \langle \sigma^{2^{e-1}}\rangle$. Then $H$ is normal in $G$, that is $C=H$, and we have 
\[ G/C \simeq R/C\rtimes G'.\]
This is a non-cyclic group of order $2^e$ that contains the cyclic subgroup $R/C$ of index $2$. Lemma \ref{lem:Rump} yields that $G/C$ is isomorphic to a regular subgroup $T$ of $\Hol(N)$, and clearly $H/C$ is mapped to $\mathrm{Stab}_T(1_N)=1$ under any isomorphism.

\medskip

This completes the proof of (a).
\end{proof}

\begin{proof}[Proof of $(b)$] Since $\sigma^{2^{e-s-1}}\in G$ by \eqref{eqn:sigma G}, we may take $H = \langle \sigma^{2^{e-s}}\rangle$, which is a normal subgroup of $G$ of index $2^e$. We have $|H\cap N| = |H| = 2^s$, so under the hypothesis of (b), we see from Lemma \ref{lem:intersection} that $G/H$ has no element of order $2^{e-1}$. Thus, it follows from Lemma \ref{lem:transitive2} that $G/H$ is not even isomorphic to any transitive subgroup of $\Hol(N)$.
\end{proof}

Let $G$ be a transitive subgroup of $\Hol(N)$ of order $2^{e+s}$. By Proposition \ref{prop:p=2}, we know that the answer to Question \ref{Q:HSG'} is ``yes" for every $H$ if  $s=1$ and $G$ has an element of order $2^e$, and ``no" for some $H$ otherwise. We shall now give a complete characterisation of such $H$ in the latter case. We have three possible situations, depending on whether
\[ |H\cap N| \geq 4,\quad |H\cap N| =2,\quad |H\cap N|=1,\]
and they require different arguments. The case $|H\cap N| \geq 4$ is easy.
 
\begin{proposition}\label{prop:4}
    Let $G$ be any subgroup of $\Hol(N)$ and let $H$ be any subgroup of $G$ of index $2^e$ with $C=\mathrm{Core}_G(H)$. For $|H\cap N|\geq 4$, the group $G/C$ is not isomorphic to any transitive subgroup of $\Hol(N)$.
\end{proposition}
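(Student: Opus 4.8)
The plan is to prove the statement by an order obstruction, arguing the contrapositive: I will pin down a prime-power order that every transitive subgroup of $\Hol(N)$ is forced to realise, but which $G/C$ cannot realise once $|H\cap N|\geq 4$. Since we are in the case $p=2$ with $|N|=2^e$ and $e\geq 2$, the relevant order is $2^{e-1}$, and the whole argument comes down to comparing this invariant on the two sides.

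First I would invoke Lemma \ref{lem:intersection}(b). Because $p=2$ and $|H\cap N|\geq 4$, that lemma tells us immediately that $G/C$ has \emph{no} element of order $2^{e-1}$. It is worth recalling why $H\cap N$ enters the picture at all: the subgroups of the cyclic group $N$ are characteristic, hence normal in $\Hol(N)$, so $H\cap N$ is a normal subgroup of $G$ contained in $H$ and therefore lies inside $C=\mathrm{Core}_G(H)$. This containment is precisely the feature that Lemma \ref{lem:intersection} exploits when bounding the orders of elements of $G/C$.

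Next I would appeal to Lemma \ref{lem:transitive2}, which guarantees that every transitive subgroup $T$ of $\Hol(N)$ contains an element of order $2^{e-1}$. Combining the two observations finishes the argument: if $G/C$ were isomorphic to some transitive subgroup $T$ of $\Hol(N)$, then $G/C$ would inherit from $T$ an element of order $2^{e-1}$, contradicting the first step. Hence no such isomorphism — indeed no abstract group isomorphism at all, since element orders are an isomorphism invariant — can exist, which is exactly the assertion of the proposition.

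There is essentially no obstacle remaining at this stage: the genuine content has already been absorbed into the earlier $2$-adic valuation computations (Lemmas \ref{lem:valuation} and \ref{lem:intersection}) and into the transitivity argument of Lemma \ref{lem:transitive2}. The proposition is therefore an immediate corollary, and I would expect the written proof to occupy only a couple of lines. The single point I would double-check is that ``transitive subgroup'' in both cited lemmas refers to subgroups of the \emph{same} $\Hol(N)$ with $|N|=2^e$, so that $2^{e-1}$ is the common quantity being compared on the two sides; this is indeed the standing convention throughout this section.
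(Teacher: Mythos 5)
Your proposal is correct and is exactly the paper's argument: the authors prove Proposition \ref{prop:4} by citing Lemma \ref{lem:intersection}(b) (no element of order $2^{e-1}$ in $G/C$ when $|H\cap N|\geq 4$) together with Lemma \ref{lem:transitive2} (every transitive subgroup of $\Hol(N)$ has an element of order $2^{e-1}$). Your additional remarks on why $H\cap N\subseteq C$ and on the invariance of element orders are accurate elaborations of the same two-line proof.
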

\begin{proof}
    This follows immediately from Lemmas \ref{lem:intersection}(b) and \ref{lem:transitive2}.
\end{proof}

Next, we deal with the case $|H\cap N|=2$. Our idea is to consider the centre and the commutator subgroup. For any $[\sigma^u,\varphi_a],[\sigma^v,\varphi_b]\in \Hol(N)$, we have
\begin{equation}\label{eqn:commutator}
[\sigma^v,\varphi_b][\sigma^u,\varphi_a][\sigma^v,\varphi_b]^{-1}[\sigma^u,\varphi_a]^{-1} = \sigma^{u(b-1)-v(a-1)}.
\end{equation}
This implies that $[\sigma^u,\varphi_a]$ and $[\sigma^v,\varphi_b]$ commute if and only if
\begin{equation}\label{eqn:commute}
 u(b-1) \equiv v(a-1)\hspace{-2mm}\pmod{2^e}.
\end{equation}
In particular, we see that
\begin{equation}\label{eqn:centre}
\sigma^{2^{e-1}} \in Z(\Hol(N))\, \mbox{ and }\, \sigma^{2^{e-2}}\in Z(N\rtimes \langle\varphi_5\rangle). 
\end{equation}
Using these observations, we prove two important lemmas.

\begin{lemma}\label{lem:center}
    Let $G$ be a non-regular transitive subgroup of $\Hol(N)$.
    \begin{enumerate}[$(a)$]
    \item $Z(G)$ contains the element $\sigma^{2^{e-1}}$ of order $2$ and is cyclic.
    \item $Z(G)$ contains the element $[\sigma^{2^{e-2}},\varphi_{1+2^{e-1}}]$ of order $4$ when $G$ has no element of order $2^e$.
    \end{enumerate}
\end{lemma}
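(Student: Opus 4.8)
The plan is to treat the two parts in turn, in both cases exploiting the commutation criterion \eqref{eqn:commute} together with the transitivity of $G$ and, crucially, its non-regularity.

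For part $(a)$ I would first locate the distinguished involution. Since $G$ is non-regular and transitive, writing $|G|=2^{e+s}$ forces $s\geq 1$, so \eqref{eqn:sigma G} gives $|G\cap N|\geq 2^{s+1}>1$; as $G\cap N$ is a nontrivial subgroup of the cyclic group $N$, it contains the unique involution $\sigma^{2^{e-1}}$, which lies in $Z(\Hol(N))$ by \eqref{eqn:centre} and hence in $Z(G)$. For cyclicity it then suffices to show that $\sigma^{2^{e-1}}$ is the \emph{only} involution of $Z(G)$, since a finite abelian $2$-group with a unique involution is cyclic. So I would take an arbitrary involution $z=[\sigma^{u_0},\varphi_{a_0}]\in Z(G)$; from \eqref{eqn:power} the relation $z^2=1$ gives $a_0^2\equiv 1$ and $u_0(1+a_0)\equiv 0\pmod{2^e}$. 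If $\varphi_{a_0}=\mathrm{id}$ then $z=\sigma^{u_0}\in N$ and $z^2=1$ forces $z=\sigma^{2^{e-1}}$, as desired. The remaining nontrivial values $a_0\in\{-1,\,2^{e-1}-1,\,2^{e-1}+1\}$ (only $a_0=-1$ when $e=2$) must be excluded.

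To exclude them I would test centrality against a transitive element: transitivity gives $[\sigma,\varphi_{a_1}]\in G$, and \eqref{eqn:commute} yields $u_0(a_1-1)\equiv a_0-1\pmod{2^e}$. For $a_0=2^{e-1}\pm1$ a direct $2$-adic valuation comparison (use $u_0(1+a_0)\equiv 0$ to pin down $v_2(u_0)$, together with the fact that $a_1-1$ is even) makes the two sides incompatible. The case $a_0=-1$ is the main obstacle and is exactly where non-regularity enters: here centrality reads $u_0(b-1)\equiv -2v\pmod{2^e}$ for every $[\sigma^v,\varphi_b]\in G$, and the valuation comparison forces $u_0$ to be odd; but then $b\equiv 1-2u_0^{-1}v\pmod{2^e}$ is \emph{determined} by $v$, so $[\sigma^v,\varphi_b]\mapsto v$ is injective on $G$, whence $|G|=2^e$ and $G$ is regular --- a contradiction. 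This completes $(a)$.

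For part $(b)$ I set $z_0=[\sigma^{2^{e-2}},\varphi_{1+2^{e-1}}]$ (the hypothesis that $G$ has no element of order $2^e$ is vacuous for $e=2$, so effectively $e\geq 3$). Its order is $4$ by \eqref{eqn:order}, and it centralizes $G$: combining \eqref{eqn:commute} with the congruence $b-1\equiv 2v\pmod 4$ from Lemma \ref{lem:transitive2} shows $2^{e-2}(b-1)\equiv 2^{e-1}v\pmod{2^e}$ for every $[\sigma^v,\varphi_b]\in G$, which is precisely the commutation condition for $z_0$. The real content is showing $z_0\in G$. Non-regularity gives $s\geq 1$, so $\sigma^{2^{e-2}}\in G$ by \eqref{eqn:sigma G}, and transitivity produces some $A=[\sigma^{2^{e-2}},\varphi_{a'}]\in G$, where \eqref{eqn:ua cong} forces $a'\equiv 1\pmod 4$. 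Then $A\sigma^{-2^{e-2}}=\varphi_{a'}$ (since $2^{e-2}(1-a')\equiv 0$), so $\varphi_{a'}\in\mathrm{Stab}_G(1_N)$, which by Lemma \ref{lem:transitive2} is a nontrivial subgroup of the cyclic group $\langle\varphi_5\rangle$ and therefore contains its unique involution $\varphi_{1+2^{e-1}}$. Hence $\varphi_{a'}^{-1}\varphi_{1+2^{e-1}}\in\mathrm{Stab}_G(1_N)$ and $z_0=A\cdot\varphi_{a'}^{-1}\varphi_{1+2^{e-1}}\in G$, giving $z_0\in Z(G)$. The subtlety here is exactly distinguishing ``$z_0$ centralizes $G$'' from ``$z_0\in G$'': the former is a routine computation, while the latter is secured only by extracting $\varphi_{a'}$ from $A$ via $\sigma^{2^{e-2}}\in G$ and invoking the cyclicity of the stabiliser.
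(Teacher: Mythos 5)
Your proof is correct and takes essentially the same approach as the paper: both parts reduce to the commutation criterion \eqref{eqn:commute}, with cyclicity of $Z(G)$ obtained by ruling out a second involution $[\sigma^{u_0},\varphi_{a_0}]$ case-by-case on $a_0$ (your injectivity argument for $a_0=-1$ is just the paper's ``stabiliser is forced to be trivial'' contradiction in different clothing), and with $[\sigma^{2^{e-2}},\varphi_{1+2^{e-1}}]\in G$ secured exactly as in the paper from \eqref{eqn:sigma G} together with the fact that the non-trivial stabiliser sits inside the cyclic group $\langle\varphi_5\rangle$ and hence contains $\varphi_{1+2^{e-1}}$.
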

\begin{proof} Note that $\sigma^{2^{e-1}}\in Z(G)$ always holds by \eqref{eqn:sigma G} and \eqref{eqn:centre}. Also 
$\sigma^{2^{e-2}}\in G$ again by \eqref{eqn:sigma G} because $|G| \geq 2^{e+1}$ by non-regularity.

\medskip

To prove (a), it suffices to show that $Z(G)$ has a unique element of order $2$. Suppose that $[\sigma^u,\varphi_a]\in Z(G)$ is an element of order $2$ other than $\sigma^{2^{e-1}}$, which means that we have the congruences
\[
u(1+a)\equiv 0\hspace{-2mm}\pmod{2^e},\quad a^2\equiv1\hspace{-2mm}\pmod{2^e},\quad a\not\equiv 1\hspace{-2mm}\pmod{2^e}.\]
Since $G$ is transitive, we can find $[\sigma,\varphi_b]\in G$, and \eqref{eqn:commute} implies that
\[ u(b-1)\equiv a-1\hspace{-2mm}\pmod{2^e}.\]
If $a\equiv 3\pmod{4}$, then $u$ must be odd.
%, and so $a\equiv -1\pmod{2^e}$ by \eqref{eqn:order2}.
But for any $\varphi_c\in \mathrm{Stab}_G(1_N)$, we again see from \eqref{eqn:commute} that
\[ u(c-1)\equiv 0\hspace{-2mm}\pmod{2^e},\mbox{ that is }c\equiv1\hspace{-2mm}\pmod{2^e},\]
which contradicts that $G$ is non-regular. If $a\equiv 1\pmod{4}$, then $a\equiv 1+2^{e-1}\pmod{2^e}$ with $e\geq 3$ is the only possibility. But then
\[ 2u(1+2^{e-2})\equiv 0\hspace{-2mm}\pmod{2^e}\, \mbox{ and } \, u(b-1)\equiv 2^{e-1}\hspace{-2mm}\pmod{2^e},\]
which cannot simultaneously hold. 

\medskip

To prove (b), suppose that $G$ has no element of order $2^e$. Then $\mathrm{Stab}_G(1_N)$ is contained in $\langle\varphi_5\rangle$ by Lemma \ref{lem:transitive2}. Since $\mathrm{Stab}_G(1_N)\neq 1$ by non-regularity, we see that $e\geq 3$ necessarily and $\varphi_{1+2^{e-1}}\in G$, so in particular
\[[\sigma^{2^{e-2}},\varphi_{1+2^{e-1}}]\in G,\]
which is an element of order $4$ by \eqref{eqn:order}. We have
\[ 2^{e-2}(b-1)\equiv 2^{e-2}(2v)\equiv v((1+2^{e-1})-1)\hspace{-2mm}\pmod{2^e}\]
for all $[\sigma^v,\varphi_b]\in G$ by \eqref{eqn:ua cong}, whence $[\sigma^{2^{e-2}},\varphi_{1+2^{e-1}}]\in Z(G)$ by \eqref{eqn:commute}.
\end{proof}

\begin{lemma}\label{lem:center-commutator}
 Let $G$ be a non-regular transitive subgroup of $\Hol(N)$. Then 
 \[|Z(G)|\cdot |[G,G]| =  2^e.\]
\end{lemma}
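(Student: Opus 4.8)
The plan is to compute $Z(G)$ and $[G,G]$ separately and read off the product. The starting point is the commutator formula \eqref{eqn:commutator}, which shows that every commutator in $G$ is a power of $\sigma$; hence $[G,G]$ is a subgroup of the cyclic group $N$, say $[G,G]=\langle\sigma^{2^\mu}\rangle$ with $|[G,G]|=2^{e-\mu}$, where $2^\mu$ is the greatest common divisor of $2^e$ together with all commutator exponents $u(b-1)-v(a-1)$ arising from $[\sigma^u,\varphi_a],[\sigma^v,\varphi_b]\in G$. Dually, by the commuting criterion \eqref{eqn:commute} the centre is the radical of this pairing, namely $Z(G)=\{[\sigma^u,\varphi_a]\in G : u(b-1)\equiv v(a-1)\pmod{2^e}\text{ for all }[\sigma^v,\varphi_b]\in G\}$, and $Z(G)$ is cyclic by Lemma \ref{lem:center}(a). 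It therefore suffices to prove $|Z(G)|=2^\mu$. Setting $\rho=\min\{v_2(a-1) : [\sigma^v,\varphi_a]\in G,\ a\not\equiv1\pmod{2^e}\}$, which is finite and less than $e$ because $G$ is non-regular and so has nontrivial image in $\Aut(N)$, two facts are immediate: taking $a=1$ in \eqref{eqn:commute} shows $\sigma^u$ is central exactly when $v_2(u)\geq e-\rho$, so $|Z(G)\cap N|=2^\rho$; and since each commutator exponent is a difference of terms of $2$-adic valuation at least $\rho$, we get $\mu\geq\rho$, that is $[G,G]\subseteq\langle\sigma^{2^\rho}\rangle$.

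I would then split according to whether $G$ has an element of order $2^e$. Suppose it does, and fix $[\sigma^{u_R},\varphi_{a_R}]\in G$ of order $2^e$, so that $u_R$ is odd and $a_R\equiv1\pmod4$ by \eqref{eqn:order}. Commuting a central element $[\sigma^u,\varphi_a]$ against this element via \eqref{eqn:commute} forces $a-1\equiv u_R^{-1}u(a_R-1)\pmod{2^e}$, which pins down the automorphism part of $Z(G)$; in particular when $\sigma\in G$ (for instance whenever $|G\cap N|=2^e$, by \eqref{eqn:sigma G}) this gives $a\equiv1\pmod{2^e}$, so $Z(G)\subseteq N$ and $|Z(G)|=2^\rho$. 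On the commutator side, pairing $[\sigma^{u_R},\varphi_{a_R}]$ (whose automorphism part satisfies $v_2(a_R-1)\geq2$) against an element of $G$ realising the minimum $\rho$ produces a commutator of valuation exactly $\rho$, so $\mu=\rho$. Multiplying, $|Z(G)|\cdot|[G,G]|=2^\rho\cdot2^{e-\rho}=2^e$, as desired; the general case within this branch is handled by tracking the same valuation bookkeeping when $\sigma\notin G$.

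If $G$ has no element of order $2^e$, the congruence \eqref{eqn:ua cong} from Lemma \ref{lem:transitive2} becomes the key tool. Writing $a-1=2u+4\gamma$ for each $[\sigma^u,\varphi_a]\in G$, a direct substitution collapses every commutator exponent to $4(u_1\gamma_2-u_2\gamma_1)$, so $[G,G]\subseteq\langle\sigma^4\rangle$ and the commutator subgroup is strictly smaller than in the previous case. Simultaneously, Lemma \ref{lem:center}(b) supplies the central element $[\sigma^{2^{e-2}},\varphi_{1+2^{e-1}}]$ of order $4$ lying outside $N$, so $Z(G)$ now genuinely leaves $N$; one then tracks both the excess of $Z(G)$ over $Z(G)\cap N$ and the deficiency of $[G,G]$ inside $\langle\sigma^{2^\rho}\rangle$ through the finer invariant $\min\{v_2(a-1):a\equiv1\pmod4,\ [\sigma^u,\varphi_a]\in G\}$ and shows they coincide, again giving $|Z(G)|=2^\mu$. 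This last branch is the main obstacle: because the centre no longer sits inside $N$, the argument must show precisely that the extra central elements with nontrivial automorphism part compensate for the extra divisibility forced on the commutators by \eqref{eqn:ua cong}. Throughout, transitivity is used to guarantee an element of $G$ with odd translation part, and \eqref{eqn:sigma G} to supply the explicit powers of $\sigma$ needed as witnesses.
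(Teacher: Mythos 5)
Your strategy of computing $|Z(G)|$ and $|[G,G]|$ separately through the invariant $\rho=\min v_2(a-1)$ contains a claim that is false in general, and it leaves the genuinely hard part of the lemma unproved. In your first branch you establish $|Z(G)|=2^\rho$ and $\mu=\rho$ only under the extra hypothesis $\sigma\in G$, and assert that the case $\sigma\notin G$ follows by ``the same valuation bookkeeping''. It does not: for $e\geq 4$ take $R=\langle[\sigma,\varphi_5]\rangle$, which is a regular cyclic subgroup of $\Hol(N)$ normalised by $\varphi_{1+2^{e-1}}$ (one checks $\varphi_{1+2^{e-1}}[\sigma,\varphi_5]\varphi_{1+2^{e-1}}^{-1}=[\sigma,\varphi_5]^{1+2^{e-1}}$), and set $G=R\rtimes\langle\varphi_{1+2^{e-1}}\rangle$. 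This $G$ is transitive, non-regular, has an element of order $2^e$, and has $\sigma\notin G$; here $\rho=2$, but $Z(G)=\langle[\sigma,\varphi_5]^2\rangle$ has order $2^{e-1}$ and is \emph{not} contained in $N$, while $[G,G]=\langle\sigma^{2^{e-1}}\rangle$ has order $2$. So $|Z(G)|\neq 2^\rho$ and $\mu\neq\rho$; the product is still $2^e$, but not for the reason you give, and your approach of matching both quantities against $\rho$ cannot be repaired by bookkeeping alone --- it only works when $Z(G)\subseteq N$. A second, smaller issue in the same branch: pairing the order-$2^e$ element against an element realising the minimum $\rho$ need not produce a commutator of valuation exactly $\rho$, since the exponent $u(b-1)-v(a-1)$ is a difference of two terms that may both have valuation $\rho$ and cancel.

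In the second branch (no element of order $2^e$) the observation $[G,G]\subseteq\langle\sigma^4\rangle$ via \eqref{eqn:ua cong} is correct and nice, but everything after it --- ``one then tracks \dots and shows they coincide'' --- is a description of what must be proved rather than a proof; producing the central elements that account for the excess of $Z(G)$ over $Z(G)\cap N$ is precisely the content of the lemma and is the delicate step. For comparison, the paper proves two inequalities rather than two equalities: for $|Z(G)|\cdot|[G,G]|\leq 2^e$ it uses the cyclicity of $Z(G)$ (Lemma \ref{lem:center}) and the fact that the $2^{r-1}$-th power of its generator is $\sigma^{2^{e-1}}$ to derive a congruence $b-1\equiv(x^{-1}y)v\pmod{2^r}$ valid for all $[\sigma^v,\varphi_b]\in G$, which forces $[G,G]\subseteq\langle\sigma^{2^r}\rangle$; for the reverse inequality it explicitly exhibits a central element of order $2^{e-t}$, namely a suitable power of an order-$2^e$ element in one case and the carefully corrected element $([\sigma,\varphi_c][\sigma^{-1},\varphi_f])^{2^{t-1}}\varphi_\epsilon$ with $\epsilon\in\{1,1+2^{e-1}\}$ in the other. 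Some construction of this kind is unavoidable, and it is exactly what your proposal omits.
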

%transitivity doesn't seem to be needed but proof is easier if we assume that
\begin{proof} First, we prove the inequality
\[ |Z(G)|\cdot |[G,G]| \leq 2^e.\]
Put $|Z(G)|=2^r$, and note that it suffices to show that $[G,G]$ lies in $\langle \sigma^{2^{r}}\rangle$. By
Lemma \ref{lem:center}, we know that $Z(G)$ is cyclic, so let $[\sigma^u,\varphi_a]$ be its generator. We have $[\sigma^u,\varphi_a]^{2^{r-1}} = \sigma^{2^{e-1}}$ because $\sigma^{2^{e-1}}$ is the only element of order $2$ in $Z(G)$. By \eqref{eqn:power} and \eqref{eqn:valuation2}, this implies that
\[uS(a,2^{r-1}) \equiv 2^{e-1}\hspace{-2mm}\pmod{2^e}\, \mbox{ and }\, a^{2^{r-1}}\equiv 1\hspace{-2mm}\pmod{2^e}.\]
Let us define the integer constants
\[ x = \frac{uS(a,2^{r-1})}{2^{e-1}}\, \mbox{ and }\, y =\frac{a^{2^{r-1}}-1}{2^{e-1}},\]
where $x$ is odd and $y$ is even by the two congruences above. Since $2^{r-1}$ divides $S(a,2^{r-1})$ by \eqref{eqn:valuation2}, for any $[\sigma^v,\varphi_b]\in G$,  multiplying  \eqref{eqn:commute} by $S(a,2^{r-1})$ yields
%it follows from \eqref{eqn:commute} that $u(b-1)\equiv v(a-1)\pmod{2^e}$. Since $2^{r-1}$ divides $S(a,2^{r-1})$ by \eqref{eqn:valuation2}, this yields
\[ uS(a,2^{r-1})(b-1) \equiv v(a^{2^{r-1}}-1)\hspace{-2mm}\pmod{2^{e+r-1}}.\]
Dividing this by $2^{e-1}$ and rearranging, we then obtain
\[ b-1 \equiv (x^{-1}y)v \hspace{-2mm}\pmod{2^{r}}.\]
For any $[\sigma^w,\varphi_d],[\sigma^v,\varphi_b],\in G$, the above congruence implies that
\[ w(b-1) - v(d-1) \equiv w(x^{-1}y)v - v(x^{-1}y)w \equiv 0\hspace{-2mm}\pmod{2^r}.\]
It now follows from \eqref{eqn:commutator} that $[G,G]$ lies inside $\langle \sigma^{2^{r}}\rangle$, as desired. %so in particular its order is at most $2^{e-r}$, as desired.

\medskip

Next, we prove the inequality
\[ |Z(G)|\cdot |[G,G]| \geq 2^e.\]
Put $|[G,G]| = 2^t$, and note that it suffices to show that $Z(G)$ has an element of order $2^{e-t}$. For any $[\sigma^w,\varphi_d],[\sigma^v,\varphi_b]\in G$, we have
\begin{equation}\label{eqn:commutator mod} w(b-1)\equiv v(d-1)\hspace{-2mm}\pmod{2^{e-t}}\end{equation}
by \eqref{eqn:commutator} because $[G,G] =\langle \sigma^{2^{e-t}}\rangle$ here. We consider two cases.
\begin{enumerate}[(1)]
\item Suppose that $G$ has an element $[\sigma^w,\varphi_d]$ of order $2^e$. Then 
\[ [\sigma^w,\varphi_d]^{2^t} = [\sigma^{wS(d,2^t)},\varphi_{d^{2^t}}]\]
has order $2^{e-t}$. Since $2^t$ divides $S(d,2^t)$ by \eqref{eqn:valuation2}, for any $[\sigma^v,\varphi_b]\in G$, by multiplying the congruence \eqref{eqn:commutator mod} by $S(d,2^t)$, we see that
%For any $[\sigma^v,\varphi_b]\in G$, we know from \eqref{eqn:commutator mod} that
%\[ u(b-1)\equiv v(a-1)\hspace{-2mm}\pmod{2^{e-t}}.\]
%Since $2^t$ divides $S(a,2^t)$ by \eqref{eqn:valuation2}, this yields
\[ wS(d,2^t)(b-1) \equiv v (d^{2^t}-1)\hspace{-2mm}\pmod{2^e}.\]
It then follows from \eqref{eqn:commute} that $[\sigma^{wS(d,2^t)},\varphi_{d^{2^t}}]\in Z(G)$. 
\item Suppose that $G$ has no element of order $2^e$. Since $G$ is transitive, we can find $[\sigma,\varphi_c],[\sigma^{-1},\varphi_f]\in G$, and $c,f\equiv 3\pmod{4}$ by \eqref{eqn:ua cong}. Note that
\[ -(c-1) \equiv f-1\hspace{-2mm}\pmod{2^{e-t}}  \]
by \eqref{eqn:commutator mod}, so in particular
\begin{align*} (c-1)(f-1) &\equiv cf - 1\hspace{-2mm}\pmod{2^{e-t}},\\
(c-1)(f-1) & \equiv cf -1 \mbox{ or }cf-1+ 2^{e-t}\hspace{-2mm}\pmod{2^{e-t+1}}.
\end{align*}
Let us choose $\epsilon\in\{1,1+2^{e-1}\}$ to be such that 
\[ (c-1)(f-1)  \equiv (cf-1) + \frac{\epsilon-1}{2^{t-1}}\hspace{-2mm}\pmod{2^{e-t+1}}.\]
As in Lemma \ref{lem:center}(b), we have $\varphi_{1+2^{e-1}}\in G$ with $e\geq 3$ because $\mathrm{Stab}_G(1_N)$ lies in $\langle\varphi_5\rangle$ by Lemma \ref{lem:transitive2} and is non-trivial by non-regularity. Thus
%Recall that $\mathrm{Stab}_G(1_N)$ lies in $\langle\varphi_5\rangle$ by Lemma \ref{lem:transitive2}. Since $\mathrm{Stab}_G(1_N)\neq 1$ by non-regularity, we see that $\varphi_{\epsilon}\in G$, and so
\[ \left([\sigma,\varphi_c][\sigma^{-1},\varphi_f] \right)^{2^{t-1}}\varphi_{\epsilon} = [\sigma^{(1-c)S(cf,2^{t-1})},\varphi_{(cf)^{2^{t-1}}\epsilon}] \in G.\]
Note that $t\leq e-1$, for otherwise $[G,G]=N$ by \eqref{eqn:commutator} and $G$ would have an element of order $2^e$. Since $2^{t-1}$ exactly divides $S(cf,2^{t-1})$ by \eqref{eqn:valuation2} and $\varphi_{\epsilon}^2=1$, it is easy to see from \eqref{eqn:order} that this element has order $2^{e-t}$.

\medskip

Now, for any $[\sigma^v,\varphi_b]\in G$, we know from \eqref{eqn:commutator mod} that
\[ -(b-1) \equiv v(f-1)\hspace{-2mm}\pmod{2^{e-t}}.\]
Multiplying this congruence by $c-1$ then yields
\[ (1-c)(b-1) \equiv v\left( (cf-1) + \frac{\epsilon-1}{2^{t-1}}\right)\pmod{2^{e-t+1}}.\]
Since $2^{t-1}$ exactly divides $S(cf,2^{t-1})$ by \eqref{eqn:valuation2}, we then obtain
\begin{align*}
&\hspace{5mm}(1-c)S(cf,2^{t-1})(b-1)\\ & \equiv v\left( ((cf)^{2^{t-1}}-1)
    + \frac{S(cf,2^{t-1})}{2^{t-1}}(\epsilon-1)\right)&&\hspace{-5mm}\pmod{2^{e}}\\
    &\equiv v\left(  ((cf)^{2^{t-1}}\epsilon -1) + \left( \frac{S(cf,2^{t-1})}{2^{t-1}} - (cf)^{2^{t-1}}\right)(\epsilon-1) \right)&&\hspace{-5mm}\pmod{2^e}\\
    &\equiv v((cf)^{2^{t-1}}\epsilon-1)&&\hspace{-5mm}\pmod{2^e},
\end{align*}
where the last congruence holds because $\epsilon\in \{1,1+2^{e-1}\}$ and
\[ \frac{S(cf,2^{t-1})}{2^{t-1}}\equiv 1 \equiv (cf)^{2^{t-1}} \hspace{-2mm}\pmod{2}.\]
We now deduce from \eqref{eqn:commute} that $[\sigma^{(1-c)S(cf,2^{t-1})},\varphi_{(cf)^{2^{t-1}}\epsilon}]\in Z(G)$.
\end{enumerate}
In both cases, we exhibited an element of order $2^{e-t}$ in $Z(G)$, as desired.

\medskip

We have thus proven the desired equality.
\end{proof}

%We are now ready to give a complete solution for the case $|H\cap N|=2$.
 
\begin{proposition}\label{prop:2} Let $G$ be a transitive subgroup of $\Hol(N)$ and let $H$ be any subgroup of $G$ of index $2^e$ with $C = \mathrm{Core}_G(H)$. For $|H\cap N| = 2$, the following are equivalent:
\begin{enumerate}[$(1)$]
\item $G/C$ is isomorphic to a transitive subgroup $T$ of $\Hol(N)$ under an isomorphism that sends $H/C$ to the stabiliser $\mathrm{Stab}_T(1_N)$.
\item $G$ has an element of order $2^e$ and $H$ is normal in $G$.
\end{enumerate}
\end{proposition}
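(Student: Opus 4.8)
The plan is to treat the two implications separately, the reverse one being routine and the forward one carrying all the difficulty.

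For (2)$\Rightarrow$(1), assume $G$ has an element $g$ of order $2^e$ and $H\trianglelefteq G$, so that $C=H$ and $|G/H|=2^e$. The image $gH$ has order $2^e/|\langle g\rangle\cap H|$, and I would first observe that $\langle g\rangle\cap H$ has order at most $2$: by \eqref{eqn:order} and Lemma \ref{lem:valuation} the element $g^{2^{e-2}}$ lies in $N$ and has order $4$, so if it belonged to $H$ we would get $|H\cap N|\geq 4$, against the hypothesis. Hence $gH$ has order at least $2^{e-1}$, so $G/H$ contains a cyclic subgroup of index at most $2$. By Lemma \ref{lem:Rump} (the cyclic group $C_{2^e}$ being $N$ itself, the $e=2$ exception is harmless here) $G/H$ is isomorphic to a regular subgroup $T$ of $\Hol(N)$, and the trivial subgroup $H/C$ is sent to $\mathrm{Stab}_T(1_N)=1$.

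For (1)$\Rightarrow$(2), I would first establish that $G$ has an element of order $2^e$: under (1) the group $G/C$ is isomorphic to a transitive subgroup of $\Hol(N)$, which by Lemma \ref{lem:transitive2} has an element of order $2^{e-1}$, and since $|H\cap N|=2$, Lemma \ref{lem:intersection}(c) then forces $G$ to have an element of order $2^e$. In particular $|G|\geq 2^{e+1}$, so $G$ is non-regular and Lemmas \ref{lem:center} and \ref{lem:center-commutator} apply to $G$. It remains to prove $H$ normal. Suppose not; then $C\subsetneq H$. If $|H|=2$ then $H=H\cap N=\langle\sigma^{2^{e-1}}\rangle$, which is central by \eqref{eqn:centre} and hence normal, so non-normality forces $|H|\geq 4$. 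Since $H\cap N=\langle\sigma^{2^{e-1}}\rangle\subseteq C$ and this is also $C\cap N$, Lemma \ref{lem:intersection}(a) shows $G/C$ has no element of order $2^e$; combined with (1), we get $G/C\cong T$ for a non-regular transitive $T$, its stabiliser being $H/C\neq 1$, having no element of order $2^e$. By Lemma \ref{lem:center}(a), $Z(T)$, and hence $Z(G/C)$, must be cyclic.

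The heart of the argument, and the step I expect to be the main obstacle, is to contradict this by exhibiting two distinct involutions in $Z(G/C)$, making it non-cyclic. One is always available: since $\sigma^{2^{e-1}}\in C$ while $\sigma^{2^{e-2}}\in G$ by \eqref{eqn:sigma G}, the commutator identity \eqref{eqn:commutator} and \eqref{eqn:centre} show that $\overline{\sigma^{2^{e-2}}}$ is a central element of order $2$ in $G/C$ (here $e\geq 3$; the case $e=2$ is small and can be checked directly since $\Aut(C_4)$ has order $2$). For the second involution I would extract an automorphism from the non-normality of $H$, the natural candidate being $\varphi_{1+2^{e-1}}$, which by \eqref{eqn:commute} commutes with every element of $G$ modulo $\langle\sigma^{2^{e-1}}\rangle\subseteq C$ whenever it lies in $G$, so that its image is central in $G/C$. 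Producing such a second central involution in every configuration is the delicate point: it will require splitting into cases according to the shape of $\mathrm{Stab}_G(1_N)$ (using that it lies in the image of $\langle\varphi_5\rangle$ by Lemma \ref{lem:transitive2}) and of $H$, and verifying in each case that the candidate is genuinely different from $\overline{\sigma^{2^{e-2}}}$, equivalently that $[\sigma^{2^{e-2}},\varphi_{1+2^{e-1}}]\notin C$. Once two distinct central involutions are in hand, $Z(G/C)$ is non-cyclic, contradicting Lemma \ref{lem:center}(a); this contradiction proves $H$ normal and completes (1)$\Rightarrow$(2).
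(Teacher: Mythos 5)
Your reverse implication, and the first half of the forward one (that (1) forces $G$ to have an element of order $2^e$, via Lemmas \ref{lem:transitive2} and \ref{lem:intersection}(c)), are correct and essentially what the paper does. The gap is in the remaining half --- the normality of $H$ --- which is where all the difficulty of the proposition sits and which you explicitly leave as a plan. Your strategy of exhibiting two distinct involutions in $Z(G/C)$ to contradict the cyclicity guaranteed by Lemma \ref{lem:center}(a) is plausible in spirit, and your first involution $\overline{\sigma^{2^{e-2}}}$ is correctly identified. But the second involution is never produced. Your candidate $\varphi_{1+2^{e-1}}$ does always lie in $G$ here (non-normality forces $|H|=|\mathrm{Stab}_G(1_N)|\geq 4$, and every subgroup of $\Aut(N)$ of order at least $4$ contains $\varphi_{1+2^{e-1}}$), but its image in $G/C$ can be trivial: all $G$-conjugates of $\varphi_{1+2^{e-1}}$ differ from it by elements of $\langle\sigma^{2^{e-1}}\rangle\subseteq C$, so $\varphi_{1+2^{e-1}}\in C$ precisely when $\varphi_{1+2^{e-1}}\in H$, a configuration you have not excluded; nor have you excluded $\overline{\varphi_{1+2^{e-1}}}=\overline{\sigma^{2^{e-2}}}$. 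Moreover, the case split you propose ``using that $\mathrm{Stab}_G(1_N)$ lies in $\langle\varphi_5\rangle$ by Lemma \ref{lem:transitive2}'' is not available: that conclusion requires $G$ to have no element of order $2^e$, whereas in this branch you have just shown that $G$ has one.

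The paper reaches the contradiction by a different and complete route. Assuming $H$ non-normal and (1), both $G$ and the transitive group isomorphic to $G/C$ are non-regular, so Lemma \ref{lem:center-commutator} gives $|Z(G)|\cdot|[G,G]|=2^e=|Z(G/C)|\cdot|[G/C,G/C]|$. From this one computes
\[
[Z(G/C):Z(G)C/C]=|[G,G]\cap C|\cdot|Z(G)\cap C|\geq 4,
\]
since both factors contain $\sigma^{2^{e-1}}$; on the other hand, a direct computation with \eqref{eqn:commute} shows that the square of any element of $Z(G/C)$ lifts to an element of $Z(G)$, so the same index is at most $2$ because $Z(G/C)$ is cyclic. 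To salvage your two-involution approach you would have to dispose of the configurations above, which is precisely the work your sketch omits.
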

\begin{proof} Note that $|H\cap N|=2$ means $H\cap N = \langle\sigma^{2^{e-1}}\rangle$.

\medskip 

First, suppose that $H$ is normal in $G$, that is $C = H$. Then $|G/C|=2^e$, so (1) states that $G/C$ is isomorphic to a regular subgroup of $\Hol(N)$. %Note that when $e=2$, since $\Hol(N)\simeq C_4\rtimes C_2$, the only possibility here is
%\[ G = \Hol(N),\quad C = H  = \langle \sigma^2\rangle,\quad G/C = N/\langle\sigma^2\rangle \rtimes \Aut(N)\simeq C_2\times C_2,\]
%the last of which is not cyclic. 
Thus, it follows from Lemma \ref{lem:Rump} that (1) occurs exactly when $G/C$ has an element of order $2^{e-1}$, which in turn is equivalent to $G$ having an element of order $2^e$ by Lemma \ref{lem:intersection}(c).

\medskip

Now, suppose that $H$ is not normal in $G$, that is $C\subsetneq H$. Let us assume for contradiction that $G/C$ is isomorphic to a transitive subgroup, which must be non-regular by order consideration, of $\Hol(N)$. Note that $G/C$ has no element of order $2^e$ by Lemma \ref{lem:intersection}(a), so necessarily $Z(G/C)$ is cyclic of order at least $4$ by Lemma \ref{lem:center}. Below, we shall show that
\begin{enumerate}[(i)]
\item $[Z(G/C) : Z(G)C/C]\leq 2$
\item $[Z(G/C):Z(G)C/C]\geq 4$
\end{enumerate}
simultaneously hold, which would lead to a contradiction.

\medskip

To prove (i), since $Z(G/C)$ is cyclic, it suffices to show that 
\[[\sigma^u,\varphi_a]^2=[\sigma^{u(1+a)},\varphi_{a^2}]\in  Z(G)\]
for all $[\sigma^u,\varphi_a]C\in Z(G/C)$. Indeed, for any $[\sigma^v,\varphi_b]\in G$, by \eqref{eqn:commutator} we have
\[ \sigma^{u(b-1)-v(a-1)}\in C,\, \mbox{ that is } \, u(b-1) \equiv v(a-1)\hspace{-2mm}\pmod{2^{e-1}}\]
because $H\cap N = \langle\sigma^{2^{e-1}}\rangle$. But then
\[ u(1+a)(b-1) \equiv v(a^2-1)\hspace{-2mm}\pmod{2^{e}}\]
and so $[\sigma^{u(1+a)},\varphi_{a^2}]\in Z(G)$
%$[\sigma^u,\varphi_a]^2\in Z(G)$ 
by \eqref{eqn:commute}, as desired. 

\medskip

To prove (ii), %notice that $G$ is clearly non-regular, for otherwise $H=1$. 
recall that the transitive subgroup of $\Hol(N)$ to which $G/C$ is assumed to be isomorphic is non-regular by order consideration, and $G$
%$G/C$ is non-regular by order consideration, so
is also non-regular similarly. We may then apply Lemma \ref{lem:center-commutator} to obtain
\[  |Z(G)|\cdot |[G,G]| = 2^e = |Z(G/C)|\cdot |[G/C,G/C]|.\]
Noting that $[G/C,G/C] = [G,G]C/C$, we can use the above equality to rewrite
\begin{align*} [Z(G/C) : Z(G)C/C] &= \frac{|[G,G]|}{|[G,G]C/C|}\cdot \frac{|Z(G)|}{|Z(G)C/C|}\\
&=|[G,G]\cap C| \cdot |Z(G)\cap C|.\end{align*}
%Using this equality, we can rewrite
%\begin{align*} [Z(G/C):Z(G)C/C] & = \frac{|Z(G)||[G,G]|}{|[G/C,G/C]|}\cdot \frac{|C|}{|Z(G)C|}\\
%& = \frac{|[G,G]|}{| [G,G]C/C|}\cdot \frac{|Z(G)||C|}{|Z(G)C|}\\
%& = |[G,G]\cap C| \cdot |Z(G)\cap C|.
%\end{align*}
Note that $\sigma^{2^{e-1}}\in Z(G)$ by \eqref{eqn:centre}, and $\sigma^{2^{e-1}}\in [G,G]$ because $[G,G]$ is a subgroup of $N$ by \eqref{eqn:commutator} and is non-trivial by the non-normality of $H$. But clearly $H\cap N\subseteq C$ because the subgroups of $N$ are all characteristic. Hence, both of the factors above are at least $2$, and the index in question is at least $4$.
%Hence, it is enough to show that $\sigma^{2^{e-1}}$ lies in both $[G,G]$ and $Z(G)$, for then both of the factors above would be least $2$. Indeed, since $[G,G]$ is a subgroup of $N$ by \eqref{eqn:commutator} and is non-trivial by the non-normality of $H$, we must have $\sigma^{2^{e-1}}\in [G,G]$. It also follows from \eqref{eqn:centre} that $\sigma^{2^{e-1}}\in Z(G)$.

\medskip

We have thus shown both (i) and (ii), which is a contradiction. This means that $G/C$ cannot be isomorphic to any transitive subgroup of $\Hol(N)$.
\end{proof}

Finally, we deal with the case $|H\cap N|=1$.

\begin{lemma}\label{lem:cap1} Let $G$ be a transitive subgroup of $\Hol(N)$ and let $H$ be any subgroup of $G$ of index $2^e$ with $e \geq 3$. For $|H\cap N|=1$, the following hold:
\begin{enumerate}[$(a)$]
\item If $H$ is cyclic and different from the subgroup
%and $H$ is not the subgroup 
$\langle [\sigma^u,\varphi_{-1}]\rangle$ of order $2$ for any odd integer $u$, then $H$ is conjugate to $\mathrm{Stab}_G(1_N)$ in $G$.
\item If $H$ is non-cyclic, then either $H$ is conjugate to $\mathrm{Stab}_G(1_N)$ in $G$, or $H$ can be mapped to $\mathrm{Stab}_G(1_N)$ under an outer automorphism of $G$.
\end{enumerate}
In particular, the core of $H$ in $G$ is trivial under the above hypotheses.
\end{lemma}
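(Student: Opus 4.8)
The plan is to show, in every case covered by the hypotheses, that $H$ lies in the same $\Aut(G)$-orbit as $G' = \mathrm{Stab}_G(1_N) = G\cap\Aut(N)$. This suffices for everything: $G'$ is the stabiliser of a point in the faithful transitive action of $G$ on $N$, so $\mathrm{Core}_G(G') = 1$, and if $\Phi(H) = G'$ for some $\Phi\in\Aut(G)$ then $\mathrm{Core}_G(H) = \Phi^{-1}(\mathrm{Core}_G(G')) = 1$. Since $|H\cap N| = 1$, the projection of $H$ to $\Aut(N) = \langle\varphi_{-1}\rangle\times\langle\varphi_5\rangle$ is injective, so $H$ is abelian, and $|H| = |G|/2^e = |G'|$. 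The only mechanism needed is the conjugation identity $[\sigma^v,\varphi_b][\sigma^u,\varphi_a][\sigma^v,\varphi_b]^{-1} = [\sigma^{v(1-a)+ub},\varphi_a]$ together with transitivity (every residue occurs as a $\sigma$-exponent of some element of $G$). For a generator with $a\equiv1\pmod4$ this mirrors Proposition \ref{prop:odd}: as $|H\cap N|=1$ forces $v_2(u)\ge v_2(a-1) = v_2(1-a)$ via \eqref{eqn:order}, choosing $v$ with $v(1-a)\equiv -u$ strictly raises the $2$-adic valuation of the $\sigma$-exponent, and iterating conjugates the generator into $\langle\varphi_5\rangle$.

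For part (a), write $H = \langle[\sigma^u,\varphi_a]\rangle$. When $a\equiv1\pmod4$ the reduction above conjugates $H$ into $\Aut(N)$, hence onto $G'$ by $|H| = |G'|$. When $a\equiv3\pmod4$, the decisive point is that \eqref{eqn:order} forces $u$ to be even whenever $|H\cap N| = 1$, the sole exception being $a\equiv-1\pmod{2^e}$ with $|H| = 2$: for $a\not\equiv-1$ and $u$ odd one computes $|[\sigma^u,\varphi_a]| = 2|\varphi_a|$, so the cyclic group it generates meets $N$ nontrivially. For $u$ even one has $v_2(1-a) = 1\le v_2(u)$ and the same reduction runs, again landing $H$ onto $G'$. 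The unique leftover is $H = \langle[\sigma^u,\varphi_{-1}]\rangle$ with $u$ odd, whose $\sigma$-exponent $2v + ub$ stays odd under every conjugation and which is therefore excluded.

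Part (b) is the substance. Here $H\cong C_2\times C_{2^k}$ with $k\ge1$, generated by commuting elements $h_0 = [\sigma^{u_0},\varphi_{-1}]$ (an involution) and $h_1 = [\sigma^{u_1},\varphi_c]$ of order $2^k$ with $c\equiv1\pmod4$. First I would inner-conjugate $H$ so that $h_1 = \varphi_c$; commutativity of $H$ then forces $2^k$ to divide the $\sigma$-exponent $u_0'$ of the conjugated $h_0$, by \eqref{eqn:commute}. If $u_0'$ is divisible enough that a conjugation fixing $\varphi_c$ (by some $[\sigma^v,\varphi_b]$ with $2^{e-k}\mid v$) can annihilate it, then $H$ is $G$-conjugate to $G'$. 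Otherwise I would straighten $h_0$ by an automorphism built from the central involution $\sigma^{2^{e-1}}\in G\cap N$: the map $[\sigma^u,\varphi_a]\mapsto[\sigma^{u+f(a)},\varphi_a]$ is an automorphism of $G$ exactly when $f$ is a $1$-cocycle from the image of $G$ in $\Aut(N)$ into $G\cap N$, and is outer exactly when $f$ is not the coboundary of some $\sigma^w\in G$; choosing $f$ to cancel the residual $\sigma$-exponents of the generators of $H$ sends $H$ onto $G'$ by the order equality.

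The hard part will be this construction in (b). One must exhibit a cocycle $f$ with values in $G\cap N$ (so that $\Phi$ preserves $G$, not merely $\Hol(N)$) extending the values prescribed on $\langle\varphi_{-1},\varphi_c\rangle$ to the whole image of $G$, with the target involution (either $\varphi_{-1}$ or $\varphi_{2^{e-1}-1}$) being the one that actually lies in $G'$, and then certify that $f$ is not a coboundary in exactly those cases where $H$ is not $G$-conjugate to $G'$. Implicit in all of this is that a non-cyclic $H$ with $|H\cap N| = 1$ forces $G'$ itself to be non-cyclic and isomorphic to $H$, so that $G'$ is an attainable target at all. This cohomological bookkeeping, together with the tension between fixing $\varphi_c$ (which constrains $v$ to $2^{e-k}\mid v$) and straightening $h_0$, is where the real work lies.
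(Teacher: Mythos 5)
Your part (a) is essentially the paper's argument and is correct: the valuation-raising conjugation from Proposition \ref{prop:odd}, combined with the observation via \eqref{eqn:order} that a generator $[\sigma^u,\varphi_a]$ with $a\equiv 3\pmod 4$, $a\not\equiv -1\pmod{2^e}$ and $u$ odd has order $2|\varphi_a|$ and hence forces $H\cap N\neq 1$. (One small slip: the condition for conjugation by $[\sigma^v,\varphi_b]$ to fix $\varphi_c$, where $\varphi_c$ has order $2^k$ and $c\equiv 1\pmod 4$, is $2^k\mid v$, not $2^{e-k}\mid v$.) The first half of part (b) also tracks the paper: conjugate the $\varphi_c$-generator into $\Aut(N)$, then push $[\sigma^{u_0},\varphi_{-1}]$ towards $\varphi_{-1}$, the residual obstruction being that the first conjugation step may multiply $\varphi_c$ by $\sigma^{2^{e-1}}$, leaving $H=\langle\varphi_{-1}\rangle\times\langle[\sigma^{2^{e-1}},\varphi_a]\rangle$ against $G'=\langle\varphi_{-1}\rangle\times\langle\varphi_a\rangle$.

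The genuine gap is in the branch you explicitly defer as ``where the real work lies'', and the framework you propose for it cannot be completed. Your automorphisms $\Phi_f\colon[\sigma^u,\varphi_a]\mapsto[\sigma^{u+f(a)},\varphi_a]$, with $f$ a cocycle from the image of $G$ in $\Aut(N)$ into $G\cap N$, provably fail when $[N:G\cap N]=2$. Indeed, $\Phi_f(\varphi_{-1})=[\sigma^{f(-1)},\varphi_{-1}]$ must land in $G'\leq\Aut(N)$, forcing $f(-1)=0$; the cocycle identity applied to the commuting pair $\varphi_{-1},\varphi_{\widetilde{a}}$ (the image of $G$ in $\Aut(N)$ being $\langle\varphi_{-1}\rangle\times\langle\varphi_{\widetilde{a}}\rangle$ with $a\equiv\widetilde{a}^{2x}$ for some odd $x$) gives $2f(\varphi_{\widetilde{a}})\equiv(1-\widetilde{a})f(\varphi_{-1})\equiv 0\pmod{2^e}$, so $f(\varphi_{\widetilde{a}})\in\{0,2^{e-1}\}$, and then $f(\varphi_a)=S(\widetilde{a},2x)f(\varphi_{\widetilde{a}})\equiv 0\pmod{2^e}$ because $S(\widetilde{a},2x)$ is even by \eqref{eqn:valuation2}. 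Hence every such $\Phi_f$ fixes $[\sigma^{2^{e-1}},\varphi_a]$ and cannot carry $H$ to $G'$. Your construction does work when $N\subseteq G$ (there the image of $G$ in $\Aut(N)$ equals that of $H$ and $f(\varphi_a)=2^{e-1}$ is an admissible cocycle value, recovering the paper's automorphism that is the identity on $N\rtimes\langle\varphi_{-1}\rangle$), but the case $[N:G\cap N]=2$ is exactly where the ``outer automorphism'' of the statement is needed, and there the paper's map is of a genuinely different shape: it sends $\sigma^2\mapsto\sigma^{2(1+2^{e-3})}$, so it is not the identity on $G\cap N$, and $\varphi_{-1}\mapsto\varphi_{-1}\varphi_{1+2^{e-1}}$, so it does not even preserve the $\Aut(N)$-component. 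Exhibiting that map and verifying it is a well-defined automorphism of $G$ is the bulk of the paper's proof of (b), and nothing in your proposal produces it.
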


In what follows, let $|H| =2^s$, and we can assume that $s\geq 1$. Note that the projection of $H$ onto $\Aut(N)$ is isomorphic to $H$ because $H\cap N=1$. Hence, if $H$ is cyclic, then the projection is equal to
\[ \langle\varphi_a\rangle,\mbox{ where }\begin{cases}
   a\equiv 1+2^{e-s},\, -1+2^{e-s}\hspace{-2mm}\pmod{2^e} &\mbox{when }s\geq 2,\\
   a\equiv 1+2^{e-1},\, -1+2^{e-1},\, -1\hspace{-2mm}\pmod{2^e} &\mbox{when }s=1.
\end{cases}
\]
Note that $s\leq e-2$, namely $2^{e-s}\equiv 0\pmod{4}$, has to hold here, for otherwise the projection would be $\Aut(N)$, which is non-cyclic since $e\geq 3$. On the other hand, if $H$ is non-cyclic, then the projection is equal to
\[\langle \varphi_{-1}\rangle\times \langle \varphi_{a}\rangle,\mbox{ where }a\equiv 1+2^{e-s+1}\hspace{-2mm}\pmod{2^e},\]
because a non-cyclic subgroup of $\Aut(N)$ must contain $\langle\varphi_{-1}\rangle$.
%Here, a non-cyclic subgroup $W$ of $\Aut(N)$ must contain $\langle\varphi_{-1}\rangle$, for otherwise $W\cap \langle\varphi_{-1}\rangle =1$ and $W$ would embed into the cyclic group $\langle\varphi_5\rangle$. 
Here $s\leq e-1$, namely $2^{e-s+1}\equiv 0\pmod{4}$, has to hold because $\Aut(N)$ has order $2^{e-1}$. 

\medskip 

Therefore, by lifting the generators to $H$, we can write
\[ H = \begin{cases}\langle[\sigma^u,\varphi_a]\rangle &\mbox{in (a)},\\
%when $H$ is cyclic, and
\langle [\sigma^w,\varphi_{-1}]\rangle\times \langle [\sigma^u,\varphi_{a}]\rangle&\mbox{in (b)}.\end{cases}\]
Also put $G' = \mathrm{Stab}_G(1_N)$ for brevity. We now proceed to the proof.

\begin{proof}[Proof of $(a)$] We use the same idea as in  the proof of Proposition \ref{prop:odd}. Since $G$ is transitive, for any $v$ we can find $[\sigma^v,\varphi_b]\in G$, and observe that
\[ [\sigma^v,\varphi_b][\sigma^u,\varphi_a][\sigma^{v},\varphi_b]^{-1} 
= [\sigma^{v(1-a) +ub},\varphi_a].
\]
Below, we show that $v$ may be taken to be such that
\[v_2(v(1-a)+ub) > v_2(u),\]
in which case we can repeat this process to deduce that $[\sigma^u,\varphi_a]$ is conjugate to $\varphi_a$. Since $|H| = |G'|$, we must then have $G'= \langle\varphi_a\rangle$.
\begin{enumerate}[(1)]
\item 
If $a\equiv 1+2^{e-s}\pmod{2^e}$, then we see from \eqref{eqn:power} that  
\[ [\sigma^u,\varphi_a]^{2^s} = 1\, \mbox{ implies }\, uS(a,2^s) \equiv 0\hspace{-2mm}\pmod{2^e}.\]
Since $2^{e-s}\equiv0\pmod{4}$ here, we deduce from \eqref{eqn:valuation2} that $u\equiv0\pmod{2^{e-s}}$. Thus, we can pick $v$ to satisfy $v(1-a)\equiv -u\pmod{2^e}$, and we have
\[ v(1-a)+ub \equiv u(b-1)\equiv 0\hspace{-2mm}\pmod{2^{v_2(u)+1}}.\]
\item If $a\equiv -1+2^{e-s}\pmod{2^e}$ , then we again see from \eqref{eqn:power} that
\[ ([\sigma^u,\varphi_a]^2)^{2^{s-1}} =1\, \mbox{ implies }\, u(1+a)S(a^2,2^{s-1})\equiv 0\hspace{-2mm}\pmod{2^e}.\]
Since $2^{e-s}\equiv0\pmod{4}$ here, we deduce from \eqref{eqn:valuation2} that $u$ must be even. Thus, we can pick $v=2^{v_2(u)-1}$, and we have
\[ v(1-a)+ub\equiv 2^{v_2(u)} \left(1 - 2^{e-s-1} + \frac{ub}{2^{v_2(u)}} \right) \equiv 0\hspace{-2mm}\pmod{2^{v_2(u)+1}},\]
where $2^{e-s-1}$ is even because $s\leq e-2$.
%Since $[\sigma^u,\varphi_a]$ has order $2^s$ and $[\sigma^u,\varphi_a]^2 = [\sigma^{u(1+a)},\varphi_{a^2}]$ has order $2^{s-1}$, it follows from \eqref{eqn:power} that we have
%\begin{align*} uS(a,2^s) &\equiv 0\hspace{-2mm}\pmod{2^e},\\
% u(1+a)S(a^2,2^{s-1})&\equiv 0\hspace{-2mm}\pmod{2^e}.
% \end{align*}
%If $a\equiv 1+2^{e-s}\pmod{2^e}$, then $u\equiv 0\pmod{2^{e-s}}$ by the first congruence and \eqref{eqn:valuation2}, so we can pick $v$ to satisfy $v(1-a)\equiv -u\pmod{2^e}$, and we have
%\[ v(1-a)+ub \equiv u(b-1)\equiv 0\hspace{-2mm}\pmod{2^{v_2(u)+1}}.\]
%If $a\equiv -1+2^{e-s}\pmod{2^e}$, then $u$ is even by the second congruence, so we can pick $v=2^{v_2(u)-1}$, and we have
%\[ v(1-a)+ub\equiv 2^{v_2(u)} \left(1 - 2^{e-s-1} + b\cdot \frac{u}{2^{v_2(u)}} \right) \equiv 0\hspace{-2mm}\pmod{2^{v_2(u)+1}},\]
%where $2^{e-s-1}$ is even because $s\leq e-2$.
%
\item If $s=1$ and $a\equiv -1\pmod{2^{e}}$, then $u$ is even by hypothesis. Thus, we can similarly pick $v=2^{v_2(u)-1}$, and we have
\[ v(1-a)+ub\equiv 2^{v_2(u)} \left(1+\frac{ub}{2^{v_2(u)}} \right) \equiv 0\hspace{-2mm}\pmod{2^{v_2(u)+1}}.\]
\end{enumerate}
In all cases, we have exhibited a suitable choice of $v$ that satisfies the desired inequality, and this completes the proof.
\end{proof}

\begin{proof}[Proof of $(b)$] 

Since $a\equiv 1\pmod{4}$, the same argument  as in (a) shows that we can conjugate $[\sigma^u,\varphi_a]$ to $\varphi_a$ in $G$. Thus, we may assume that 
\[H = \langle[\sigma^w,\varphi_{-1}]\rangle\times\langle\varphi_a\rangle\]
up to conjugation in $G$. Since $[\sigma^w,\varphi_{-1}]$ and $\varphi_a$ commute, we must have
\[ w(a-1)\equiv0 \hspace{-2mm}\pmod{2^e},\mbox{ that is } v_2(w) \geq s-1.\]
%w\equiv 0\hspace{-2mm}\pmod{2^{s-1}}.\]
Note that $s\geq 2$ here because $H$ is non-cyclic.
%by hypothesis. 
In particular, $w$ is even, so as in (a), we can find $[\sigma^{2^{v_2(w)-1}},\varphi_b]\in G$ by transitivity, and 
%so the same argument as in (a) shows that we can conjugate $[\sigma^w,\varphi_{-1}]$ to $\varphi_{-1}$ in $G$. More specifically, we can find an element of the form $[\sigma^{2^{v_2(w)-1}},\varphi_b]\in G$ by transitivity, and 
%\[ [\sigma^{2^{v_2(w)-1}},\varphi_b] [\sigma^w,\varphi_{-1}][\sigma^{2^{v_2(w)-1}},\varphi_b]^{-1} = [\sigma^{*},\varphi_{-1}]\mbox{ with }v_2(*) > v_2(w).\]
\[ [\sigma^{2^{v_2(w)-1}},\varphi_b] [\sigma^w,\varphi_{-1}][\sigma^{2^{v_2(w)-1}},\varphi_b]^{-1} = [\sigma^{2^{v_2(w)} +wb},\varphi_{-1}],\]
where we have 
\[v_2(2^{v_2(w)} +wb) = v_2(w) + v_2\left(1+\frac{wb}{2^{v_2(w)}} \right) > v_2(w).\]
By repeating this process, we can then conjugate $[\sigma^w,\varphi_{-1}]$ to $\varphi_{-1}$ in $G$. However, we must also track how the element $\varphi_a$ gets affected in the process. Since $a\equiv 1+2^{e-s+1}\pmod{2^e}$, for any $f\geq s-1$ we see that
\begin{align*} [\sigma^{2^{f-1}},\varphi_b]\varphi_a[\sigma^{2^{f-1}},\varphi_b]^{-1} &= [\sigma^{2^{f-1}(1-a)},\varphi_{a}] \\&= \begin{cases}
    \varphi_a & \mbox{for }f\geq s,\\
    [\sigma^{2^{e-1}},\varphi_a]& \mbox{for }f=s-1.
\end{cases} \end{align*}
Therefore, we deduce that:
\begin{enumerate}[(1)]
\item If $v_2(w) \geq s$, then $\varphi_a$ is not affected in the process, and so $H$ is conjugate to $\langle\varphi_{-1}\rangle\times \langle\varphi_a\rangle$ in $G$. Since $|H| = |G'|$, we must have $G' = \langle\varphi_{-1}\rangle\times \langle\varphi_a\rangle$. 
\item If $v_2(w)=s-1$, then $\varphi_a$ is conjugated to $[\sigma^{2^{e-1}},\varphi_a]$ at the first step, but $[\sigma^{2^{e-1}},\varphi_a]$ remains unchanged afterwards by \eqref{eqn:centre} and the $f\geq s$ case.
\end{enumerate}
%Observe that in the $f= s-1$ case, as we repeat the conjugation process, the element $\varphi_a$ remains unchanged by the $f\geq s$ case, and $\sigma^{2^{e-1}}$ is also unaffected by \eqref{eqn:centre}.
%Therefore, recalling that $v_2(w)\geq s-1$ by the above, we deduce that:
%\begin{enumerate}[(1)]
%\item If $v_2(w) \geq s$, then  $H$ is conjugate to $\langle\varphi_{-1}\rangle\times \langle\varphi_a\rangle$ in $G$. Since $|H| = |G'|$, we must then have $G' = \langle\varphi_{-1}\rangle\times \langle\varphi_a\rangle$. 
%\item If $v_2(w) = s-1$, then $H$ is conjugate to $\langle\varphi_{-1}\rangle\times \langle [\sigma^{2^{e-1}},\varphi_a]\rangle$ in $G$. 
%\end{enumerate}
In case (1), we are done. In case (2), we may assume that
\[ H = \langle\varphi_{-1}\rangle\times \langle [\sigma^{2^{e-1}},\varphi_a]\rangle\]
up to conjugation in $G$. Since $\sigma^{2^{e-1}}\in G$ by \eqref{eqn:sigma G}, we deduce that $\varphi_a\in G$, and
%In fact, we have $\sigma^{2^{e-1}}\in Z(G)$ by \eqref{eqn:centre}, and this observation will be useful later. 
%But $|H| = |G'|$, so it follows that  
\[G'=\langle\varphi_{-1}\rangle\times\langle\varphi_a\rangle\]
because $|H| = |G'|$. Below, we construct an automorphism of $G$ that sends $H$ to $G'$. Note that we can find $[\sigma,\varphi_c]\in G$ by transitivity, and we have
\[ [\sigma,\varphi_c]\varphi_{-1}[\sigma,\varphi_c]^{-1}\varphi_{-1}^{-1} = \sigma^2 \in G.\]
This implies that $[N:G\cap N]=1,2$. We consider these two cases separately.

\medskip

If $[N:G\cap N]=1$, then $N$ lies in $G$ and by order consideration, we obtain
\[ G = I\rtimes \langle [\sigma^{2^{e-1}},\varphi_{a}]\rangle = I\rtimes \langle\varphi_{a}\rangle,\mbox{ where } I= N\rtimes \langle\varphi_{-1}\rangle. \]
The conjugation actions of $[\sigma^{2^{e-1}},\varphi_a]$ and $\varphi_{a}$ plainly have the same effect on $I$ because $\sigma^{2^{e-1}}\in Z(G)$ by \eqref{eqn:centre}. It follows that
\[ \Phi: G\rightarrow G;\quad \Phi|_I = \mathrm{id}_I,\,\  \Phi([\sigma^{2^{e-1}},\varphi_{a}])=\varphi_a\]
defines an automorphism on $G$, and it clearly sends $H$ to $G'$.

\medskip

If $[N:G\cap N]=2$, then the projection of $G$ onto $\Aut(N)$ has order
\[ [G:G\cap N] = \frac{2^{e+s}}{2^{e-1}} = 2^{s+1} = 2|H| = 2[H:H\cap N],\]
so it contains the projection of $H$ onto $\Aut(N)$ as a subgroup of index $2$. The projection of $G$ onto $\Aut(N)$ must then be equal to
\[\langle \varphi_{-1}\rangle \times \langle \varphi_{\widetilde{a}}\rangle,\mbox{ where }\widetilde{a}\equiv 1+2^{e-s}\hspace{-2mm}\pmod{2^e}.\]
Let $z$ be such that $[\sigma^z,\varphi_{\widetilde{a}}]\in G$. Note that $z$ is odd, for otherwise $\sigma^{z}\in G$ and $\varphi_{\widetilde{a}}=\sigma^{-z}[\sigma^z,\varphi_{\widetilde{a}}]\in G'$, which is not the case. 
We then deduce that $[\sigma^z,\varphi_{\widetilde{a}}]$ has order $2^{e}$ by \eqref{eqn:order}, and that $N\cap \langle[\sigma^z,\varphi_{\widetilde{a}}]\rangle  =\langle\sigma^{2^{s}}\rangle$ by \eqref{eqn:power} and \eqref{eqn:valuation2}. We shall also choose $z$ to be such that $z\equiv 3\pmod{4}$, which is possible because $\sigma^2\in G$. This condition will be important for the later calculations.

\medskip

Now, let us consider the product
\[ J = (G\cap N)\langle [\sigma^z,\varphi_{\widetilde{a}}]\rangle = \langle\sigma^2,[\sigma^z,\varphi_{\widetilde{a}}]\rangle,\]
which is a subgroup of $G$ because $G\cap N$ is normal in $G$. We have
\[ |J| = \frac{|G\cap N||\langle[\sigma^z,\varphi_{\widetilde{a}}]\rangle|}{|N\cap \langle [\sigma^z,\varphi_{\widetilde{a}}]\rangle|} = \frac{2^{e-1}\cdot 2^e}{2^{e-s}} = 2^{e+s-1},\]
and so $G = J\rtimes \langle\varphi_{-1}\rangle$ has to hold. Consider 
\begin{align*} \Phi : G\rightarrow G;\quad &\Phi(\sigma^2)=\sigma^{2(1+2^{e-3})},\,\ \Phi(\varphi_{-1}) = \varphi_{-1}\varphi_{a}^{2^{s-2}} = \varphi_{-1}\varphi_{1+2^{e-1}},\\
&\Phi([\sigma^{z},\varphi_{\widetilde{a}}])=\sigma^{2^{e-3}}[\sigma^{z},\varphi_{\widetilde{a}}] = [\sigma^{2^{e-3}+z},\varphi_{\widetilde{a}}].\end{align*}
Note that $e\geq 4$ here, for otherwise $G$ would contain $N$ because $s\geq 2$. Hence, we have $\sigma^{2^{e-3}} =(\sigma^2)^{2^{e-4}}\in G$, and $\sigma^{2^{e-3}}[\sigma^{z},\varphi_{\widetilde{a}}]$ has order $2^e$ by \eqref{eqn:order}. %Since
%\[ z 2^{e-3}S(\widetilde{a},2^{s}) \equiv 2^{e-3}S(\widetilde{a},2^{s}) \hspace{-2mm}\pmod{2^e}\]
%holds by \eqref{eqn:valuation2} and the fact that $s\geq 2$, we have
%\[ \Phi([\sigma^z,\varphi_{\widetilde{a}}]^{2^s})  = \sigma^{zS(\widetilde{a},2^s)(1+2^{e-3})} =\sigma^{(2^{e-3}+z)S(\widetilde{a},2^s)}=
%\Phi([\sigma^z,\varphi_{\widetilde{a}}])^{2^s},\]
%where the middle equality holds by \eqref{eqn:valuation2} and the fact that $s\geq 2$, the definition of $\Phi$ is consistent on 
%\[ (G\cap N) \cap \langle [\sigma^z,\varphi_{\widetilde{a}}]\rangle = N \cap \langle [\sigma^z,\varphi_{\widetilde{a}}]\rangle = \langle \sigma^{2^s}\rangle.\]
Since $G\cap N$ is normal in $J$ and is centralised by $\sigma^{2^{e-3}}$, we easily check that $\Phi$ defines a homomorphism on $J$. Moreover, we have
\begin{align*}
\Phi(\varphi_{-1})\Phi(\sigma^2)\Phi(\varphi_{-1})^{-1} 
%& = \varphi_{-1}\varphi_{1+2^{e-1}}\cdot \sigma^{2(1+2^{e-3})}\cdot \varphi_{1+2^{e-1}}^{-1}\varphi_{-1}^{-1}\\
&=\sigma^{-2(1+2^{e-1})(1+2^{e-3})}\\
   &= \sigma^{-2(1+2^{e-3})}\\
   &= \Phi(\varphi_{-1}\sigma^2\varphi_{-1}^{-1}),
   \end{align*}
and observe that
\begin{align*}
\Phi(\varphi_{-1})\Phi([\sigma^z,\varphi_{\widetilde{a}}])\Phi(\varphi_{-1})^{-1} %& = \varphi_{-1}\varphi_{1+2^{e-1}}\cdot \sigma^{2^{e-3}}[\sigma^{z},\varphi_{\widetilde{a}}]\cdot \varphi_{1+2^{e-1}}^{-1}\varphi_{-1}^{-1}\\
    &= \sigma^{-(1+2^{e-1})(2^{e-3}+z)}\cdot \varphi_{\widetilde{a}}\\
    &= \sigma^{-(2+2^{e-1})(2^{e-3}+z)}\cdot[\sigma^{2^{e-3}+z},\varphi_{\widetilde{a}}]\\
    & = \sigma^{-2z(1+2^{e-3})}\cdot\Phi([\sigma^z,\varphi_{\widetilde{a}}])\\
    & = \Phi(\sigma^{2})^{-z}\cdot \Phi([\sigma^z,\varphi_{\widetilde{a}}])\\
    & = \Phi(\varphi_{-1}[\sigma^z,\varphi_{\widetilde{a}}]\varphi_{-1}^{-1}),
\end{align*} 
where the third last equality holds since
%$e\geq 4$ and
$z\equiv 3\pmod{4}$. Hence, we have shown that $\Phi$ defines a homomorphism on $G$. It is not hard to see that $\mathrm{Im}(\Phi)$ contains all three of the generators $\sigma^2, [\sigma^z,\varphi_a],  \varphi_{-1}$ of $G$, so in fact $\Phi$ is an automorphism on $G$. Finally, note that $a\equiv \widetilde{a}^{2x}\pmod{2^{e}}$ for some odd $x$, so we see that
\begin{align*}
\Phi([\sigma^{2^{e-1}},\varphi_a]) & = \Phi(\sigma^{2^{e-1}-zS(\widetilde{a},2x)}\cdot [\sigma^z,\varphi_{\widetilde{a}}]^{2x})\\
& = \sigma^{(2^{e-1}-zS(\widetilde{a},2x))(1+2^{e-3})}\cdot (\sigma^{2^{e-3}}[\sigma^z,\varphi_{\widetilde{a}}])^{2x}\\
& = [\sigma^{(2^{e-1} - zS(\widetilde{a},2x))(1+2^{e-3})+(2^{e-3}+z)S(\widetilde{a},2x)},\varphi_{a}]\\
& = [\sigma^{2^{e-1} - 2^{e-3}S(\widetilde{a},2x)(z-1)},\varphi_a]\\
& = \varphi_a,
\end{align*}
where in the last equality, we used the choice that $z\equiv 3\pmod{4}$ and the fact that $v_2(S(\widetilde{a},2x))=1$ by \eqref{eqn:valuation2}. It follows that $\Phi$ takes $H$ to $G'$, as desired.

\medskip

This concludes the proof.
\end{proof}

To deal with the remaining case when $H = \langle[\sigma^u,\varphi_{-1}]\rangle$ with $u$ odd, we shall compare the centraliser of $H$ with that of the stabilisers.

\begin{lemma}\label{lem:centraliser} Let $G$ be a transitive subgroup of $\Hol(N)$ of order $2^{e+1}$ that contains $\varphi_{1+2^{e-1}}$. Then we have
\[|C_G(\varphi_{1+2^{e-1}})| = 2^e.\]
Moreover, for any $[\sigma^u,\varphi_{-1}]\in G$ with $u$ odd (if it exists), we have
\[|C_G([\sigma^u,\varphi_{-1}])| \leq 2^e,\]
which is a strict inequality if and only if 
\begin{equation}\label{eqn:non-cong} u(b-1) \not\equiv -2v \hspace{-2mm}\pmod{2^{e-1}}\mbox{ for some }[\sigma^v,\varphi_b]\in G.\end{equation}
%one of the following holds: 
%\begin{enumerate}[$(1)$]
%\item $|G\cap N|\geq 8$;
%\item there exists $[\sigma^z,\varphi_5]\in G$ such that $4u+2z\not\equiv 0\pmod{2^{e-1}}$.
%\end{enumerate}
%\[|C_G([\sigma^u,\varphi_{-1}])| <2^e.\]
%Let $[\sigma^u,\varphi_{-1}]\in G$ be such that $u$ is odd (if it exists).
%\begin{enumerate}[$(a)$]
%\item We have $|C_G(\varphi_{1+2^{e-1}})| = 2^e$.
%\item We have $|C_G([\sigma^u,\varphi_{-1}])| \leq 2^e$, and $|C_G([\sigma^u,\varphi_{-1}])| <2^e$ if $|G\cap N|\geq 8$ or if there exists $[\sigma^z,\varphi_5]\in G$ such that $4u+2z\not\equiv 0\pmod{2^{e-1}}$.
%\end{enumerate}
%Moreover, we always have $\varphi_{1+2^{e-1}}$ when $G$ has no element of order $2^{e}$.
\end{lemma}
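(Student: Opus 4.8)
The plan is to translate both centraliser computations into the commuting criterion \eqref{eqn:commute}, which states that $[\sigma^u,\varphi_a]$ and $[\sigma^v,\varphi_b]$ commute precisely when $u(b-1)\equiv v(a-1)\pmod{2^e}$, and then to count. For the first assertion I would take the fixed element to be $\varphi_{1+2^{e-1}}$, i.e. the case $a=1+2^{e-1}$ and $u=0$: an element $[\sigma^v,\varphi_b]\in G$ centralises it if and only if $v\cdot 2^{e-1}\equiv 0\pmod{2^e}$, that is, $v$ is even. Since every $b$ is odd, the formula for multiplication in $\Hol(N)$ shows that $[\sigma^v,\varphi_b]\mapsto v \bmod 2$ is a homomorphism $G\to\mathbb{Z}/2$, and it is surjective because transitivity supplies an element with $v$ odd. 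Hence $C_G(\varphi_{1+2^{e-1}})$ is its kernel, of index $2$ and order $2^e$.

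For the second assertion, I would apply \eqref{eqn:commute} to the fixed element $[\sigma^u,\varphi_{-1}]$: an element $[\sigma^v,\varphi_b]\in G$ centralises it if and only if $u(b-1)\equiv -2v\pmod{2^e}$. Writing $G\cap N=\langle\sigma^{2^{e-j}}\rangle$, we have $j\geq 2$ by \eqref{eqn:sigma G}, so in particular $\sigma^{2^{e-1}}\in G$. Taking $b=1$ in the centralising congruence shows that $C_G([\sigma^u,\varphi_{-1}])\cap N=\langle\sigma^{2^{e-1}}\rangle$ has order $2$. I would then organise the count by the cosets of $G\cap N$: for a coset with representative $[\sigma^{v_0},\varphi_b]$ the other lifts are $[\sigma^{v_0+m2^{e-j}},\varphi_b]$ with $0\le m<2^j$, and substituting into the centralising congruence shows that such a coset contributes either $0$ or exactly $2$ centralising elements, the latter occurring exactly when $u(b-1)+2v_0\equiv 0\pmod{2^{e-j+1}}$. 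As there are $[G:G\cap N]=2^{e+1-j}$ cosets, this yields $|C_G([\sigma^u,\varphi_{-1}])|\leq 2\cdot 2^{e+1-j}=2^{e+2-j}\leq 2^e$, the final step using $j\geq 2$.

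For the equality clause, the bound forces $|C_G([\sigma^u,\varphi_{-1}])|=2^e$ to require $j=2$ together with every coset being compatible. When $j=2$ the compatibility condition $2^{e-j+1}\mid u(b-1)+2v_0$ is exactly the congruence $u(b-1)\equiv -2v\pmod{2^{e-1}}$, which is coset-invariant since within a coset $2v$ varies by multiples of $2^{e-1}$; thus equality is equivalent to the negation of \eqref{eqn:non-cong}. It then remains to rule out equality when $j\geq 3$, which I would do by noting that there $\sigma^{2^{e-3}}\in G\cap N$, and this element violates $u(b-1)\equiv -2v\pmod{2^{e-1}}$ (the left side is $0$, the right side is $-2^{e-2}$); hence \eqref{eqn:non-cong} holds and the inequality is strict, consistently with the claim.

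The main obstacle is the $2$-adic bookkeeping. Unlike the first part, the natural ``valuation'' map $[\sigma^v,\varphi_b]\mapsto \tfrac{u(b-1)}{2}+v$ into $\mathbb{Z}/2^{e-1}$ fails to be a homomorphism, so one cannot simply identify the centraliser as a kernel and must instead count coset by coset. The delicate point is to keep straight the distinction between the centralising congruence modulo $2^e$ and the condition \eqref{eqn:non-cong} phrased modulo $2^{e-1}$, and to verify cleanly that each compatible coset contributes exactly $2$ elements rather than more.
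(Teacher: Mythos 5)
Your proof is correct, but the counting in the second half is organised differently from the paper's. For the upper bound on $|C_G([\sigma^u,\varphi_{-1}])|$, the paper decomposes $G$ into the $2^e$ fibres over $N$ (each fibre being a coset of $\mathrm{Stab}_G(1_N)=\langle\varphi_{1+2^{e-1}}\rangle$ of size $2$) and observes that, since $u$ is odd, $\varphi_{1+2^{e-1}}$ does not centralise $[\sigma^u,\varphi_{-1}]$, so each fibre contributes at most one centralising element; strictness then falls out immediately as ``some fibre contributes none'', and combining the two congruences for the pair $[\sigma^v,\varphi_b]$, $[\sigma^v,\varphi_{b(1+2^{e-1})}]$ gives exactly the condition \eqref{eqn:non-cong} modulo $2^{e-1}$. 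You instead decompose $G$ into the $2^{e+1-j}$ cosets of $G\cap N=\langle\sigma^{2^{e-j}}\rangle$, show each contributes $0$ or exactly $2$ centralising elements, and invoke \eqref{eqn:sigma G} to get $j\geq 2$ and hence the bound $2^{e+2-j}\leq 2^e$; the biconditional then needs the extra case split $j=2$ versus $j\geq 3$, the latter disposed of by exhibiting $\sigma^{2^{e-3}}$ as a witness to \eqref{eqn:non-cong}. Both arguments are sound (I checked that each compatible $(G\cap N)$-coset contributes exactly two solutions $m$ modulo $2^j$, and that your compatibility condition at $j=2$ is precisely the negation of \eqref{eqn:non-cong} and is representative-independent). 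The paper's pairing is more uniform --- it needs neither \eqref{eqn:sigma G} nor the $j\geq 3$ case --- while yours makes the dependence on $|G\cap N|$ explicit, which is in the spirit of how \eqref{eqn:non-cong} is later exploited in Proposition \ref{prop:1}. Your homomorphism-to-$\mathbb{Z}/2$ argument for $|C_G(\varphi_{1+2^{e-1}})|=2^e$ is a slightly slicker packaging of the same count as the paper's.
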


\begin{proof}
The hypothesis implies that $\mathrm{Stab}_G(1_N) = \langle \varphi_{1+2^{e-1}}\rangle$, and for each $v$, we have exactly one $\varphi_b$ modulo $\langle\varphi_{1+2^{e-1}}\rangle$ such that $[\sigma^v,\varphi_b]\in G$. %When $G$ has no element of order $2^e$, we must have $\varphi_{1+2^{e-1}}\in G$ because $\mathrm{Stab}_G(1_N)$ lies in $\langle\varphi_5\rangle$ by Lemma \ref{lem:transitive2}.

\medskip

For any $[\sigma^v,\varphi_b]\in G$, it follows from \eqref{eqn:commute} that 
\begin{align}\notag 
[\sigma^v,\varphi_b]\in C_G(\varphi_{1+2^{e-1}}) &\iff 0\equiv 2^{e-1}v\hspace{-2mm}\pmod{2^e},\\\label{eqn:centralizer}
[\sigma^v,\varphi_b]\in C_G([\sigma^u,\varphi_{-1}])& \iff u(b-1)\equiv -2v\hspace{-2mm}\pmod{2^e}.
\end{align}
For $\varphi_{1+2^{e-1}}$, the condition simply says that $v$ is even, so there are $2^{e-1}$ choices for $v$ and the equality follows. For $[\sigma^u,\varphi_{-1}]$, note that $\varphi_{1+2^{e-1}}$ does not satisfy the congruence \eqref{eqn:centralizer} because $u$ is odd, which implies that
\[ [\sigma^v,\varphi_{b}]\in C_G([\sigma^u,\varphi_{-1}])\, \mbox{ and }\, [\sigma^v,\varphi_{b(1+2^{e-1})}] \in C_G([\sigma^u,\varphi_{-1}])\]
cannot hold simultaneously. This observation yields the desired inequality, and it also implies that the inequality is strict if and only if there exists $[\sigma^v,\varphi_b]\in G$ such that both of the containments fail, namely
\[ [\sigma^v,\varphi_{b}],[\sigma^v,\varphi_{b(1+2^{e-1})}] \not\in C_G([\sigma^u,\varphi_{-1}]).\]
Since $u$ is odd, this is equivalent to $u(b-1)\not\equiv -2v\pmod{2^{e-1}}$ by \eqref{eqn:centralizer}.
%In case (1), we have $\sigma^{2^{e-3}}\in G$ by \eqref{eqn:sigma G}, and $\sigma^{2^{e-3}}$ is such an element by \eqref{eqn:centralizer}. In case (2), we easily see from \eqref{eqn:centralizer} that $[\sigma^z,\varphi_5]$ is such an element.%\medskip
%In case (1), we have $\sigma^{2^{e-3}}\in G$ by \eqref{eqn:sigma G}, and 
%\[ \sigma^{2^{e-3}},[\sigma^{2^{e-3}},\varphi_{1+2^{e-1}}] \not\in C_G([\sigma^u,\varphi_{-1}]).\]
%If there exists $[\sigma^z,\varphi_5]\in G$ such that $4u+2z\not\equiv 0\pmod{2^{e-1}}$, then
%\[ [\sigma^z,\varphi_{5}],[\sigma^{z},\varphi_{5(1+2^{e-1})}] \not\in C_G([\sigma^u,\varphi_{-1}]).\]
%Both of the above follow from \eqref{eqn:centralizer} and the hypothesis that $u$ is odd. We then deduce that the inequality must be strict.
\end{proof}

\begin{proposition}\label{prop:1}
    Let $G$ be a transitive subgroup of $\Hol(N)$ and let $H$ be any subgroup of $G$ of index $2^e$ with $C =\mathrm{Core}_G(H)$. For $|H\cap N|=1$, the following are equivalent:
    \begin{enumerate}[$(1)$]
    \item $G/C$ is not isomorphic to any transitive subgroup $T$ of $\Hol(N)$ under an isomorphism that sends $H/C$ to the stabiliser $\mathrm{Stab}_T(1_N)$.
    \item $|G| = 2^{e+1}$, $|G\cap N|\geq 8$ or $|G\cap N| = |[G,G]| =4$, $\mathrm{Stab}_G(1_N)=\langle\varphi_{1+2^{e-1}}\rangle$, and $H = \langle[\sigma^u,\varphi_{-1}]\rangle$ for an odd integer $u$.
    \end{enumerate}
%Moreover, in the case that (2) holds, there is no element of order $2^e$ in $G$.
\end{proposition}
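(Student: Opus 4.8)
The plan is to handle the case $|H\cap N|=1$ by distinguishing the structure of $H$ according to Lemma \ref{lem:cap1}. First I would dispose of the ``easy'' subcases: if $H$ is cyclic but not of the form $\langle[\sigma^u,\varphi_{-1}]\rangle$ with $u$ odd, or if $H$ is non-cyclic, then Lemma \ref{lem:cap1} tells us that $H$ is either conjugate to $G'=\mathrm{Stab}_G(1_N)$ in $G$ or mapped to $G'$ by an automorphism of $G$. In either situation $C=\mathrm{Core}_G(H)=1$, and the (inner or outer) automorphism carrying $H$ to $G'$ exhibits $G/C=G$ as a transitive subgroup of $\Hol(N)$ with $H/C$ sent to $\mathrm{Stab}_G(1_N)$. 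Hence statement $(1)$ fails for all such $H$. This means the only $H$ for which $(1)$ can possibly hold are those with $H = \langle[\sigma^u,\varphi_{-1}]\rangle$ for an odd $u$, which is exactly the form appearing in $(2)$, so the task reduces to analysing precisely this family.

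For $H = \langle[\sigma^u,\varphi_{-1}]\rangle$ with $u$ odd, the subgroup has order $2$ and $C=1$ (since $H\cap N=1$ forces the core into $N$). So $(1)$ is the assertion that $G$ itself is \emph{not} isomorphic to a transitive subgroup of $\Hol(N)$ via an isomorphism taking $H$ to the stabiliser of $1_N$. The natural invariant to exploit is the isomorphism type of $G$ together with the conjugacy/automorphism class of the distinguished order-$2$ subgroup $H$. My approach is to observe that if such an isomorphism $\theta:G\to T\leq\Hol(N)$ existed, then $\theta(H)=\mathrm{Stab}_T(1_N)$ is a stabiliser in a transitive subgroup of order $|G|$. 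When $|G|=2^{e+1}$ and $G$ has no element of order $2^e$, Lemma \ref{lem:transitive2} forces every stabiliser to lie in $\langle\varphi_5\rangle$, so $\mathrm{Stab}_T(1_N)$ would be generated by an element of the form $\varphi_{1+2^{e-1}}$ (the unique candidate of order $2$ in $\langle\varphi_5\rangle$ inside a transitive subgroup of this order). The point of Lemma \ref{lem:centraliser} is then to compare centralisers: $|C_T(\mathrm{Stab}_T(1_N))|=|C_T(\varphi_{1+2^{e-1}})|=2^e$ is forced, whereas $|C_G(H)|=|C_G([\sigma^u,\varphi_{-1}])|\le 2^e$, with strict inequality detected by the non-congruence \eqref{eqn:non-cong}. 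Since centraliser order is an isomorphism invariant, a genuine isomorphism $\theta$ would require equality $|C_G(H)|=2^e$; thus $(1)$ holds precisely when the centraliser is strictly smaller, i.e. when \eqref{eqn:non-cong} holds.

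The remaining work is to translate the centraliser condition \eqref{eqn:non-cong} into the explicit group-theoretic criterion in $(2)$, namely $|G|=2^{e+1}$, $\mathrm{Stab}_G(1_N)=\langle\varphi_{1+2^{e-1}}\rangle$, and either $|G\cap N|\ge 8$ or $|G\cap N|=|[G,G]|=4$. I would first argue that $H$ of the given form together with transitivity forces $|G|=2^{e+1}$ and $G$ to have no element of order $2^e$ (if $G$ had such an element, Proposition \ref{prop:p=2}(a) or a direct argument would place us in a situation where $(1)$ fails), which via Lemma \ref{lem:transitive2} and non-regularity pins down $\mathrm{Stab}_G(1_N)=\langle\varphi_{1+2^{e-1}}\rangle$. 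Then the congruence \eqref{eqn:non-cong} should be reinterpreted using \eqref{eqn:ua cong}: since $b-1\equiv 2v\pmod 4$ holds for all $[\sigma^v,\varphi_b]\in G$, the quantity $u(b-1)+2v$ is controlled modulo higher powers of $2$ by how large $G\cap N$ and $[G,G]$ are. Concretely, I expect the failure of \eqref{eqn:non-cong} (equality of centralisers, so $(1)$ false) to coincide exactly with $G\cap N$ being small, forcing the complementary condition $|G\cap N|\ge 8$ or $|G\cap N|=|[G,G]|=4$ when $(1)$ holds; Lemma \ref{lem:center-commutator} relating $|Z(G)|$ and $|[G,G]|$ should be the tool that converts between the centraliser computation and the $[G,G]$ condition. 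The main obstacle will be this last bookkeeping step: carefully establishing, for each configuration of $|G\cap N|$ and $|[G,G]|$, whether some $[\sigma^v,\varphi_b]\in G$ witnesses the non-congruence \eqref{eqn:non-cong}, and verifying that the borderline case $|G\cap N|=4$ genuinely splits according to whether $|[G,G]|=4$ or $|[G,G]|=2$. This requires a delicate $2$-adic valuation analysis of $u(b-1)+2v$ across all group elements, and ruling out spurious isomorphisms $\theta$ that might take $H$ to a stabiliser in a transitive subgroup not of the expected form.
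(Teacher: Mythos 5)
Your overall architecture matches the paper's: reduce to $H=\langle[\sigma^u,\varphi_{-1}]\rangle$ with $u$ odd via Lemma \ref{lem:cap1}, pin down $|G|=2^{e+1}$ and $\mathrm{Stab}_G(1_N)=\langle\varphi_{1+2^{e-1}}\rangle$ via Proposition \ref{prop:p=2} and Lemma \ref{lem:transitive2}, and then use the centraliser comparison of Lemma \ref{lem:centraliser}. The direction ``\eqref{eqn:non-cong} holds $\Rightarrow$ (1) holds'' is sound as you describe it. But there is a genuine gap in the other direction: you assert that (1) holds \emph{precisely} when \eqref{eqn:non-cong} holds, i.e.\ that equality of centraliser orders suffices for the existence of the required isomorphism. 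Lemma \ref{lem:centraliser} only makes $|C_G(H)|=2^e$ a \emph{necessary} condition; when \eqref{eqn:non-cong} fails you still have to exhibit an isomorphism of $G$ onto a transitive subgroup of $\Hol(N)$ carrying $H$ to the stabiliser, and nothing in your proposal produces one. This is exactly the remaining case $|G\cap N|=4$, $|[G,G]|=2$, where the paper writes $G=J\rtimes H$ with $J=(G\cap N)\langle[\sigma^z,\varphi_5]\rangle$, adjusts $z$ so that $[\sigma^z,\varphi_5]$ commutes with $[\sigma^u,\varphi_{-1}]$, and constructs an explicit automorphism $\Phi$ of $G$ fixing $[\sigma^z,\varphi_5]$ and sending $[\sigma^u,\varphi_{-1}]\mapsto\varphi_{1+2^{e-1}}$, $\sigma^{2^{e-2}}\mapsto[\sigma^u,\varphi_{-1}]\varphi_{1+2^{e-1}}$. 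Without such a construction the equivalence is not proved.

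Two secondary points. First, the ``bookkeeping'' translating \eqref{eqn:non-cong} into the conditions on $|G\cap N|$ and $|[G,G]|$ is not where Lemma \ref{lem:center-commutator} enters (it is not used in this proposition at all); the paper instead computes $[G,G]=\langle\sigma^{2^{e-1}},\sigma^{4u+2z}\rangle$ from the decomposition $G=J\rtimes H$, and the witnesses to \eqref{eqn:non-cong} are $\sigma^{2^{e-3}}$ when $|G\cap N|\geq 8$ and $[\sigma^z,\varphi_5]$ when $|[G,G]|=4$. Second, your claim that $H$ of the given form together with transitivity forces $G$ to have no element of order $2^e$ is false as stated (take $G=N\rtimes\langle\varphi_{-1}\rangle$); in the forward direction you can rescue it via Proposition \ref{prop:p=2}(a) as you suggest, but in the converse direction that appeal is circular, and one needs the paper's direct argument deducing it from $\mathrm{Stab}_G(1_N)=\langle\varphi_{1+2^{e-1}}\rangle$ and $[\sigma^u,\varphi_{-1}]\in G$. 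The $e=2$ base case, which the paper disposes of separately, is also unaddressed.
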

\begin{proof}
%By Lemma \ref{lem:cap1}, we may assume that $|G|=2^{e+1}$ and 
%\[ H= \langle [\sigma^u,\varphi_{-1}]\rangle,\mbox{ where $u$ is odd}.\]
%We may further assume that $e\geq 3$ and 
%\[ G'=\mathrm{Stab}_G(1_N)=  \langle\varphi_{1+2^{e-1}}\rangle,\]
%for otherwise $G$ has an element of order $2^{e}$ by Lemma \ref{lem:transitive2}, and so condition (1) does not hold by Proposition \ref{prop:p=2}. 
We may assume that $e\geq 3$, because otherwise $G=\mathrm{Hol}(N)$ is the only non-regular transitive subgroup, in which case (1) fails  by Proposition \ref{prop:p=2}, and (2) also fails because $|G\cap N|=4,\, |[G,G]|=2$. Put $G'=\mathrm{Stab}_G(1_N)$.

\medskip

First, suppose that (1) holds. Then $|G| = 2^{e+1}$ and $H = \langle[\sigma^u,\varphi_{-1}]\rangle$ with $u$ odd by Lemma \ref{lem:cap1}. We also know from Proposition \ref{prop:p=2} that $G$ has no element of order $2^e$, and so $G' = \langle\varphi_{1+2^{e-1}}\rangle$ by Lemma \ref{lem:transitive2}. %The conditions on $G\cap N$ and $[G,G]$ will be dealt with later.

\medskip

Conversely, suppose that
\[ |G| = 2^{e+1},\quad G' = \langle\varphi_{1+2^{e-1}}\rangle,\quad H = \langle[\sigma^u,\varphi_{-1}]\rangle\mbox{ with $u$ odd}.\]
Note that $\varphi_{1+2^{e-1}}$ and $[\sigma^u,\varphi_{-1}]$ do not commute by \eqref{eqn:centralizer}, so $H$ is not normal in $G$, that is $C = 1$. We may then state the negation of (1) as follows:
\begin{enumerate}[($*$)]
\item $G$ is isomorphic to a transitive subgroup $T$ of $\Hol(N)$ under an isomorphism that sends $H$ to the stabiliser $\mathrm{Stab}_T(1_N)$.
%\item $|G\cap N|=4$ and  $|[G,G]|=2$.
\end{enumerate}
%Here $|G\cap N|\leq 2$ does not occur by \eqref{eqn:sigma G}, and $|[G,G]|=1$ is also impossible by the non-normality of $H$.
Note that $|G\cap N| \leq 2$ does not occur by \eqref{eqn:sigma G} and $|[G,G]|=1$ is also impossible by the non-normality of $H$. Since $[G,G]$ is contained in $G\cap N$ by \eqref{eqn:commutator}, there are only three cases:
\[ |G\cap N|\geq 8,\quad |G\cap N| = |[G,G]|=4,\quad |G\cap N| = 4\mbox{ with }|[G,G]|=2.\]
The claim of the proposition is that ($*$) fails in the first two cases, and holds in the last case. 

\medskip

%Here is how we shall show that ($*$) fails. 
Before considering each of the above cases, let us give a sufficient condition for ($*$) to fail. Note that $G$ has no element of order $2^e$. Indeed, if $[\sigma^v,\varphi_b]\in G$ is of order $2^e$, then $v$ is odd and $b\equiv 1\pmod{4}$ by \eqref{eqn:order}. But this yields $\sigma^2\in G$, because $u(\frac{b-1}{2})+v$ is odd and
\[ [\sigma^v,\varphi_b][\sigma^u,\varphi_{-1}][\sigma^v,\varphi_b]^{-1}[\sigma^u,\varphi_{-1}]^{-1} = \sigma^{u(b-1) + 2v} \in G\]
by \eqref{eqn:commutator}. Since $u$ and $v$ are both odd, this in turn implies that
\[ \varphi_{-b}=[\sigma^u,\varphi_{-1}]\cdot \sigma^{2(\frac{u-v}{2})} \cdot [\sigma^v,\varphi_{b}]\in G,\mbox{ where }-b\equiv 3\hspace{-2mm}\pmod{4},\]
and this contradicts the hypothesis on $G'$. Hence, if ($*$) holds, then by Lemma \ref{lem:transitive2} we must have $\mathrm{Stab}_T(1_N) =\langle\varphi_{1+2^{e-1}}\rangle$, and this implies that
%$\mathrm{Stab}_T(1_N) =\langle\varphi_{1+2^{e-1}}\rangle$ because $T\simeq G$ has no element of order $2^e$, and it follows that
\[|C_G([\sigma^u,\varphi_{-1}])| = |C_G(H)| = |C_T(\mathrm{Stab}_T(1_N))| = |C_T(\varphi_{1+2^{e-1}})|\]
must hold. By its contrapositive, we see that if the non-congruence \eqref{eqn:non-cong} holds, then the above equalities fail by Lemma \ref{lem:centraliser}, and so ($*$) also fails.
%we can deduce from Lemma \ref{lem:centraliser} that ($*$) fails.

%Hence, in the case that  ($*$) holds,  we have  $\mathrm{Stab}_T(1_N) =\langle\varphi_{1+2^{e-1}}\rangle$ by Lemma \ref{lem:transitive2} because $T\simeq G$ has no element of order $2^e$, and it follows that
%\[|C_G([\sigma^u,\varphi_{-1}])| = |C_G(H)| = |C_T(\mathrm{Stab}_T(1_N))| = |C_T(\varphi_{1+2^{e-1}})|\]
%must hold. From Lemma \ref{lem:centraliser}, we then deduce that the non-congruence \eqref{eqn:non-cong} is a sufficient condition to conclude that ($*$) does not hold.

\medskip

For the case $|G\cap N|\geq 8$, that is $\sigma^{2^{e-3}}\in G$, the element $\sigma^{2^{e-3}}$ satisfies \eqref{eqn:non-cong} and so ($*$) does not hold.

\medskip

For the case $|G\cap N|=4$, that is $G \cap N = \langle\sigma^{2^{e-2}}\rangle$, observe that $G$ projects surjectively onto $\Aut(N)$, so we can find $[\sigma^z,\varphi_5]\in G$, and $z$ is even by \eqref{eqn:ua cong}.
\begin{enumerate}[$\bullet$]
\item For $e\geq 4$, we must have $v_2(z)=1$, for otherwise $\sigma^{zS(5,2^{e-4})} \in \langle\sigma^{2^{e-2}}\rangle \subseteq G$ by \eqref{eqn:valuation2}, which would imply that
\[\varphi_{5^{2^{e-4}}}= \sigma^{-zS(5,2^{e-4})}\cdot [\sigma^z,\varphi_{5}]^{2^{e-4}}  \in G.\]
This contradicts that $G'$ has order $2$.
\item For $e=3$, we have $\sigma^2 = \sigma^{2^{e-2}} \in G$, and so
\[ [\sigma^{z+2},\varphi_5] = \sigma^2[\sigma^z,\varphi_5]\in G. \]
Thus, replacing $z$ by $z+2$ if necessary, we may assume that $v_2(z)=1$.
\end{enumerate}
Since $v_2(z)=1$, we see from \eqref{eqn:order} that $[\sigma^z,\varphi_5]$ has order $2^{e-1}$, and from \eqref{eqn:power} and \eqref{eqn:valuation2} that $N\cap\langle[\sigma^z,\varphi_5]\rangle = \langle \sigma^{2^{e-1}}\rangle $.

\medskip

Now, similar to the proof of Lemma \ref{lem:cap1}(b), let us consider the product
\[ J = (G\cap N)\langle[\sigma^z,\varphi_5]\rangle = \langle \sigma^{2^{e-2}},[\sigma^z,\varphi_5]\rangle,\]
which is a subgroup of $G$ because $G\cap N$ is normal in $G$. We have
\[ |J| = \frac{|G\cap N||\langle[\sigma^z,\varphi_{5}]\rangle|}{|N\cap \langle [\sigma^z,\varphi_{5}]\rangle|} = \frac{2^2\cdot 2^{e-1}}{2} = 2^{e},\]
and so $G = J\rtimes H$ has to hold. Since  $J$ is abelian by \eqref{eqn:centre}, we see that
\begin{align*}
[G,G] = [J,J][J,H]\rtimes [H,H] = [J,H]. 
\end{align*}
Moreover, using \eqref{eqn:commutator}, we compute that
\begin{align*}
\sigma^{2^{e-2}}[\sigma^u,\varphi_{-1}]\sigma^{-2^{e-2}}[\sigma^u,\varphi_{-1}]^{-1} & =\sigma^{2^{e-1}} ,\\
[\sigma^z,\varphi_5][\sigma^u,\varphi_{-1}][\sigma^z,\varphi_{5}]^{-1}[\sigma^u,\varphi_{-1}]^{-1} &= \sigma^{4u+2z}.
\end{align*}
We then deduce (see \cite[Chapter 4, Exercise 2(a) in \S1]{Suzuki} for example) that
\[ [J,H] = \langle j\sigma^{2^{e-1}}j^{-1} , j\sigma^{4u+2z}j^{-1} : j \in J\rangle.\]
The conjugation action of $j\in J$ on $N$ clearly does not affect the subgroup that is being generated, so we see that
\[ |[G,G]| = |\langle \sigma^{2^{e-1}},\sigma^{4u+2z}\rangle| = \begin{cases}
4 & \mbox{when }4u+2z\not\equiv0\hspace{-2mm}\pmod{2^{e-1}},\\
2 &  \mbox{when }4u+2z\equiv0\hspace{-2mm}\pmod{2^{e-1}}.
\end{cases}\]
We consider these two cases separately.
\begin{enumerate}[(i)]
\item If $4u+2z\not\equiv 0 \pmod{2^{e-1}}$, then $[\sigma^z,\varphi_5]$ satisfies \eqref{eqn:non-cong} and so ($*$) does not hold, as we have already explained.
\item If $4u + 2z\equiv 0\pmod{2^{e-1}}$, then 
\[ 4u + 2z \equiv 0\hspace{-2mm}\pmod{2^e}\, \mbox{ or } \, 4u+2(z +2^{e-2}) \equiv 0\hspace{-2mm}\pmod{2^e},\]
so it follows from \eqref{eqn:centralizer} that 
\[ [\sigma^z,\varphi_{5}]\in C_G([\sigma^u,\varphi_{-1}])\, \mbox{ or }\, \sigma^{2^{e-2}}[\sigma^{z},\varphi_{5}] \in C_G([\sigma^u,\varphi_{-1}]).\]
For $e=3$, note that we have $4u+2z\equiv 0\pmod{8}$ since $u$ and $\frac{z}{2}$ are odd, so $[\sigma^z,\varphi_{5}]$ commutes with $[\sigma^u,\varphi_{-1}]$ by \eqref{eqn:centralizer}. For $e\geq 4$, since $2^{e-2}+ z$ is still exactly divisible by $2$, replacing $z$ by $2^{e-2}+z$ if needed, we may also assume that $[\sigma^z,\varphi_5]$ commutes with $[\sigma^u,\varphi_{-1}]$. Consider
\begin{align*} \Phi:G\rightarrow G;\quad &\Phi(\sigma^{2^{e-2}}) = [\sigma^u,\varphi_{-1}]\varphi_{1+2^{e-1}},\,\ \Phi([\sigma^u,\varphi_{-1}]) = \varphi_{1+2^{e-1}},\\&\Phi([\sigma^z,\varphi_5]) = [\sigma^z,\varphi_5].\end{align*}
Since $u$ is odd, we see from \eqref{eqn:order} that $[\sigma^u,\varphi_{-1}]\varphi_{1+2^{e-1}}$ has order $4$. Moreover,
%Since $u,\, \frac{z}{2}$ are odd and $2^{e-2}$ exactly divides $S(5,2^{e-2})$ by \eqref{eqn:valuation2}, we have
%\[ ([\sigma^u,\varphi_{-1}]\varphi_{1+2^{e-1}})^2 = \sigma^{2^{e-1}u}  = \sigma^{zS(5,2^{e-2})}= [\sigma^z,\varphi_{5}]^{2^{e-2}}.\]
%This implies that $[\sigma^u,\varphi_{-1}]\varphi_{1+2^{e-1}}$ has order $4$, and that  the definition of $\Phi$ is consistent on 
%\[ (G\cap N) \cap \langle [\sigma^z,\varphi_5]\rangle = N \cap \langle [\sigma^z,\varphi_5]\rangle = \langle \sigma^{2^{e-1}}\rangle.\]
we compute that
\begin{align*}
\Phi([\sigma^u,\varphi_{-1}])\Phi(\sigma^{2^{e-2}})\Phi([\sigma^u,\varphi_{-1}])^{-1} %& = \varphi_{1+2^{e-1}}[\sigma^u,\varphi_{-1}]\varphi_{1+2^{e-1}}\varphi_{1+2^{e-1}}^{-1}\\
& = \varphi_{1+2^{e-1}}[\sigma^u,\varphi_{-1}]\\
& = ([\sigma^u,\varphi_{-1}]\varphi_{1+2^{e-1}})^{-1}\\
& = \Phi(\sigma^{2^{e-2}})^{-1}\\
&= \Phi([\sigma^u,\varphi_{-1}]\sigma^{2^{e-2}}[\sigma^u,\varphi_{-1}]^{-1}).
\end{align*}
Since $[\sigma^z,\varphi_5]$ commutes with both $\sigma^{2^{e-2}}$ and $\varphi_{1+2^{e-1}}$ by \eqref{eqn:commute}, and with $[\sigma^u,\varphi_{-1}]$ by our choice of $z$, the above is enough to conclude that $\Phi$ defines a homomorphism on $G$. Clearly $[\sigma^z,\varphi_5],[\sigma^u,\varphi_{-1}]\in \mathrm{Im}(\Phi)$, and we have
\[ \Phi([\sigma^z,\varphi_5]^{2^{e-3}} [\sigma^u,\varphi_{-1}]) = [\sigma^{zS(5,2^{e-3})},\varphi_{5^{2^{e-3}}}]\varphi_{1+2^{e-1}} = \sigma^{2^{e-2}x}\]
for some odd $x$ by \eqref{eqn:valuation2} because $v_2(z)=1$. This implies that $\Phi$ is in fact an automorphism of $G$, and it clearly sends $H$ to $G'$.
\end{enumerate}
The proof of the proposition is now complete.
\end{proof}

\section*{Acknowledgements}

We gratefully acknowledge the use of  \textsc{Magma} \cite{magma} in this research. Even though none of our proofs require computer calculations, some of the statements that we proved were discovered based on \textsc{Magma} computations.

\medskip

\noindent The first-named author has been supported by the following grants: 
\\\mbox{\hspace{1em}}Project OZR3762 of Vrije Universiteit
Brussel;
\\\mbox{\hspace{1em}}FWO Senior Research Project G004124N.
\\The second-named author recognises that a large part of this research was supported by her research funding provided by Ochanomizu University.

\Addresses

\end{document}